\definecolor{darkgreen}{rgb}{0.0625,0.64,0.0625}
\def\R{{\mathbb R}}
\theoremstyle{plain}
\newtheorem{thm}{Theorem}[section]
\newtheorem{lem}[thm]{Lemma}
\newtheorem{prop}[thm]{Proposition}
\newtheorem{cor}[thm]{Corollary}
\theoremstyle{definition}
\newtheorem{exmp}{Example}[section]
\def\ro#1{{\rm #1}}
\def\Bbb#1{{\mathbb#1}}
\def\R{\Bbb R}
\def\Rx{\R\mkern1mu^}
\def\Rn{\Rx n}
\def\AG#1{\ro{A}(#1)}  
\def\AGR#1{\AG{#1,\R}} 
\def\SA#1{\ro{SA}(#1)} 
\def\SAR#1{\SA{#1,\R}} 
\def\GL#1{\ro{GL}(#1)} 
\def\GLR#1{\GL{#1,\R}} 
\def\SL#1{\ro{SL}(#1)} 
\def\SLR#1{\SL{#1,\R}}  
\def\semidirect{\ltimes}
\numberwithin{equation}{section}
\newcommand\blfootnote[1]{%
 \begingroup
 \renewcommand\thefootnote{}\footnote{#1}%
 \addtocounter{footnote}{-1}%
 \endgroup
 }
\begin{document}

\title[Hypersurface flow in centro-affine geometry]{An eternal hypersurface flow arising in centro-affine geometry}
   	\author[X. J. Jiang]{Xinjie Jiang}
    \address{ Xinjie Jiang\newline\indent
     Department of Mathematics, Northeastern University, Shenyang, 110819, P.R. China}
    \email{serge0912@icloud.com}

     \author[C. Z. Qu]{Changzheng Qu}
    \address{Changzheng Qu\newline\indent
     School of Mathematics and Statistics, Ningbo University, Ningbo, 315211, P.R. China}
    \email{quchangzheng@nbu.edu.cn}

    \author[Y. Yang]{Yun Yang${}^*$}\blfootnote{${}^*$~Corresponding author: yangyun@mail.neu.edu.cn}
    \address{ Yun Yang\newline\indent
     Department of Mathematics, Northeastern University, Shenyang, 110819, P.R. China}
    \email[Corresponding author]{yangyun@mail.neu.edu.cn}

\begin{abstract}
    In this paper, the existence and uniqueness for a specific centro-affine invariant hypersurface
    flow in $\R^{n+1}$ are studied, and the corresponding evolutionary processes in both centro-affine and Euclidean settings are explored. It turns out that the flow exhibits similar properties as the standard heat flow. In addition, the long time existence of the flow is investigated, which asserts that the hypersurface governed by the flow  converges asymptotically  toward an ellipsoid via systematically investigating evolutions of the centro-affine invariants. Furthermore,  the classification of the eternal solutions for the flow is provided.

\end{abstract}

\subjclass[2010]{53A15, 53A55, 53E10, 35K51.}

\keywords{centro-affine geometry;\; invariant hypersurface flow;\; Monge-Amp\`ere type equation;\; asymptotic behaviour;\;  eternal solutions}


\maketitle
\section{Introduction}
In this paper, we consider an invariant hypersurface flow
\begin{equation}\label{mainflow}
	\frac{\partial X}{\partial t}=\left(-\frac{1}{2n}\log\psi+\lambda\right)X,\quad X(\cdot,0)=X_0
\end{equation}
in centro-affine geometry, where $ \psi $ is the Tchebychev function (see \eqref{Tfun} for a precise description of the Tchebychev function), $ \lambda $ is arbitrary constant, and $X_0$ is a smooth, closed, uniformly convex hypersurface in $\R^{n+1}$.

It has been known that geometric flows in various geometries have been studied extensively. For the invariant submanifold flow (cf. \cite{olv-isf}), a fundamental issue is to determine the induced evolution of the geometric invariants characterizing the submanifold $S$. Among all invariant submanifold flows, the easiest thing to think of is the group invariant geometric heat flow. As usual, group invariant geometric heat flow means that a geometric invariant $F$ on the submanifolds evolves according to the partial differential equation
$ \frac{\partial}{\partial t}F=\Delta_g F$ with initial condition $F(\cdot, 0)=F_0:\mathcal{M}\rightarrow \mathcal{N}$, where $\Delta_g$ is the Laplace-Beltrami operator on $(\mathcal{M},g)$, and $g$ denotes the group invariant metric on $\mathcal{M}$.
The curve shortening flow (CSF) \cite{gh}, the affine curve shortening flow (ACSF) \cite{ast,st}, the mean curvature flow (MCF) \cite{Huisken0}, the affine normal flow (ANF) \cite{and0} are commonly geometric heat flows.
The Ricci flow (RF) is the analogue of the heat equation on a Riemannian manifold \cite{HamiltonRF0}, which was applied successfully by Perelman to solve the Poincar\'e conjecture completely.
Generally, the geometric heat flows induce the second-order nonlinear parabolic equations. For example, CSF, ACSF, MCF, ANF and RF.
Interestingly, the geometric heat flow in centro-affine geometry leads to the well-known inviscid Burgers’ equation \cite{oqy}, and the geometric heat flow in general-affine geometry yields the fourth-order nonlinear parabolic equation.
In \cite{jyy, qy}, an invariant second-order nonlinear parabolic equation with respect to centro-affine transformation group in $\R^2$ was investigated, which has some analogous properties to the group-invariant heat flows.
It is worth noting that the flow \eqref{mainflow} generates the  second-order nonlinear parabolic equations with respect to centro-affine invariants (see Section \ref{sec-UniformE}), and also  possesses certain properties similar to the standard heat flow (see Section \ref{sec-ProofA}).

The study of affine differential geometry relies on  the Lie group $\AGR n = \GLR n \semidirect \Rn$ consisting of affine transformations $x \longmapsto Ax+b$, $A\in \GLR n$, $b\in \Rn$ acting on $x \in \Rn$
(see Nomizu and Sasaki \cite{ns} and Simon \cite{sim} for details), and correspondingly equi-affine geometry is restricted to the subgroup $\SAR n = \SLR n \semidirect \Rn$ of volume-preserving affine transformations. Centro-affine differential geometry refers to the subgroup of the affine transformation group that keeps the origin fixed,  which is closely related to the geometry induced
by the general linear group $x \longmapsto Ax$, $A\in \GLR n$, $x \in \Rn$.
Furthermore, centro-equi-affine differential geometry arises in connection with the subgroup $\SLR n$ of volume-preserving linear transformations. In the centro-affine differential geometry, the ambient
space $\R^{n+1}$ has a flat affine connection $D$ and the usual determinant function
 is regarded as a parallel volume element. Let $ \mathcal{M} $ be an $n$-dimensional smooth manifold and let $ X: \mathcal{M}\rightarrow\R^{n+1} $ be a hypersurface immersion such that $ X $ is transversal to $ X_*(T\mathcal{M}) $ at each point of $ \mathcal{M} $. Note that the Einstein summation convention is employed and the range of indices  is  $ 1\le i,j,k,\cdots\le n $.
 The centro-affine metric $g$ on $ T\mathcal{M} $ is defined through a symmetric bilinear form, that is
\begin{align*}
	g=-\frac{[X_*(E_1),X_*(E_2),\cdots,X_*(E_n),E_iE_j(X)]}{[X_*(E_1),X_*(E_2),\cdots,X_*(E_n),X]}\theta^i\otimes\theta^j=g_{ij}\theta^i\otimes\theta^j,
\end{align*}
where the bracket notation $ [\;\cdots\;] $ is used to denote the standard determinant in $ \R^{n+1} $, and $ \{E_i\} $ is a local basis for $ T\mathcal{M} $ with the dual basis $ \{\theta^i\} $ (see \cite{lls,lsz,wang} for more details).
For locally strongly convex hypersurfaces, this quadratic form is positive definite by appropriate choice of the orientation. From now on, we restrict to locally strongly convex hypersurfaces as in this case the centro-affine metric is a Riemannian metric.

The difference of the Levi-Civita connection $ \nabla $ of $ g $ and the induced connection $ \widehat{\nabla} $ is a $ (1, 2) $-tensor $ C $ on $ \mathcal{M} $ with the property that its associate cubic form $ C $, defined by
\begin{align*}
	C(u,v,w)=g(C(u,v),w), \quad u,v,w\in T\mathcal{M}
\end{align*}
is totally symmetric. The function $ J $ on $ \mathcal{M} $ given by
\begin{align*}
	J=\frac{1}{n(n-1)}|C|^2
\end{align*}
is called the Pick invariant, where $ | \cdot | $ denotes the norm of a tensor with respect to the centro-affine metric $ g $. The Tchebychev form and the Tchebychev vector field on $ \mathcal{M} $ are defined by $	T=\frac{1}{n}\text{trace}_g(C)$
and $g(T,v)=T(v), v\in T\mathcal{M}$, respectively. The centro-affine mean curvature is defined by
    \begin{equation*}
     H=\frac{1}{n}\mathrm{trace}_g(\nabla T)=\frac{1}{n}\mathrm{Div}~T,
    \end{equation*}
    where $\mathrm{Div}$ is the divergence operator on $\mathcal{M}$.
The positive function $\psi$ (called the Tchebychev function of $X$)
\begin{align}\label{Tfun}
\psi=\frac{\det(g_{ij})}{[X_*(E_1),X_*(E_2),\cdots,X_*(E_n),X]^2}
\end{align}
is independent of choice of the frame and is invariant under centro-equi-affine transformation (cf. \cite{lls,lsz}).
For the equi-affine support function $ \rho $ (Section 4.13 in \cite{sim}) and the Tchebychev vector filed of centro-affine geometry, the following equation holds \cite{lls}
\begin{align}\label{rel-support}
	T=-\frac{1}{2n}D\left(\log\psi\right)=\frac{n+2}{2n}D\left(\log\rho\right).
\end{align}
In particular, the relation $ T=0 $  characterizes proper affine spheres since $ \rho $ is a constant (cf. \cite{lls}).

With the centro-affine transformation $\bar{X}=AX$,  the flow \eqref{mainflow}
may be rewritten as
$$\displaystyle
\frac{\partial \bar{X}}{\partial t}=\left(-\frac{1}{2n}\log\bar{\psi}+\bar{\lambda}\right)\bar{X},\qquad \bar{X}(\cdot,0)=\bar{X}_0,
$$
where $A$ is a nonsingular $\displaystyle (n+1)\times(n+1)$ matrix, $\bar{\psi}$ is the Tchebychev function of $\bar{X}$ and $\displaystyle\bar{\lambda}=\lambda-\log(\det(A))^2/(2n)$
is still constant.
In addition, the constant $\lambda$ can be eliminated
by a time-rescaling $\tilde{X}=f(t)X$ with $\displaystyle f(t)=\exp\left(\frac{n\lambda}{n+1}\left(1-\exp (\frac{n+1}{n}t)\right)\right)$, that is,
$$\displaystyle
\frac{\partial \tilde{X}}{\partial t}=\left(-\frac{1}{2n}\log\tilde{\psi}\right)\tilde{X},\quad \tilde{X}(\cdot,0)=X_0,
$$
where $\tilde{\psi}$ is the Tchebychev function of $\tilde{X}$.
This indicates the flow \eqref{mainflow} is invariant under centro-affine transformation.
The existence and uniqueness are among the most fundamental and critical issues of the geometric flows. For this flow, the following theorem is one of main results.
\begin{thm}\label{Uin-Exi}
	Let $ X_0 $ be a smooth, closed, uniformly convex hypersurface in $ \R^{n+1} $ which encloses the origin. Then the flow \eqref{mainflow} has a unique smooth, uniformly convex solution $ X(\cdot,t) $ in $ [0,+\infty) $.
\end{thm}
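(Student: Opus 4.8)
\emph{Proof strategy.}
The plan is to recast \eqref{mainflow} as a single scalar fully nonlinear parabolic equation for the support function and then run the classical program of short-time existence, a priori estimates, and continuation. Since $X_0$ is smooth, uniformly convex and encloses the origin, its Gauss map is a diffeomorphism $\mathcal M\cong S^n$ and its support function $h_0\colon S^n\to(0,\infty)$ is smooth, positive and satisfies $W_0:=\bar\nabla^2 h_0+h_0\,\bar g>0$, where $\bar\nabla$, $\bar\nabla^2$ and $\bar g$ denote the gradient, Hessian and round metric on $S^n$. A short computation from \eqref{Tfun} gives the identity $\psi=K\,h^{-(n+2)}$, with $K$ the Euclidean Gauss curvature of the hypersurface; combined with the support-function formula $K=1/\det W$, $W=\bar\nabla^2 h+h\bar g$, and with the observation that the scalar velocity in \eqref{mainflow} produces normal speed $\big(-\tfrac1{2n}\log\psi+\lambda\big)h$ at the boundary point with outer normal $\xi$, this forces the support function of the flowing hypersurface to satisfy
\begin{equation}\label{redPDE}
\frac{\partial h}{\partial t}=\Big(\tfrac1{2n}\log\det\!\big(\bar\nabla^2 h+h\,\bar g\big)+\tfrac{n+2}{2n}\log h+\lambda\Big)h,\qquad h(\cdot,0)=h_0 .
\end{equation}
Conversely, any positive solution of \eqref{redPDE} with $\bar\nabla^2 h+h\bar g>0$ gives, via $X=h\,\xi+\bar\nabla h$, a uniformly convex solution of \eqref{mainflow} up to a tangential reparametrization of $S^n$ (which does not change the evolving hypersurface); so the two problems are equivalent, and by the time-rescaling in the remark preceding the theorem we may also take $\lambda=0$.

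\emph{Short-time existence.} Equation \eqref{redPDE} is of Monge--Amp\`ere type: the second-order coefficients of the linearization of its right-hand side are $\tfrac{h}{2n}(W^{-1})^{ij}$, which is positive definite near $h_0$ since $h_0>0$ and $W_0>0$, so \eqref{redPDE} is uniformly parabolic there. Standard theory for fully nonlinear parabolic equations on a compact manifold --- solvability in parabolic H\"older spaces through linearization and the inverse function theorem, or through Krylov--Safonov together with Schauder estimates --- yields a unique smooth solution on a maximal interval $[0,T_{\max})$, along which $h>0$ and $W>0$ persist for a short time by continuity.

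\emph{A priori estimates.} To conclude $T_{\max}=+\infty$ it suffices to bound, on each finite interval $[0,T]\subset[0,T_{\max})$, the quantity $h$ from above and below by positive constants, $|\bar\nabla h|$, and the eigenvalues of $W$ from above and below. The $C^0$ bounds come from the maximum principle applied to $h_{\max}(t)$ and $h_{\min}(t)$: at a spatial maximum $\bar\nabla^2 h\le0$, hence $\det W\le h^n$ and $\partial_t h_{\max}\le\big(\tfrac{n+1}{n}\log h_{\max}\big)h_{\max}$, while at a spatial minimum the reversed inequality holds; comparison with the ODE $y'=\tfrac{n+1}{n}(\log y)\,y$, whose solutions remain in $(0,\infty)$ for all time, gives $0<c(T)\le h\le C(T)$, so the flowing body neither blows up nor collapses to the origin in finite time. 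Since $|\bar\nabla h(\xi)|^2=|X(\xi)|^2-h(\xi)^2$ and $|X(\xi)|$ is bounded by the circumradius, the $C^1$ estimate follows from the $C^0$ one. The $C^2$ estimate --- a two-sided bound on the eigenvalues of $W$, equivalently on the principal radii of curvature --- is the main obstacle. The intended argument is the usual one: differentiate \eqref{redPDE} to get $\partial_t W_{ij}=\tfrac{h}{2n}(W^{-1})^{kl}\bar\nabla_k\bar\nabla_l W_{ij}+(\text{lower-order terms})$, apply the maximum principle to the largest eigenvalue of $W$ to obtain the upper bound (which is precisely the preservation of uniform convexity), and deduce the lower bound on $W$ --- i.e. an upper bound on the principal curvatures --- from \eqref{redPDE} itself, since it controls $\det W$ and all but one eigenvalue of $W$ are already pinched. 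The $C^0$, $C^1$ bounds and the concavity of $M\mapsto\log\det M$ keep all coefficients in these estimates under control.

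\emph{Conclusion.} With $h$ pinched and $W$ two-sidedly bounded, \eqref{redPDE} is uniformly parabolic with bounded, concave right-hand side on $S^n\times[0,T]$; the Evans--Krylov theorem then gives a uniform $C^{2,\alpha}$ bound and parabolic Schauder bootstrapping yields uniform $C^\infty$ bounds on $[0,T]$ for all $T<T_{\max}$, which rules out finite-time singularities and forces $T_{\max}=+\infty$. Uniqueness on $[0,+\infty)$ follows from the uniqueness in the short-time theorem and the continuation, or directly by the parabolic maximum principle: any solution of \eqref{mainflow} has support function solving \eqref{redPDE}, and the difference $w$ of two solutions of the (uniformly parabolic, by the a priori bounds) equation \eqref{redPDE} satisfies, on every common compact time interval, a linear equation $\partial_t w=a^{ij}\bar\nabla_i\bar\nabla_j w+b^i\bar\nabla_i w+c\,w$ with $w(\cdot,0)=0$, whence $w\equiv0$.
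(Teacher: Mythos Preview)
Your overall strategy is the paper's: reduce to the scalar Monge--Amp\`ere-type equation for the support function, then run $C^0$, $C^1$, $C^2$ estimates followed by Krylov--Safonov and Schauder bootstrapping. Your $C^0$ and $C^1$ arguments are essentially Lemmas~\ref{C0} and~\ref{C1}, and your uniqueness argument is the paper's.

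The genuine gap is in the $C^2$ estimate. You propose to (i) bound the largest eigenvalue of $W$ by differentiating the equation and applying the maximum principle, and then (ii) obtain the lower bound on $W$ ``from the equation itself, since it controls $\det W$''. Step~(ii) is unjustified: the reduced equation only gives $\log\det W=2n\,h_t/h-(n+2)\log h$, so a lower bound on $\det W$ is \emph{equivalent} to a lower bound on $h_t$, which you do not have; an upper bound on the eigenvalues of $W$ from (i) yields only $\det W\le L$, not $\det W\ge L^{-1}$, and the two pieces do not combine to pinch the smallest eigenvalue. The paper supplies exactly this missing ingredient as a separate lemma (Lemma~\ref{gauss}): the Tso-type barrier $\Psi=-h_t/(h-\tfrac12 r_0)$ satisfies a tractable parabolic inequality whose maximum-principle analysis yields $(\det W)^{-1/n}\le L\big(1+\log(\det W)^{-1/n}\big)$ and hence $\det W\ge L^{-1}$. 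Only with this Gauss-curvature bound in hand does the paper then bound the largest principal radius (Lemma~\ref{2destimate}), via a second auxiliary function $\Phi=\sqrt{h_{\xi\xi}}+|\bar\nabla h|^2+h^2$; the two lemmas together give the full two-sided estimate $\tfrac1L I\le W\le L I$. Without the Tso barrier argument neither the lower bound on $W$ nor the uniform parabolicity required for Evans--Krylov is available, so your sketch does not close.
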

Theorem \ref{Uin-Exi} will be proved in Section \ref{sec-ER} by an
approach similar to that used in \cite{Chou,LQR,jieu}, namely, by introducing the Euclidean support function of $X(\cdot,t)$ and reducing \eqref{mainflow} to a single parabolic equation of
Monge-Amp\`{e}re type on this support function.

	Over the past four decades, extensive research has been conducted on the asymptotic behavior of convex hypersurfaces in Euclidean and affine spaces based on invariant geometric flows. Prominent examples in Euclidean geometry include MCF and the Gauss curvature flow (GCF). 
	MCF was first investigated by Brakke \cite{Brakke}, in the context of geometric measure theory. In an intriguing paper \cite{Huisken0}, Huisken proved that,  for a smooth and uniformly convex initial hypersurface, MCF remains convex and contracts smoothly to a round point in finite time, ``round" meaning that after a suitable time-dependent rescaling the flow converges smoothly to a sphere. This property was extended to other geometric flows in which the speed is a homogeneous function of degree one with respect to the principal curvatures \cite{Andrew1,Chow1,Chow2}.
	GCF was introduced by Firey in his seminal paper \cite{Firey} as a model of the wearing process undergone by a pebble on a beach.
	In the special case of surfaces in $\R^3$, Andrews \cite{AndrewGCF} showed that the flow deforms a uniformly convex hypersurface into a round point. In the higher dimensional case, Tso \cite{TsoK} proved that the flow exists up to some maximal time, and as it approaches this maximum time, the volume enclosed by the hypersurface converges to zero. Recently, by combining a soliton convergence result \cite{AndrewsGuanNi,GuanNi} and a uniqueness result for the soliton \cite{BrendleGaCSF}, the asymptotic behavior of GCF in higher dimensions has been fully resolved. In fact, in \cite{AndrewsGuanNi,BrendleGaCSF}, they considered a more general flow where the speed is the power of the Gauss curvature, and GCF is one of its special cases.

	Unlike the spherical asymptotic behavior of flows in Euclidean space, invariant geometric flows in affine geometry often exhibit ellipsoidal asymptotic behavior due to the affine invariance of these flows. In \cite{st}, Sapiro and Tannenbaum studied ACSF, which is the equi-affine analogue of CSF, and demonstrated that a closed convex embedded curve evolving under this flow will shrink to an elliptical point. Andrews \cite{and0} later extended this conclusion to the higher dimensional case, namely, ANF. Additionally, because ACSF is not pointwise shortening the equi-affine length, in order to obtain a more natural equi-affine analogue of CSF, Andrews also studied an $L^2$ gradient flow \cite{and}, where the equi-affine length increases pointwise along this flow. He proved that the flow expands to infinity and approaches a homothetically expanding ellipse. For centro-equiaffine geometric flows, there are a series of achievements obtained by Ivaki and Stancu. See \cite{is,iva0,iva1} and references therein. Wo, Wang, and Qu \cite{wwq} studied the centro-equiaffine geometric heat flow, and they showed that the original symmetric solution of the flow shrinks to an elliptical point. The geometric heat flows in centro-affine geometry and more general Klein geometries were investigated in \cite{olv-isf,oqy,ost-1,yq}.

In this paper, by analysing the asymptotic behaviors of the cubic form and the Tchebychev vector field, we obtain the following theorem  (its proof will be provided in Section \ref{sec-ProofA}).
\begin{thm}\label{mainthmA}
	Assume $ X(\cdot,t) $ is a solution of the flow \eqref{mainflow} in a maximal interval $ [0,+\infty) $, where $ {X}_0 $ is a smooth, closed, uniformly convex hypersurface. Then the Tchebychev vector field  $ T $ of $X(\cdot,t)$ converges smoothly to zero as $ t\rightarrow +\infty $,
	that is, $ X(\cdot,t) $ converges to an ellipsoid centred at origin as $ t\rightarrow +\infty. $
\end{thm}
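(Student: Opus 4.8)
The plan is to combine uniform-in-time a priori estimates, which make the (normalized) trajectory precompact in $C^\infty$, with a monotonicity formula for the centro-affine volume whose time-derivative is exactly $-\tfrac n2\int_{\mathcal M}|T|_g^2\,d\mu_g$, and then to pass to the limit $t\to+\infty$. After the time-rescaling of the Introduction we may take $\lambda=0$, so that the speed is $\varphi=-\tfrac1{2n}\log\psi$ and, by \eqref{rel-support}, $\nabla\varphi=T$ (the gradient of the speed is literally the Tchebychev field). By Theorem \ref{Uin-Exi} the solution is smooth and uniformly convex on $[0,+\infty)$; the first task is to show that, after normalizing the enclosed volume, the support function lies between two positive constants, the centro-affine metric is pinched, and all higher derivatives are bounded uniformly in $t$ --- via the maximum principle together with parabolic Krylov--Safonov and Schauder estimates applied to the Monge--Amp\`ere-type equation of Theorem \ref{Uin-Exi}. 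Then every sequence $t_k\to+\infty$ has a subsequence along which $X(\cdot,t_k)$ converges in $C^\infty$ to a smooth, closed, uniformly convex limit hypersurface.

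For the monotonicity, one computes from $\partial_t X=\varphi X$, using the centro-affine structure equation $X_{ij}=\widehat\Gamma_{ij}^{k}X_k-g_{ij}X$ and differentiating the determinant brackets, that
\[
\partial_t g_{ij}=-\widehat\nabla_i\widehat\nabla_j\varphi=-\nabla_i\nabla_j\varphi-C_{ij}{}^{k}\nabla_k\varphi .
\]
Taking the $g$-trace and using $g^{ij}C_{ij}{}^{k}=nT^{k}$, $\nabla\varphi=T$ and $\Delta_g\varphi=\mathrm{Div}\,T=nH$, one gets $\partial_t\log\sqrt{\det g}=-\tfrac n2\bigl(H+|T|_g^2\bigr)$, so the centro-affine volume $\omega(t):=\int_{\mathcal M}d\mu_{g(t)}$ (the centro-affine surface area of $X(\cdot,t)$) satisfies
\[
\frac{d}{dt}\,\omega(t)=-\frac n2\int_{\mathcal M}|T|_g^2\,d\mu_g\le 0 ,
\]
since $\int_{\mathcal M}H\,d\mu_g=\tfrac1n\int_{\mathcal M}\mathrm{Div}\,T\,d\mu_g=0$ on the closed manifold $\mathcal M$. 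As $\omega$ is positive (and, by the centro-affine isoperimetric inequality, bounded below by its value on ellipsoids), it converges as $t\to+\infty$, whence $\int_0^{\infty}\!\!\int_{\mathcal M}|T|_g^2\,d\mu_g\,dt<\infty$.

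Now let $t\to+\infty$. The a priori bounds control $\tfrac{d}{dt}\int_{\mathcal M}|T|_g^2\,d\mu_g$ uniformly, so the nonnegative integrable function $t\mapsto\int_{\mathcal M}|T(\cdot,t)|_g^2\,d\mu_g$ tends to $0$; interpolating with the uniform higher-order bounds on $T$ upgrades this to $T(\cdot,t)\to0$ in $C^{k}(\mathcal M)$ for every $k$, i.e. $T\to0$ smoothly. Hence any subsequential $C^\infty$ limit $X_\infty$ has $T_\infty\equiv0$ and is therefore a closed proper affine sphere whose affine normals all pass through the origin; by the classification of ovaloid affine spheres (Blaschke for $n=2$, and in general the statement that compact affine spheres are ellipsoids) $X_\infty$ is an ellipsoid centred at the origin. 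This is the asserted convergence. If one insists on convergence to a single fixed ellipsoid, the equality case of the isoperimetric inequality shows that every subsequential limit is an origin-centred ellipsoid, and a \L{}ojasiewicz--Simon inequality for $\omega$ near its critical manifold --- the origin-centred ellipsoids --- then bounds $\int_0^{\infty}\|\partial_t X\|\,dt$ and pins down the limit.

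The main obstacle is the a priori estimates of the first step, not the monotonicity: the speed $-\tfrac1{2n}\log\psi$ is only logarithmic in the curvature, so the standard Gauss-curvature-flow machinery does not transfer directly, and the two-sided $C^0$ bound together with the curvature pinching must be extracted by exploiting the special structure $\nabla\varphi=T$. A secondary difficulty, should one want the stronger conclusion of convergence to one fixed ellipsoid, is ruling out drift along the non-compact critical family; the monotonicity identity itself is short once the evolution $\partial_t g_{ij}=-\widehat\nabla^2\varphi$ is in hand.
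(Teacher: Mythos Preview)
Your overall architecture --- monotonicity of the centro-affine area combined with the centro-affine isoperimetric inequality to get $\int_0^\infty\!\int|T|^2\,d\mu\,dt<\infty$, then uniform higher-order estimates to upgrade $\int|T|^2\,d\mu\to0$ to smooth convergence $T\to0$ --- is exactly the paper's. But your monotonicity computation has a sign error that, as written, breaks the argument. From $\widehat\Gamma_{ij}^k=\Gamma_{ij}^k+C_{ij}^k$ one has $\widehat\nabla_i\widehat\nabla_j\varphi=\nabla_i\nabla_j\varphi-C_{ij}{}^k\varphi_k$, hence
\[
\partial_t g_{ij}=-\widehat\nabla_i\widehat\nabla_j\varphi=-\nabla_i\nabla_j\varphi+C_{ij}{}^k\nabla_k\varphi,
\]
and after tracing and integrating (using $\nabla\varphi=T$, $\Delta\varphi=nH$, $\int H\,d\mu=0$) one gets $\dfrac{d}{dt}\,\omega(t)=+\dfrac{n}{2}\int_{\mathcal M}|T|^2\,d\mu_g$: the centro-affine area is \emph{increasing}, not decreasing. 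This is consistent with the centro-affine isoperimetric inequality, which says $\omega(t)\le O_n$ with equality precisely on origin-centred ellipsoids --- ellipsoids \emph{maximize} the centro-affine area, so your ``bounded below by its value on ellipsoids'' is backwards. With the corrected sign, ``increasing and bounded above by $O_n$'' gives convergence of $\omega(t)$ and hence $\int_0^\infty\!\int|T|^2<\infty$; from there the rest of your argument goes through as in the paper.

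The other divergence is in the a priori estimates. You propose normalizing the enclosed Euclidean volume and running Krylov--Safonov/Schauder on the Monge--Amp\`ere equation, but do not carry this out and flag it as the main obstacle. The paper avoids this entirely: it works directly with the centro-affine invariants, first bounding $|T|^2$ via the maximum principle on its parabolic evolution (which contains an absorbing $-|T|^4$ term), then $|C|^2$ similarly, and then $|\nabla^m C|^2$ inductively by a Bernstein-type trick (applying the maximum principle to $\tilde t\,|\nabla^{m+1}C|^2+N|\nabla^m C|^2$). No Euclidean normalization or support-function pinching is needed, and these bounds are automatically uniform on $[0,\infty)$ --- simpler and more robust than the route you sketch.
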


Ancient solutions play an essential role in the singularity analysis of geometric flows. In order to study the geometric behavior at singularities, one needs to classify such solutions. For embedded convex compact ancient solutions of CSF, Daskalopoulos, Hamilton, and Sesum \cite{dphrsn} verified that there are only two possibilities: the shrinking circles, which sweep the whole space $ \R^2 $, or Angenent ovals, which can decompose into two translating solutions of the flow. Later, Daskalopoulos, Sesum and  Angenent obtained some results on  ancient solutions of MCF  \cite{asdpsn,asdpsn1}. They verified that, under certain conditions, compact non-collapsed ancient solutions of the flow exhibit unique asymptotics as $ t\rightarrow -\infty$, which can be described by solutions constructed in \cite{WhiteB,HRHO}. In addition, Huisken and Sinestrari \cite{HuiskenSC} discussed  the conditions that ensure closed convex ancient solutions to MCF can only be shrinking spheres. For noncompact ancient solutions of MCF, Brendle and Choi \cite{BrendleCK1} provided a complete classification when certain constraints are imposed on the solutions. More specifically, they showed that any noncompact and complete ancient solutions of MCF, which are strictly convex, uniformly two-convex, and non-collapsed, must be a Bowl soliton, up to scaling and ambient isometries. For the classification of ancient solutions to RF, there has been an increasing amount of research, see \cite{Brendle5,Brendle2,Brendle3,Brendle4, Brendle1,dphrsn1} and the references therein.
	
	There are also some classification results about ancient solutions to affine geometric flows. In \cite{lt}, Loftin and Tsui asserted that the only compact convex ancient solutions to ANF in $\R^{n+1}, n\ge2$, are contracting ellipsoids. However, their approach can not be used to ACSF, as it relies on a fact that the cubic form vanishes implies only the hypersurface is a hyperquadric in higher dimensions. The case of the plane (ACSF) was addressed by Chen \cite{chen}, who proved that the only compact convex ancient solutions to ACSF are shrinking ellipses. Similar results were also obtained by Ivaki \cite{IvakiACSF} using a different method. Additionally, Ivaki classified ancient solutions of the planar $p$-centro-equiaffine normal flows \cite{IvakiCPANF} using a similar approach,  and demonstrated that the only compact, origin-symmetric, strictly convex ancient solutions of the flows are contracting origin-centered ellipses.
	Indeed we have the following result (its proof will be provided in Section \ref{sec-EternalSolutions}).
\begin{thm}\label{mainthmB}
	 The only smooth closed strictly convex eternal solutions of the flow  \eqref{mainflow} with assumption that $ |T|^2 $ is uniformly bounded on $ (-\infty,+\infty)  $ are  origin-centered ellipsoids.
\end{thm}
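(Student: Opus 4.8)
As in \cite{lt} and \cite{IvakiACSF,IvakiCPANF}, the strategy is to show that an eternal solution obeying the stated bound must satisfy $T(\cdot,t)\equiv0$ for every $t\in(-\infty,+\infty)$, and then to invoke the rigidity of closed proper affine spheres. By \eqref{rel-support} and the remark following it, $T=0$ is equivalent to $\psi$ being constant, i.e.\ to $X(\cdot,t)$ being a proper affine sphere, and a closed proper affine sphere in $\R^{n+1}$ is --- in the centro-affine normalization --- an ellipsoid centred at the origin (the classical argument applies a maximum principle to the Pick invariant to force the cubic form to vanish, whence the hypersurface is a compact quadric). One may also assume $\lambda=0$ throughout: the time-rescaling described after \eqref{Tfun} reduces \eqref{mainflow} to this case, keeps the solution eternal, and leaves $|T|^2$ unchanged, since under $X\mapsto cX$ with a positive scalar $c$ one has $g_{ij}\mapsto g_{ij}$ and $\log\psi\mapsto\log\psi-2(n+1)\log c$, so $T=-\tfrac1{2n}D(\log\psi)$ is unaffected.

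The engine is the monotone functional behind Theorem~\ref{mainthmA}. From the evolution equations of Sections~\ref{sec-UniformE}--\ref{sec-ProofA} --- notably the parabolic equation satisfied by $\log\psi$ --- one obtains a centro-affine invariant quantity $\mathcal E(t)$ (for instance the affine isoperimetric ratio, or a suitable entropy built from $\psi$; the same quantity that drives the convergence in Theorem~\ref{mainthmA}) which is non-decreasing along \eqref{mainflow}, whose derivative equals, up to a positive factor, the centro-affine Dirichlet energy $\int_{\mathcal M}|T|^2\,d\mu_{g(t)}$ --- so that $\tfrac{d}{dt}\mathcal E(t)\ge0$, with equality at a time $t$ precisely when $T(\cdot,t)\equiv0$ --- and which obeys the sharp inequality $\mathcal E(t)\le\mathcal E_0$, where $\mathcal E_0$ is the common value on all origin-centred ellipsoids, equality being attained only there (the affine isoperimetric inequality). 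Applying Theorem~\ref{mainthmA} to the restriction of the eternal solution to $[s,+\infty)$ shows that it converges to an origin-centred ellipsoid as $t\to+\infty$, hence $\mathcal E(t)\uparrow\mathcal E_0$; combined with monotonicity this gives $\mathcal E(t)\le\mathcal E_0$ for all $t$ and the existence of $m:=\lim_{t\to-\infty}\mathcal E(t)\le\mathcal E_0$.

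If $m=\mathcal E_0$ we are done: then $\mathcal E$ is constant, $\tfrac{d}{dt}\mathcal E\equiv0$, hence $\int_{\mathcal M}|T|^2\,d\mu_{g(t)}\equiv0$ and $T\equiv0$, so $X(\cdot,t)$ is an origin-centred ellipsoid for all $t$. To prove $m=\mathcal E_0$ one uses the hypothesis that $|T|^2$ is uniformly bounded on $(-\infty,+\infty)$. This bound gives $|\nabla\log\psi|_g=2n|T|\le C$, which, fed into the evolution equations and bootstrapped through the Monge-Amp\`ere reduction of Theorem~\ref{Uin-Exi} together with interior parabolic estimates, should yield that, after normalising $X(\cdot,t)$ by time-dependent centro-affine maps $A_t$ placing $A_tX(\cdot,t)$ in John position, the family $\{A_tX(\cdot,t):t\le0\}$ is precompact in $C^\infty$ with uniformly convex limits. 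Choosing $t_k\to-\infty$ with $A_{t_k}X(\cdot,t_k)$ converging smoothly, the rescaled flows $s\mapsto A_{t_k}X(\cdot,t_k+s)$ --- again solutions of \eqref{mainflow}, up to a uniformly bounded rescaling of $\lambda$ --- converge locally smoothly to an eternal solution $X_{-\infty}$. Since $\mathcal E$ is centro-affine invariant and continuous under this convergence, $\mathcal E(X_{-\infty}(\cdot,s))=\lim_k\mathcal E(t_k+s)=m$ for every $s$; thus $\mathcal E$ is constant along $X_{-\infty}$, which forces $T_{X_{-\infty}}\equiv0$, so $X_{-\infty}$ is an origin-centred ellipsoid and $m=\mathcal E(X_{-\infty})=\mathcal E_0$, as required.

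The main obstacle is the compactness step just used: turning the single a priori bound on $|T|^2$ into full uniform control of the normalised flow for all negative times. Concretely one must first rule out degeneration --- two-sided bounds on the support functions of the John-normalised hypersurfaces, so that neither their eccentricity nor (after fixing the enclosed volume) their size can escape --- and only then upgrade, via the parabolic Monge-Amp\`ere equation, to uniform bounds on all derivatives; the estimate $|T|^2=\tfrac1{4n^2}|\nabla\log\psi|_g^2\le C$ is precisely the quantitative input that ought to make this non-degeneracy work, but extracting it is the technical heart of the proof. Everything else is either already available in this paper (the monotonicity formula and the asymptotics of Theorem~\ref{mainthmA}) or classical (the classification of compact proper affine spheres). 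We remark that, should the evolution inequality for $|T|^2$ turn out to be of the clean form $\partial_t|T|^2\le a\,\Delta_g|T|^2-c\,|T|^4$ for positive constants $a,c$, the conclusion $T\equiv0$ would instead follow at once from an ODE comparison for $\sup_{\mathcal M}|T|^2(\cdot,t)$ over the eternal time interval, bypassing the compactness argument entirely.
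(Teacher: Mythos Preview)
Your overall architecture --- a monotone functional $\mathcal E$ (the centro-affine area), the sharp inequality $\mathcal E\le O_n$, and the squeeze $O_n\ge\mathcal E(t)\ge\lim_{t\to-\infty}\mathcal E(t)=O_n$ --- is exactly the paper's. The detour through Theorem~\ref{mainthmA} to get $\mathcal E(t)\uparrow\mathcal E_0$ as $t\to+\infty$ is unnecessary (the isoperimetric inequality already gives $\mathcal E\le\mathcal E_0$), but harmless.

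The genuine divergence, and the gap in your proposal, is the compactness step that forces $m=\mathcal E_0$. You propose to John-normalise, feed the bound $|\nabla\log\psi|_g\le C$ into the Monge--Amp\`ere reduction, and extract a smooth backward limit; you correctly flag this as the ``main obstacle'' and do not carry it out. The paper bypasses this extrinsic machinery entirely. The point you are missing is that the differential inequality for $|C|^2$ derived in Section~\ref{sec-UniformE},
\[
\frac{d}{dt}\max_{\mathcal M_t}|C|^2 \;\le\; -\frac{1}{n^2}\bigl(\max_{\mathcal M_t}|C|^2\bigr)^2 + 6n\,|T|^2,
\]
has exactly the ``clean'' structure you hoped for in $|T|^2$: on an eternal solution with $|T|^2$ uniformly bounded, letting the initial time go to $-\infty$ in the ODE comparison gives a uniform bound on $|C|^2$ for all $t\in(-\infty,+\infty)$. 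From there the bootstrap of Section~\ref{hdc} yields uniform bounds on every $|\nabla^mC|^2$, hence on every space-time derivative of $|T|^2$; together with $\int_{-\infty}^{+\infty}\!\int_{\mathcal M_t}|T|^2\,d\mu_t\,dt<\infty$ (from monotonicity of $\mathcal E$ and $\mathcal E\le O_n$) this forces $T\to0$ smoothly as $t\to-\infty$, giving $m=O_n$ directly. No John position, no two-sided support-function bounds, no parabolic Monge--Amp\`ere estimates are needed.

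Finally, your hoped-for shortcut via $\partial_t|T|^2\le a\Delta_g|T|^2-c|T|^4$ does not materialise: the actual inequality \eqref{Tinequality} carries a $+\tfrac{n+3}{n}|T|^2$ term, so the ODE comparison yields only the uniform bound of Lemma~\ref{Tbound}, not $T\equiv0$. The good quartic inequality lives one level up, at $|C|^2$; that is where the eternal-solution argument actually bites.
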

This paper is organized as follows. In Section \ref{sec-Basic}, local expressions of centro-affine geometric quantities are given, and the centro-affine invariant hypersurface flow \eqref{flow}, together with evolution equations for some geometric quantities, are presented. In Section \ref{sec-ER}, we consider the flow in Euclidean setting and reduce it to a scalar parabolic equation of Monge-Amp\`ere type, via the support function. We establish the a priori estimates, which imply the long time existence and regularity of the flow \eqref{euclidean}. Some results about this flow in Euclidean setting are obtained in Section \ref{sec-Euclidean Results}. Uniform bounded estimates for some centro-affine invariants are derived in Section \ref{sec-UniformE}. Section \ref{sec-ProofA} is dedicated to the proof of the Theorem \ref{mainthmA}. The classification of eternal solutions for the flow is provided in Section \ref{sec-EternalSolutions} by demonstrating that the backward limit of these solutions can only be ellipsoids.

\section{Evolution equations of the hypersurface flow}\label{sec-Basic}
We now utilize the natural basis, allowing us to represent the Gauss equation of $ X $ in centro-affine geometry as follows
\begin{align*}
	\frac{\partial^2X}{\partial u^i\partial u^j}=\widehat{\Gamma}_{ij}^k\frac{\partial X}{\partial u^k}-g_{ij}X,
\end{align*}
where $  \widehat{\Gamma}_{ij}^k $ are the components of the induced connection $ \widehat{\nabla} $. Let $ \Gamma_{ij}^k $ be the Christoffel symbols of the centro-affine metric $ g_{ij} $. Thus, we have the following notations:
\begin{align*}
	&C_{ij}^k=C_{ji}^k=\widehat{\Gamma}_{ij}^k-\Gamma_{ij}^k, \qquad C_{ijk}=C_{ij}^lg_{lk},\\
	&T^k=\frac{1}{n}g^{ij}C_{ij}^k, \qquad T_i=T^kg_{ik}=\frac{1}{n}C_{ki}^k,\\
	&J=\frac{1}{n(n-1)}g^{ij}C_{im}^lC_{lj}^m,
\end{align*}
where $ g^{ij} $ is the inverse matrix of $ g_{ij} $. Unless otherwise noted, we raise and lower indices using the centro-affine metric $ g_{ij} $ and $ g^{ij} $.
In particular, the Riemannian curvature tensor is given by
\begin{align*}
	R_{ijk}^l=\frac{\partial\Gamma_{jk}^l}{\partial u^i}-\frac{\partial\Gamma_{ik}^l}{\partial u^j}+\Gamma_{jk}^p\Gamma_{pi}^l-\Gamma_{ik}^p\Gamma_{jp}^l, \qquad R_{ijkm}=R_{ijk}^lg_{lm},
\end{align*}
and the normalized scalar curvature $\chi$ with respect to the centro-affine metric $ g $ can be expressed by
\begin{align*}
	&\chi=J-\frac{n}{n-1}|T|^2+1.
\end{align*}
The following symmetries are obtained in \cite{wang}
\begin{align*}
	&C_{ijk;l}=C_{ijl;k}, \qquad T_{i;j}=T_{j;i},
\end{align*}
where ``;" denotes the covariant derivatives with respect to the centro-affine metric $ g $.

Assume $X_0$ is a smooth, closed, uniformly convex hypersurface in $\R^{n+1}$. Then let us investigate the following geometric flow
\begin{eqnarray}\label{flow}
\begin{aligned}
	&\frac{\partial X}{\partial t}=\frac{1}{2}T^kX_k+\left(-\frac{1}{2n}\log\psi+\lambda\right)X,\\
   & X(\cdot,0)=X_0,
\end{aligned}
\end{eqnarray}
which is, up to a tangential diffeomorphism, equivalent to the flow \eqref{mainflow}.
From \eqref{rel-support}, it is readily apparent that the proper affine hypersphere is the centro-affine stationary solution of the flow \eqref{flow}.

Next, we employ the general formula in \cite{yq} to identify  the evolution equations for some geometric quantities under the flow \eqref{flow}. For the reader's convenience, some calculation details will be presented.
\begin{prop}\label{evo}
	Assume that the hypersurface $ X(\cdot,t) $ is evolving under the flow \eqref{flow}. Then the following evolution equations hold:
	\begin{align*}
		&(1)\quad\frac{\partial}{\partial t}g_{ij}=T_pC^p_{ij},\\
		&(2)\quad\frac{\partial}{\partial t}g^{iq}=-T_pC^{piq},\\			
		&(3)\quad\frac{\partial}{\partial t}C_{ij}^k=\frac{1}{2}(T_{;ij}^k+T^k_{;ji}-g^{kl}T_{i;jl})+T_i\delta_j^k+T_j\delta_i^k,\\
		&(4)\quad\frac{\partial}{\partial t}C_{ijk}=\frac{1}{2}T_{i;jk}+\frac{1}{2}T_lC_{ik}^pC_{pj}^l+\frac{1}{2}T_lC_{jk}^pC_{ip}^l+\frac{1}{2}g_{ik}T_j+\frac{1}{2}g_{jk}T_i+g_{ij}T_k,\\
		&(5)\quad\frac{\partial}{\partial t}T_i=\left(1+\frac{1}{n}\right)T_i+\frac{1}{2}H_i.
	\end{align*}
\end{prop}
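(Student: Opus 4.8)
The plan is to derive the five identities by differentiating the defining relations of the centro-affine quantities, using how the moving frame $\{X_1,\dots,X_n,X\}$, $X_i=\partial X/\partial u^i$, is transported by the flow. Write \eqref{flow} as $\partial_t X=\xi^kX_k+\varphi X$ with $\xi^k=\tfrac12T^k$ and $\varphi=-\tfrac1{2n}\log\psi+\lambda$; the only structural fact about the speed we use is $\varphi_{;i}=T_i$, which is exactly \eqref{rel-support} ($\lambda$ being irrelevant under differentiation). Differentiating $X_i$ in $t$ and substituting the Gauss equation $X_{ij}=\widehat\Gamma_{ij}^kX_k-g_{ij}X$ gives the decomposition
\[
\frac{\partial X_i}{\partial t}=a_i^kX_k+b_iX,\qquad a_i^k=\widehat\nabla_i\xi^k+\varphi\delta_i^k,\qquad b_i=\varphi_{;i}-\xi^kg_{ki}=\tfrac12T_i.
\]
Equivalently, one may invoke the general variation formulas of \cite{yq} for a flow of this form and then specialize $\xi^k=\tfrac12T^k$, $\varphi_{;i}=T_i$.

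To get (1) and (2): differentiating the Gauss equation in $t$ — equivalently, computing $\partial_t X_{ij}=\partial_i\partial_j(\partial_t X)$ in two ways — and isolating the transversal ($X$) component yields $\partial_t g_{ij}=-\widehat\nabla_ib_j-\varphi g_{ij}+a_j^kg_{ik}$. Writing the induced connection as $\widehat\nabla=\nabla+C$ (recall $C_{ij}^k=\widehat\Gamma_{ij}^k-\Gamma_{ij}^k$, and that $\widehat\nabla$ is not metric), the $\varphi$-terms drop out, the non-metricity of $\widehat\nabla$ contributes $T_pC_{ij}^p$, and the remaining part equals $\tfrac12(T_{i;j}-T_{j;i})$, which vanishes by the stated symmetry $T_{i;j}=T_{j;i}$; this is (1), and (2) follows from $\partial_t g^{iq}=-g^{ia}g^{qb}\,\partial_t g_{ab}$ on raising indices and using the full symmetry of the cubic form. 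For (3): isolating instead the tangential ($X_k$) component gives $\partial_t\widehat\Gamma_{ij}^k=\widehat\nabla_ia_j^k+b_j\delta_i^k+g_{ij}\xi^k$; since $a_j^k=\tfrac12\widehat\nabla_jT^k+\varphi\delta_j^k$ and $\widehat\nabla_i\varphi=T_i$, the zero-order terms $T_i\delta_j^k$ and $\tfrac12T_j\delta_i^k$ are already visible. Subtracting the standard variation formula $\partial_t\Gamma_{ij}^k=\tfrac12g^{kl}(\nabla_i\dot g_{jl}+\nabla_j\dot g_{il}-\nabla_l\dot g_{ij})$ with $\dot g_{ij}=T_pC_{ij}^p$, converting $\widehat\nabla$ back to $\nabla$, and simplifying with the Codazzi symmetry $C_{ijk;l}=C_{ijl;k}$ and its contraction $\nabla_kC_{il}{}^k=nT_{i;l}$, the third-order terms reorganize into $\tfrac12(T^k_{;ij}+T^k_{;ji}-g^{kl}T_{i;jl})$ and the zero-order remainder into $T_i\delta_j^k+T_j\delta_i^k$, which is (3).

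For (4): differentiate $C_{ijk}=C_{ij}^lg_{lk}$ and insert (3) and (1):
\[
\frac{\partial C_{ijk}}{\partial t}=\frac12\bigl(T_{k;ij}+T_{k;ji}-T_{i;jk}\bigr)+T_ig_{jk}+T_jg_{ik}+T_pC_{ij}^lC_{lk}^p.
\]
Commuting and re-symmetrizing the third covariant derivatives of $T$ costs Riemann-curvature terms, and the centro-affine Gauss equation expresses $R_{ijkl}$ through $g$ and the cubic form $C$: the part quadratic in $C$ produces $\tfrac12T_lC_{ik}^pC_{pj}^l+\tfrac12T_lC_{jk}^pC_{ip}^l$, while the purely metric part produces the extra $\tfrac12g_{ik}T_j+\tfrac12g_{jk}T_i+g_{ij}T_k$; combining these with the terms already present yields (4). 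For (5): write $T_i=\tfrac1nC_{ki}^k=\tfrac1n(\widehat\Gamma_{ki}^k-\Gamma_{ki}^k)$. From $\partial_t\Gamma_{ki}^k=\partial_i\bigl(\tfrac12g^{kl}\dot g_{kl}\bigr)=\tfrac n2\nabla_i|T|^2$ (using $g^{kl}C_{kl}^p=nT^p$) and the $k$-trace of the identity for $\partial_t\widehat\Gamma_{ij}^k$, namely $\partial_t\widehat\Gamma_{ki}^k=\tfrac12\widehat\nabla_k\widehat\nabla_iT^k+\tfrac{n+3}{2}T_i$, one gets
\[
\frac{\partial T_i}{\partial t}=\frac1n\Bigl(\frac12\widehat\nabla_k\widehat\nabla_iT^k+\frac{n+3}{2}T_i-\frac n2\nabla_i|T|^2\Bigr).
\]
It then remains to reduce $\widehat\nabla_k\widehat\nabla_iT^k$: passing to $\nabla$, the term $\tfrac n2\nabla_i|T|^2$ is absorbed, the Ricci commutation $\nabla_k\nabla_iT^k=\nabla_i(\nabla_kT^k)+\mathrm{Ric}_{il}T^l=nH_i+\mathrm{Ric}_{il}T^l$ supplies $H_i$, and inserting the centro-affine expression for $\mathrm{Ric}$ — with the sign dictated by $\chi=J-\tfrac n{n-1}|T|^2+1$ — makes every remaining cubic-form contraction cancel in pairs, leaving the zero-order terms $(1+\tfrac1n)T_i$; this proves (5).

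The routine but delicate work is concentrated in (4) and (5): one must carry every contraction of the cubic form while switching between the non-metric induced connection $\widehat\nabla$ and the Levi-Civita connection $\nabla$, commute covariant derivatives on $T$ correctly, and use the centro-affine Gauss equation with the sign convention fixed by the stated value of the normalized scalar curvature $\chi$. The crucial point to check is that all cubic-form terms in the computation of $\partial_t T_i$ cancel, so that the right-hand side collapses to the clean expression $(1+\tfrac1n)T_i+\tfrac12H_i$.
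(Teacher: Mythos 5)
Your proposal is correct, and it arrives at all five formulas by what is at heart the paper's computation, but made self-contained. Where the paper simply quotes the general variational formulas of \cite{yq} for (1), (2), (5) and for the starting expression in its computation of (3), you rederive everything from the transport of the centro-affine frame: writing the flow as $\partial_t X=\xi^kX_k+\varphi X$ with $\xi^k=\tfrac12T^k$ and $\varphi_{;i}=T_i$ (this is \eqref{rel-support}), differentiating the Gauss equation in $t$, and reading off the transversal and tangential components. Your key intermediate identities check out: $b_i=\tfrac12T_i$; $\partial_t g_{ij}$ reduces to $T_pC^p_{ij}$ after the $\varphi$-terms cancel and $T_{i;j}=T_{j;i}$ kills the antisymmetric part; $\partial_t\widehat\Gamma_{ij}^k=\tfrac12\widehat\nabla_j\widehat\nabla_iT^k+T_j\delta_i^k+\tfrac12T_i\delta_j^k+\tfrac12g_{ij}T^k$, whose trace is indeed $\tfrac12\widehat\nabla_k\widehat\nabla_iT^k+\tfrac{n+3}{2}T_i$; and in (5) the cubic-form contractions do cancel against the Ricci term obtained from the Gauss equation (with sign consistent with $\chi=J-\tfrac{n}{n-1}|T|^2+1$), leaving exactly $(1+\tfrac1n)T_i+\tfrac12H_i$. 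Your write-up also makes explicit a point the paper compresses: the final step of (4) (and the passage to the symmetric form in (3)) is not just ``symmetries of the cubic form'' but the commutation $T_{k;ij}-T_{i;jk}=-R^\alpha_{jki}T_\alpha$ together with the Gauss equation $R^\alpha_{jki}=C_{ij}^pC_{pk}^\alpha-C_{ik}^pC_{jp}^\alpha+g_{ik}\delta_j^\alpha-g_{ij}\delta_k^\alpha$, exactly as you indicate. The trade-off: the paper is shorter by citation, while your route is independent of \cite{yq} and records precisely where $T_{i;j}=T_{j;i}$, the Codazzi symmetry $C_{ijk;l}=C_{ijl;k}$ with its trace $\nabla_kC_{il}{}^k=nT_{i;l}$, and the Gauss equation enter. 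One phrasing to tighten in (4): the curvature terms contribute $-\tfrac12g_{ik}T_j-\tfrac12g_{jk}T_i+g_{ij}T_k$ and $-T_lC_{ij}^pC_{pk}^l+\tfrac12T_lC_{ik}^pC_{pj}^l+\tfrac12T_lC_{jk}^pC_{ip}^l$, which must be merged with the terms $g_{ik}T_j+g_{jk}T_i+T_pC_{ij}^lC_{lk}^p$ already present to give the stated coefficients, rather than being read as the final coefficients themselves.
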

\begin{proof}
	Using the general formula from \cite{yq}, it is straightforward to derive the first two equations and the last one. For the third equation, we compute
	\begin{align*}
		\frac{\partial}{\partial t}C_{ij}^k=&\frac{1}{2}(T_{;j}^lC_{li}^k+T_{;i}^lC_{lj}^k)+(T_j\delta_i^k+T_i\delta_j^k)+\frac{1}{2}T^lC_{li;j}^k-\frac{1}{2}T_{;l}^kC_{ij}^l\\
		&-\frac{g^{kl}}{2}(T_pC_{ij;l}^p-T_{l;ij}-T_{j;li}+T_{i;jl}+T_{p;j}C_{li}^p+T_{p;i}C_{jl}^p-T_{p;l}C_{ij}^p)\\
		=&\frac{1}{2}(T_{;j}^lC_{li}^k+T_{;i}^lC_{lj}^k)+(T_j\delta_i^k+T_i\delta_j^k)+\frac{1}{2}T^lC_{li;j}^k-\frac{1}{2}T_{;l}^kC_{ij}^l\\
		&-\frac{1}{2}(T_pC_{i;j}^{pk}-T^k_{;ij}-T^k_{;ji}+g^{kl}T_{i;jl}+T_{p;j}C_{i}^{kp}+T_{p;i}C_j^{pk}-T^k_{;p}C_{ij}^p)\\
		=&\frac{1}{2}(T_{;ij}^k+T^k_{;ji}-g^{kl}T_{i;jl})+T_i\delta_j^k+T_j\delta_i^k.
	\end{align*}
Then
\begin{align*}
	\frac{\partial}{\partial t}C_{ijk}=&\;\frac{\partial}{\partial t}(C_{ij}^pg_{pk})=g_{pk}\left(\frac{\partial}{\partial t}C_{ij}^p\right)+C_{ij}^p\left(\frac{\partial}{\partial t}g_{pk}\right)\\
	=&\;g_{pk}\left[\frac{1}{2}(T_{;ij}^p+T^p_{;ji}-g^{pl}T_{i;jl})+T_i\delta_j^p+T_j\delta_i^p\right]+C_{ij}^pT_lC_{pk}^l\\
	=&\;\frac{1}{2}(T_{k;ij}+T_{k;ji}-T_{i;jk})+T_lC_{ij}^pC_{pk}^l+g_{ik}T_j+g_{jk}T_i\\
	=&\;\frac{1}{2}T_{i;jk}+\frac{1}{2}T_lC_{ik}^pC_{pj}^l+\frac{1}{2}T_lC_{jk}^pC_{ip}^l+\frac{1}{2}g_{ik}T_j+\frac{1}{2}g_{jk}T_i+g_{ij}T_k,
\end{align*}
where the last step follows from the symmetries of the cubic form $C$.
\end{proof}
\section{Existence and regularity results}\label{sec-ER}
 In this section, we discuss the existence and smoothness of solutions to the flow \eqref{mainflow}, the Euclidean support function will be employed to reduce this flow to  a parabolic equation of Monge-Amp\`{e}re type. Let us briefly review some basic facts about the Euclidean support function. Details can be found in \cite{jieu}.
\subsection{The Euclidean support function}
Let $ \mathcal{M} $ be a smooth, closed, uniformly convex hypersurface in $ \R^{n+1} $.
Suppose that $ \mathcal{M} $ is given as an embedding of $ \mathbb{S}^n $ via the inverse Gauss map $ X: \mathbb{S}^n\rightarrow \mathcal{M}\subset\R^{n+1} $. Without loss of generality, we may assume that $ \mathcal{M} $ encloses the origin. The support function $ s $ of $ \mathcal{M} $, which gives the perpendicular distance from
the origin to the tangent plane at $ X(x) $, is defined by
\begin{align*}
	s(x)=\left<x,X(x)\right>
\end{align*}
for each $ x $ in $ \mathbb{S}^n, $
where $ \left<\cdot,\cdot\right> $ denotes the standard inner product of $ \R^{n+1} $. We extend $ s $ to be a homogeneous function of degrees one on $ \R^{n+1}\backslash\{0\} $
\begin{align*}
	s(x)=\sup\limits_{y\in \mathcal{M}}\left<x,y\right>
\end{align*}
for all $ x\in \R^{n+1} $.
 $ s $ is convex because it is a supremum of linear functions.
 Moreover, $s$ is smooth due to the smoothness of $X$.

Conversely, if $ s $ is a convex function which is smooth and homogeneous of degree one on $ \R^{n+1}\backslash\{0\} $, then it can be shown that $ s $ is the support function of a unique convex hypersurface $ \mathcal{M}=\partial\Omega $, where $ \Omega $ is the convex body
\begin{align*}
	\Omega=\bigcap\limits_{x\in \mathbb{S}^n}\{y\in\R^{n+1}: \left<x,y\right>\le s(x)\}.
\end{align*}
Alternatively, the support function $ s $ can be used to define a canonical embedding $ \bar{X} $ of $ \mathbb{S}^n $ with image equal to $ \mathcal{M} $ (see e.g. \cite{TsoK}):
\begin{align}\label{embedding}
	\bar{X}(x)=Ds(x)=s(x)x+\bar{\nabla}s(x),
\end{align}
where $ D=(D_1,\cdots ,D_{n+1}) $ is the gradient on $\mathbb{R}^{n+1}$ and $ \bar{\nabla} $ is the Levi-Civita connection on  $ \mathbb{S}^n $ coming from the standard metric $ \bar{g} $.
This has the property that the outward normal to $ \mathcal{M} $ at the point $ \bar{X}(x) $ is equal to $ x $, for each $ x\in\mathbb{S}^n $. In fact, $ \bar{X} $ is nothing but the inverse of the Gauss map.

Geometric quantities of $ \mathcal{M} $ can now be expressed in terms of $ s $. Through a direct computation, one can determine that the principal radii of curvature of $ \mathcal{M} $ at $ X(x)  $ are precisely the eigenvalues of the matrix $ (b_{ij})=(\bar{\nabla}^2_{ij}s+s\delta_{ij}) $, where $ \{e_1,\cdots,e_{n} \}$ is an orthonormal frame fields on $ \mathbb{S}^n $, $ \bar{\nabla} $ is the covariant derivative with respect to $ e_i $. Notably, the Gauss curvature at $ X(x)$ is given by
\begin{align*}
	K=\frac 1{\det(\bar{\nabla}^2s+sI)}.
\end{align*}
Note that $ s $ is considered as a homogeneous function over $ \R^{n+1}\backslash\{0\} $, the principal radii of curvature of $ \mathcal{M} $ are also equal to the non-zero eigenvalues of the Hessian matrix $ (D^2_{ij}s)_{i,j=1,\cdots,n+1} $.
\subsection{The flow in Euclidean setting}
By \eqref{rel-support} and an appropriate time-rescaling, the flow \eqref{mainflow}
is equivalent to
\begin{equation}\label{mainflow1}
	\frac{\partial X}{\partial t}=\left(\frac{n+2}{2n}\log\rho\right)X,\qquad X(\cdot,0)=X_0.
\end{equation}
   Note that the Euclidean support function and the equi-affine support function satisfy $ \rho=sK^{-\frac{1}{n+2}} $ (see e.g. \cite{ns,zgx}), where $ K $ is the Gauss curvature. Thus this flow in Euclidean setting may be rewritten as
\begin{equation}\label{euclidean}
	\frac{\partial X}{\partial t}=\frac{s}{2n}\log\left(\frac{s^{n+2}}{K}\right)x, \qquad X(\cdot,0)=X_0.
\end{equation}
In fact, the initial value problem \eqref{euclidean} can be transformed into an initial value problem concerning the support function $s$ (cf. \cite{jieu}):
\begin{equation}\label{sevo}
	\left\{
	\begin{aligned}
		&\frac{\partial s}{\partial t}=\frac{s}{2n}\log(s^{n+2}\det(\bar{\nabla}^2 s+sI)),\\
		&s(x,0)=s_0(x),
	\end{aligned}
\right.
\end{equation}
where $ x\in \mathbb{S}^n $ and $ s_0 $ is the support function for $ X_0$.
\subsection{A priori estimates}
In this subsection we will always assume that $ s\in C^{\infty}(\mathbb{S}^n\times[0,T]) $ is a positive and uniformly convex solution to \eqref{sevo}.
Let $  R(t) $ and $ r(t) $ be the outer and inner radii of the hypersurface $\mathcal{M}_t=X(\mathbb{S}^n,t) $ determined by $ s(\cdot,t) $ respectively. Suppose that
\begin{equation*}
	{	R_0=\sup\{R(t): t\in [0,T]\}}
\end{equation*}
and
\begin{equation}\label{r_0}
	{	r_0=\inf\{r(t): t\in [0,T]\}.}
\end{equation}
Our goal is to estimate the principal radii of curvatures of $ \mathcal{M}_t $ from below and above in terms of $ r_0, R_0 $, and initial data. We first establish the following time-dependent $ C^0 $ estimate for the solution to the flow \eqref{sevo}.
\begin{lem}\label{C0}
	Let $ s(\cdot,t) $, $ t\in [0,T] $, be a solution to the flow \eqref{sevo}. Then
	\begin{align*}
		s(\cdot,t)\le R_0\le \max\{(\max_{\mathbb{S}^n}s_0)^{\exp(\frac{n+1}{n}T)},1\}
	\end{align*}
	and
	\begin{align*}
		s(\cdot,t)\ge r_0\ge \min\{(\min_{\mathbb{S}^n}s_0)^{\exp(\frac{n+1}{n}T)},1\}.
	\end{align*}
\end{lem}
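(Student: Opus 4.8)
The plan is to prove both inequalities by running the parabolic maximum principle directly on the scalar equation \eqref{sevo}, exploiting that at a spatial extremum of $s(\cdot,t)$ the factor $\det(\bar{\nabla}^2 s+sI)$ is pinched against $s^n$. Throughout, $R(t)$ and $r(t)$ denote the maximum and minimum of $s(\cdot,t)$ over $\mathbb{S}^n$, so that $r(t)\le s(x,t)\le R(t)$ for every $x\in\mathbb{S}^n$; hence it is enough to bound $R(t)$ from above and $r(t)$ from below on $[0,T]$, after which the bounds for $R_0=\sup_{[0,T]}R(t)$, for $r_0=\inf_{[0,T]}r(t)$, and for $s(\cdot,t)$ itself, all follow at once.

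First I would fix $t$ and a point $x_t\in\mathbb{S}^n$ with $s(x_t,t)=R(t)$. By Hamilton's trick (or a direct Lipschitz estimate from \eqref{sevo}), $t\mapsto R(t)$ is Lipschitz and, for a.e.\ $t$, $R'(t)=\partial_t s(x_t,t)$. At $x_t$ one has $\bar{\nabla}s(x_t,t)=0$ and $\bar{\nabla}^2 s(x_t,t)\le 0$, so the eigenvalues of $b=\bar{\nabla}^2 s+sI$ there all lie in $(0,R(t)]$ — they are positive by uniform convexity — and therefore $0<\det b(x_t,t)\le R(t)^n$. Substituting this into \eqref{sevo} gives
$$R'(t)\le \frac{R(t)}{2n}\log\!\big(R(t)^{n+2}R(t)^{n}\big)=\frac{n+1}{n}\,R(t)\log R(t),$$
that is $\frac{d}{dt}\log R(t)\le\frac{n+1}{n}\log R(t)$ for a.e.\ $t$. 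Comparing with the linear ODE (multiply by $\exp(-\frac{n+1}{n}t)$ and integrate) yields $\log R(t)\le \exp(\frac{n+1}{n}t)\,\log R(0)$, with $R(0)=\max_{\mathbb{S}^n}s_0$.

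A short case analysis on the sign of $\log R(0)$ then gives the stated form. If $\max_{\mathbb{S}^n}s_0\ge 1$, then $\log R(0)\ge 0$, the right-hand side is nondecreasing in $t$, and $\log R(t)\le\exp(\frac{n+1}{n}T)\log R(0)$ for $t\in[0,T]$, so $R(t)\le(\max_{\mathbb{S}^n}s_0)^{\exp(\frac{n+1}{n}T)}$. If $\max_{\mathbb{S}^n}s_0<1$, then $\log R(0)<0$ and $\exp(\frac{n+1}{n}t)\ge 1$ force $\log R(t)<0$, i.e.\ $R(t)<1$. In both cases $R(t)\le\max\{(\max_{\mathbb{S}^n}s_0)^{\exp(\frac{n+1}{n}T)},1\}$ on $[0,T]$, whence the same bound holds for $R_0$ and $s(\cdot,t)\le R(t)\le R_0$. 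The lower bound is the mirror image: at a minimum point $\bar{\nabla}^2 s\ge 0$ gives $\det b\ge r(t)^n$, hence $\frac{d}{dt}\log r(t)\ge\frac{n+1}{n}\log r(t)$ and $\log r(t)\ge\exp(\frac{n+1}{n}t)\log r(0)$ with $r(0)=\min_{\mathbb{S}^n}s_0$; the analogous dichotomy gives $r(t)\ge\min\{(\min_{\mathbb{S}^n}s_0)^{\exp(\frac{n+1}{n}T)},1\}$, so $s(\cdot,t)\ge r_0\ge$ the same quantity.

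I do not expect a genuine obstacle here. The points requiring care are: justifying the a.e.\ differentiation of $R(t),r(t)$ in time (Hamilton's trick, or the Lipschitz bound from \eqref{sevo}); keeping the inequalities correctly oriented through the substitution $u=\log R$ even where $\log s$ changes sign, which is handled by always multiplying by the positive factor $s/(2n)$; and the elementary observation that, because the bounds are uniform on $[0,T]$ and monotone in $t$, they pass to $R_0=\sup R(t)$ and $r_0=\inf r(t)$. Uniform convexity of the solution enters precisely to keep $\det(\bar{\nabla}^2 s+sI)$ strictly positive, so that both the logarithm in \eqref{sevo} and the logarithmic ODE comparison are legitimate.
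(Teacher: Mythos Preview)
Your proof is correct and follows essentially the same route as the paper: apply the maximum principle at a spatial extremum, use $\bar\nabla^2 s\le 0$ (resp.\ $\ge 0$) there to pinch $\det(\bar\nabla^2 s+sI)$ against $s^n$, derive the differential inequality $\frac{d}{dt}\log s_{\max}\le \frac{n+1}{n}\log s_{\max}$, integrate, and split into cases according to whether $s_{\max}(0)\gtrless 1$. Your write-up is in fact slightly more careful than the paper's (you invoke Hamilton's trick for the a.e.\ differentiability of $R(t)$ and make the Gr\"onwall step explicit), but the argument is the same.
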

\begin{proof}
	Let $ s_{\max}(t)=\max_{x\in\mathbb{S}^n}s(x,t)$, which is also denoted as $s(x_t,t) $. Thus for a fixed time $ t $, at the point $ x_t $, thanks to maximality, one may find $ \bar{\nabla}s(x_t,t)=0 $, and $ \bar{\nabla}^2s(x_t,t) $ is negative semi-definite. According to
	\begin{align*}
		\det(\bar{\nabla}^2s+sI)(x_t,t)\le\det(\bar{\nabla}^2s+sI-\bar{\nabla}^2s)=s^n(x_t,t),
	\end{align*}
	\eqref{sevo} generates
	\begin{align*}
		\frac{ds_{\max}}{dt}\le\frac{n+1}{n}s_{\max}\log(s_{\max}).
	\end{align*}
	Since $ s_{\max}(t)\leq1 $ supports the desired result, we assume $ s_{\max}(t)>1 $. Solving the above inequality yields
	\begin{align*}
		s_{\max}(t)\le s_{\max}(0)^{\exp(\frac{n+1}{n}t)}.
	\end{align*}
	Hence we conclude that
	\begin{align*}
		s(\cdot,t)\le R_0\le \max\{s_{\max}(0)^{\exp(\frac{n+1}{n}T)},1\}.
	\end{align*}
	By a similar manipulation, the inequality
	\begin{align*}
			s(\cdot,t)\ge r_0\ge \min\{s_{\min}(0)^{\exp(\frac{n+1}{n}T)},1\}
	\end{align*}
 can be proved.
\end{proof}
For a convex hypersurface $ \mathcal{M} $, the gradient estimate is a direct consequence of the $ L^\infty $-norm estimate.
\begin{lem}\label{C1}
		Let $ s(\cdot, t), t\in [0,T], $ be a solution to the flow \eqref{sevo}. Then
	\begin{equation*}
		|\bar{\nabla} s(\cdot, t)|\le \max_{{\mathbb{S}}^n\times[0,T]}s, \qquad \forall t \in [0,T].
	\end{equation*}
\end{lem}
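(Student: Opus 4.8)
The plan is to prove the inequality by a direct geometric argument using only the description of the support function recalled in the previous subsection together with the convexity of the hypersurface; the evolution equation \eqref{sevo} will play no role. In fact I will establish at each fixed time the sharper bound $|\bar{\nabla}s(\cdot,t)|\le\max_{\mathbb{S}^n}s(\cdot,t)$, from which the stated estimate follows immediately by taking the supremum over $t\in[0,T]$.

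First I would fix $t\in[0,T]$ and a point $x\in\mathbb{S}^n$, and denote by $\mathcal{M}_t=\bar{X}(\mathbb{S}^n,t)$ the hypersurface determined by $s(\cdot,t)$; since the standing hypothesis is that $X_0$, and hence each $\mathcal{M}_t$, encloses the origin, $s(\cdot,t)$ is a positive support function and the identities recalled above apply. The key observation is that in the embedding \eqref{embedding}, $\bar{X}(x)=s(x)x+\bar{\nabla}s(x)$, the vector $\bar{\nabla}s(x)$ is tangent to $\mathbb{S}^n$ at $x$ and therefore orthogonal to $x$ in $\R^{n+1}$, while $\bar{X}(x)\in\mathcal{M}_t$. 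If $\bar{\nabla}s(x,t)=0$ the desired inequality is trivial, so assume $\bar{\nabla}s(x,t)\neq0$ and put $y=\bar{\nabla}s(x,t)/|\bar{\nabla}s(x,t)|\in\mathbb{S}^n$, which is orthogonal to $x$.

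Next I would evaluate the support function in the direction $y$ at the point $\bar{X}(x,t)\in\mathcal{M}_t$:
\begin{align*}
s(y,t)=\sup_{z\in\mathcal{M}_t}\langle y,z\rangle&\ge\langle y,\bar{X}(x,t)\rangle=s(x,t)\langle y,x\rangle+\langle y,\bar{\nabla}s(x,t)\rangle\\
&=|\bar{\nabla}s(x,t)|,
\end{align*}
where the last step uses $\langle y,x\rangle=0$. Hence $|\bar{\nabla}s(x,t)|\le s(y,t)\le\max_{\mathbb{S}^n}s(\cdot,t)\le\max_{\mathbb{S}^n\times[0,T]}s$, and since $x$ and $t$ were arbitrary this proves the lemma. (Alternatively, the same chain of inequalities can be read off from the subgradient inequality $s(y)\ge s(x)+\langle Ds(x),y-x\rangle$ for the $1$-homogeneous convex extension of $s$ to $\R^{n+1}\setminus\{0\}$, using $Ds(x)=\bar{X}(x)$, $|x|=1$, and $\langle\bar{\nabla}s(x),x\rangle=0$.)

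I do not expect any genuine obstacle: the statement is an elementary consequence of convexity and is independent of the particular flow, holding for the support function of any smooth, closed, uniformly convex hypersurface enclosing the origin. The only points requiring a word of care are the degenerate case $\bar{\nabla}s(x,t)=0$ and the verification that, because every $\mathcal{M}_t$ encloses the origin, $s(\cdot,t)$ is indeed a positive support function, so that the formula $s(y,t)=\sup_{z\in\mathcal{M}_t}\langle y,z\rangle$ is legitimate.
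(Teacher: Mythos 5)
Your argument is correct and is exactly the standard convexity argument the paper invokes when it says, in its one-line proof, ``This is due to convexity'': evaluating the support function in the direction $y=\bar{\nabla}s(x,t)/|\bar{\nabla}s(x,t)|$ at the point $\bar{X}(x,t)\in\mathcal{M}_t$ gives $|\bar{\nabla}s(x,t)|\le s(y,t)\le\max_{\mathbb{S}^n\times[0,T]}s$. So you have simply written out in full the same proof the paper leaves implicit; no gap.
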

\begin{proof}
	This is due to convexity.
\end{proof}
In order to derive the a priori estimates for the higher derivatives of $ s $,  it is advantageous to express \eqref{sevo} locally in Euclidean space. Consider the restriction of $ s(x,t) $ to the hyperplane $  x_{n+1} = -1 $, denoted as $ u(y,t) = s(y,-1,t) $. Then $ u $ is convex in $ \R^n $,  and
\begin{gather*}
	\det\left(D^2u\right)(y,t)=(1+|y|^2)^{-\frac{n+2}{2}}\det(\bar{\nabla}^2s+sI)(x,t),\\
	\frac{\partial u}{\partial t}(y,t)=\sqrt{1+|y|^2}\frac{\partial s}{\partial t}(x,t),
\end{gather*}
for $ x=(y,-1)/\sqrt{1+|y|^2} $. Therefore,
\begin{equation}\label{eqR}
	\frac{\partial u}{\partial t}(y,t)=\frac{1}{2n}u\log(\det D^2u)+\frac{n+2}{2n}u\log u, \qquad y\in\R^n.
\end{equation}
Now let us begin with estimating upper and lower bounds for the principal radii of curvatures of $ X(\cdot,t) $ based on $ r_0^{-1}, R_0 $, and the initial data. We first provide a uniform upper bound on the Gauss curvature of $ X(\cdot,t) $, which is equivalent to a uniformly positive lower bound on $ \det(\bar{\nabla}^2s+sI) $.
\begin{lem}\label{gauss}
		Let $ s(\cdot, t), t\in [0,T] $, be a solution to the flow \eqref{sevo}. Then there is a constant $ L > 0 $, depending only on $ n $, $ r_0 $, $ R_0 $, and initial hypersurface $ \mathcal{M}_0 $, such that
	\begin{equation*}
					\det(\bar{\nabla}^2 s+sI)\ge L,\qquad \forall (x,t)\in \mathbb{S}^n\times [0,T].
	\end{equation*}
\end{lem}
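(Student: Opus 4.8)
The plan is to follow the classical Tso-type argument, which yields a lower bound on $\det(\bar{\nabla}^2 s + sI)$ (equivalently an upper bound on the Gauss curvature $K$) by considering an auxiliary quantity that blows up if $K$ does, and applying the maximum principle to it. Since the spatial domain $\mathbb{S}^n$ is compact, it suffices to work at an interior spatial maximum and derive a differential inequality that prevents the quantity from escaping to $+\infty$. The natural auxiliary function here is something of the form
\[
W(x,t) = \frac{-\partial_t s + \text{(linear correction in }s)}{s - a},
\]
where $a$ is a constant chosen with $0 < a < r_0$ so that $s - a \ge r_0 - a > 0$ by Lemma \ref{C0}; the numerator is essentially the speed $\frac{s}{2n}\log(s^{n+2}\det(\bar\nabla^2 s + sI))$ up to controlled lower-order terms. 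A uniform upper bound on $W$ translates, via the $C^0$ and $C^1$ estimates already established (Lemmas \ref{C0} and \ref{C1}), into a uniform lower bound on $\det(\bar{\nabla}^2 s + sI)$, because $\log\det(\bar\nabla^2 s + sI) = \frac{2n}{s}\partial_t s - (n+2)\log s$ and the right-hand side is then bounded above.

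First I would compute the evolution equation for $s_t$ (or for the speed function $\Phi := \partial_t s$) under the flow \eqref{sevo}. Differentiating \eqref{sevo} in $t$ gives a linear parabolic equation for $\Phi$ of the form $\partial_t \Phi = \frac{s}{2n} b^{ij}\bar\nabla^2_{ij}\Phi + (\text{zeroth-order terms involving }\Phi, s, \text{tr}(b^{-1}))$, where $(b_{ij}) = (\bar\nabla^2_{ij}s + s\delta_{ij})$ and $b^{ij}$ is its inverse. Then I would compute $\partial_t W$ using the quotient rule and the evolution of $s$ itself, collect terms, and isolate the crucial good term: at an interior maximum of $W$ in space, $\bar\nabla W = 0$ forces $\bar\nabla \Phi$ proportional to $\bar\nabla s$, and the second-order term $b^{ij}\bar\nabla^2_{ij}W \le 0$. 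The key structural feature that makes the argument work is that the term $\text{tr}(b^{ij} b_{ij}) = n$ combined with the $\log\det$ structure produces a term like $-c\, W \log W$ or $-c\, W^{1+1/n}$ (coming from the arithmetic-geometric mean inequality applied to $b^{ij}$ and the fact that $\det b = K^{-1}$ is large precisely when $W$ is large), which dominates the remaining lower-order terms for $W$ large. This gives $\frac{dW_{\max}}{dt} \le C - c\, W_{\max}\log W_{\max}$ (or a similar superlinear coercive inequality), from which $W \le C(n, r_0, R_0, \mathcal{M}_0)$ on $[0,T]$ follows by an ODE comparison, with the initial bound coming from the smoothness of $\mathcal{M}_0$.

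The main obstacle I anticipate is bookkeeping the zeroth-order terms correctly after differentiating, and in particular confirming that the additional term $\frac{n+2}{2n}u\log u$ (equivalently the $s\log s$ contribution absent from the pure logarithmic Gauss curvature flow) does not spoil the sign of the dominant coercive term. Because $s$ is pinched between $r_0$ and $R_0$, the $\log s$ factor is bounded, so these extra contributions are lower-order compared to the $W\log W$ term; the delicate point is choosing the linear correction in the numerator of $W$ (the constant $a$ and possibly an extra multiple of $s$) so that the troublesome terms cancel or are absorbed. An alternative, and perhaps cleaner, route is to work with the localized equation \eqref{eqR} for $u$ on $\R^n$ and run Tso's original computation there with the speed $\frac{1}{2n}u\log\det D^2u + \frac{n+2}{2n}u\log u$ and auxiliary function $W = (u_t - \varepsilon)/(u - \text{const})$ cut off near a fixed interior point; but since the flow is already written globally on $\mathbb{S}^n$, the intrinsic computation with $(b_{ij})$ is more transparent and avoids dealing with the cutoff. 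Either way, once $W$ is bounded the lemma follows immediately.
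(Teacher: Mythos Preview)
Your proposal is correct and matches the paper's approach: the auxiliary function is precisely $\Psi = -s_t/(s - \tfrac{1}{2}r_0)$ (no additional linear correction in the numerator is needed), and the paper carries out the maximum-principle computation in the localized coordinates $u$ on $\R^n$---your ``alternative route''---arriving, via the AM--GM bound $\sum u^{kk}\ge n(\det D^2u)^{-1/n}$, at the algebraic inequality $(\det D^2u)^{-1/n} \le L\bigl(1 + \log((\det D^2u)^{-1/n})\bigr)$ at the maximum point. Because the speed here is logarithmic in $K$, the relationship between $W$ and $K^{1/n}$ is exponential rather than polynomial, so the coercive term is stronger than the $W\log W$ you anticipated; this only helps, and the mechanism is exactly the one you describe.
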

\begin{proof}
	Let us introduce the auxiliary function
	\begin{equation*}
		\Psi(x,t)=-\frac{s_t(x,t)}{s(x,t)-\frac{1}{2}r_0},
	\end{equation*}
	where the positive constant $ r_ 0$ is defined in \eqref{r_0}.
	Suppose that the supremum of $ \Psi(x,t) $ is attained at the south pole $ x=(0,\cdots,0,-1)$ and $ \bar{t}>0. $
	Let $ u $  indicate the restriction of $ s $ on the hyperplane $ x_{n+1}=-1 $. Then
	\begin{equation*}
		\psi(y,t)=-\frac{u_t(y,t)}{u(y,t)-\frac{1}{2}r_0\sqrt{1+|y|^2}}
	\end{equation*}
	attains its supremum at $ (0,\bar{t}) $. Hence at $ (0,\bar{t}) $ we have, for each $ k $,
	\begin{align*}
		0\le&\psi_t=-\frac{u_{tt}}{u-\frac{1}{2}r_0}+\frac{u_t^2}{(u-\frac{1}{2}r_0)^2},\\
		0=&\psi_k=-\frac{u_{kt}}{u-\frac{1}{2}r_0}+\frac{u_tu_k}{(u-\frac{1}{2}r_0)^2},
	\end{align*}
	and
	\begin{align*}
		0\ge \psi_{kk}=-\frac{u_{kkt}}{u-\frac{1}{2}r_0}+\frac{u_t(u_{kk}-\frac{1}{2}r_0)}{(u-\frac{1}{2}r_0)^2}.
	\end{align*}
	On the other hand, differentiating \eqref{eqR} produces
	\begin{align*}
		u_{tt}=&\frac{1}{2n}u_t\log(\det D^2u)+\frac{1}{2n}uu^{ij}u_{ijt}+\frac{n+2}{2n}u_t\log u+\frac{n+2}{2n}u_t\\
		=&\frac{u_t^2}{u}+\frac{1}{2n}uu^{ij}u_{ijt}+\frac{n+2}{2n}u_t.
	\end{align*}
	Rotating the axes so that $ \{u^{ij}\} $ is diagonal at $ (0,\bar{t}) $. Then
	\begin{align*}
		0\ge& \frac{1}{2n}uu^{kk}\psi_{kk}-\psi_t\\		=&\frac{1}{2n}uu^{kk}\left(-\frac{u_{kkt}}{u-\frac{1}{2}r_0}+\frac{u_t(u_{kk}-\frac{1}{2}r_0)}{(u-\frac{1}{2}r_0)^2}\right)+\frac{u_{tt}}{u-\frac{1}{2}r_0}-\frac{u_t^2}{(u-\frac{1}{2}r_0)^2}\\
		=&\frac{uu_t(n-\frac{1}{2}r_0\sum_{k=1}^n u^{kk})}{2n(u-\frac{1}{2}r_0)^2}+\frac{u_t^2}{u(u-\frac{1}{2}r_0)}+\frac{(n+2)u_t}{2n(u-\frac{1}{2}r_0)}-\frac{u_t^2}{(u-\frac{1}{2}r_0)^2}.
	\end{align*}
	Since $u_t\geq-1$ yields the desired result by \eqref{eqR}, we can assume  $ u_t<-1 $ at $ (0,\bar{t}) $. Multiplying the two sides of the above inequality by $ u_t^{-1}$, we obtain
	\begin{align*}
		0\le\frac{u(n-\frac{1}{2}r_0\sum_{k=1}^n u^{kk})}{2n(u-\frac{1}{2}r_0)^2}+\frac{u_t}{u(u-\frac{1}{2}r_0)}+\frac{n+2}{2n(u-\frac{1}{2}r_0)}-\frac{u_t}{(u-\frac{1}{2}r_0)^2}.
	\end{align*}
	Notice that $\sum_{k=1}^n u^{kk}\ge n(\det D^2u)^{-\frac{1}{n}} $. Therefore, in view of \eqref{eqR}, we find,
	\begin{align*}
		\frac{n}{2}r_0u^2(\det D^2u)^{-\frac{1}{n}}&\le \frac{n}{2}r_0u\log\left((\det D^2u)^{-\frac{1}{n}}\right)+\frac{n+2}{2}r_0u\log \left(u^{-1}\right)\\
		&\qquad+nu^2+(n+2)\left(u-\frac{1}{2}r_0\right)u.
	\end{align*}
	Applying Lemma \ref{C0} leads to the estimate
	\begin{equation*}
		(\det D^2u)^{-\frac{1}{n}}\le L\left(1+\log\left((\det D^2u)^{-\frac{1}{n}}\right)\right),
	\end{equation*}
	where $ L $ is an appropriate positive constant that depends only on  $ n $, $ r_0 $ and $R_0 $.
	The result easily follows.
\end{proof}
Now we can prove that the principal curvature radii of $ X(\cdot,t) $ are bounded by positive constants from both above and below.
\begin{lem}\label{2destimate}
	Let $ s(\cdot, t), t\in [0,T] $ be a solution to the flow \eqref{sevo}.
	Then, for all $ t\in [0,T] $ and $ x\in \mathbb{S}^n $, one has
	\begin{eqnarray}\label{C2ie}
		\frac{1}{L}I\le (\bar{\nabla}^2s+sI)\le LI,
	\end{eqnarray}
	where $ L $ is a positive constant depending only on  $ r_0, R_0 $ and the initial values.
\end{lem}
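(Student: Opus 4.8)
The plan is to reduce \eqref{C2ie} to a single upper bound on the largest principal radius of curvature and then to obtain that bound by a parabolic maximum principle, in the spirit of \cite{Chou,LQR,jieu}. Lemma~\ref{C0} and Lemma~\ref{C1} already give $r_0\le s\le R_0$ and $|\bar\nabla s|\le R_0$ on $\mathbb{S}^n\times[0,T]$, and Lemma~\ref{gauss} gives the lower bound $\det(b_{ij})\ge L$ for the matrix $b_{ij}:=\bar\nabla^2_{ij}s+s\delta_{ij}$ of principal radii. Hence \eqref{C2ie} is equivalent to a single inequality $\lambda_{\max}(b_{ij})\le\Lambda$ with $\Lambda=\Lambda(n,r_0,R_0,\mathcal{M}_0)$: once this holds, the smallest eigenvalue satisfies $\lambda_{\min}=\det(b_{ij})\big/\prod_{j}\lambda_j\ge L\,\Lambda^{1-n}$, which is the lower half of \eqref{C2ie}, the upper half being $\lambda_{\max}\le\Lambda$ itself. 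So the whole task is the one-sided $C^2$ estimate on $s$.

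To prove $\lambda_{\max}(b_{ij})\le\Lambda$ I would apply the parabolic maximum principle on $\mathbb{S}^n\times[0,T]$ to the quantity $\zeta:=\log\lambda_{\max}(b_{ij})-A\log s$ with $A=1+\delta$, where $\delta=\delta(n,r_0,R_0)>0$ is a small constant to be fixed at the end, handling the possible non-smoothness of $\lambda_{\max}$ near a maximum in the usual way by freezing a unit eigenvector $\xi$ and writing $\bar\nabla_\xi=\xi^k\bar\nabla_k$. With $F:=\frac{s}{2n}\log\!\big(s^{n+2}\det(b_{ij})\big)$ the speed in \eqref{sevo}, so that $\partial_t s=F$ and $\partial_tb_{ij}=\bar\nabla_i\bar\nabla_jF+F\delta_{ij}$, and $\mathcal{L}:=\frac{s}{2n}b^{kl}\bar\nabla_k\bar\nabla_l$ the linearization ($(b^{kl})=(b_{kl})^{-1}$), a computation using the Codazzi symmetry $\bar\nabla_kb_{ij}=\bar\nabla_ib_{kj}$ and the Simons-type commutation identity on $\mathbb{S}^n$ yields
\[
\partial_tb_{\xi\xi}-\mathcal{L}b_{\xi\xi}=\frac{\Phi}{2n}\lambda_{\max}-\frac{s}{2n}\lambda_{\max}\sum_k\frac{1}{\lambda_k}-\frac{s}{2n}\,b^{kp}b^{lq}(\bar\nabla_\xi b_{pq})(\bar\nabla_\xi b_{kl})+(\text{lower-order terms}),
\]
where $\Phi:=\log\!\big(s^{n+2}\det(b_{ij})\big)$ and the third term is $\le0$ by the concavity of $M\mapsto\log\det M$. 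Two features are decisive: the positive Ricci curvature of $\mathbb{S}^n$ produces the genuinely negative term $-\frac{s}{2n}\lambda_{\max}\sum_k\lambda_k^{-1}$, and the $-A\log s$ correction contributes, through the identity $\partial_t\log s-\mathcal{L}\log s=\frac{\Phi}{2n}-\frac12+\frac{s}{2n}\sum_k\lambda_k^{-1}+\frac{1}{2ns}b^{kl}\bar\nabla_ks\,\bar\nabla_ls$, both $-\frac{\delta}{2n}\Phi$ and an additional multiple of $-\frac{s}{2n}\sum_k\lambda_k^{-1}$.

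At an interior space--time maximum of $\zeta$ one has $\bar\nabla\lambda_{\max}=A\lambda_{\max}\bar\nabla s/s$; substituting this relation, invoking $r_0\le s\le R_0$, $|\bar\nabla s|\le R_0$ and $\det(b_{ij})\ge L$, and using Cauchy--Schwarz to absorb the first-order terms into the favourable concavity term, I expect to reach an inequality of the schematic shape
\[
0\le\partial_t\zeta-\mathcal{L}\zeta\le-c_1\max\{\Phi,0\}-c_2\sum_k\frac{1}{\lambda_k}+C,
\]
with $c_1,c_2>0$ and $C$ depending only on $n,r_0,R_0$, valid provided $\delta$ is chosen small enough that the unfavourable term $\frac{1}{2ns}b^{kl}\bar\nabla_ks\,\bar\nabla_ls\le\frac{R_0^2}{2nr_0}\sum_k\lambda_k^{-1}$ is outweighed by the good $-\frac{(1+A)s}{2n}\sum_k\lambda_k^{-1}$, which constrains $\delta$ to be of order $r_0^2/R_0^2$. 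From this inequality one first reads off $\Phi\le C/c_1$, hence an upper bound $\det(b_{ij})\le C_1(n,r_0,R_0)$; feeding this back gives $\sum_k\lambda_k^{-1}\ge(n-1)\big(\lambda_{\max}/\det(b_{ij})\big)^{1/(n-1)}\ge c(n,r_0,R_0)\,\lambda_{\max}^{1/(n-1)}$, so the inequality forces $\lambda_{\max}\le\Lambda(n,r_0,R_0,\mathcal{M}_0)$ at the maximum, hence everywhere, which establishes \eqref{C2ie}.

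The step I expect to be the main obstacle is precisely this one-sided $C^2$ estimate. Since the speed $F$ is only logarithmically degenerate, the reaction term $F\delta_{ij}$ in the evolution of $b_{ij}$ has no sign, and the dangerous contribution $\frac{\Phi}{2n}\lambda_{\max}$ is large and positive exactly where the hypersurface is locally flat (that is, $\det b$ large); the argument closes only because the two independent good terms --- the spherical Ricci term $-\frac{s}{2n}\lambda_{\max}\sum_k\lambda_k^{-1}$ and the term $-\frac{\delta}{2n}\Phi$ coming from the $-A\log s$ correction --- dominate it in the complementary regimes ``$\det b$ bounded'' and ``$\det b$ large'', while the lower bound $\det b\ge L$ of Lemma~\ref{gauss} keeps $\Phi$ from being very negative; calibrating $\delta$ against the first-order errors is the delicate point. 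Once \eqref{C2ie} is available, \eqref{sevo} is uniformly parabolic with right-hand side concave in the Hessian, so Krylov--Safonov together with Schauder estimates upgrade $s$ to a smooth solution with bounds of every order, completing the argument in the manner of \cite{Chou,LQR,jieu}.
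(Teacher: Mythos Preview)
Your overall strategy is sound and would lead to a correct proof, but it is \emph{not} the route the paper takes. The paper works in local Euclidean coordinates (the restriction $u$ of $s$ to a tangent hyperplane) and applies the maximum principle to the auxiliary function
\[
\Phi(x,t)=\sqrt{s_{\xi\xi}}+|\bar\nabla s|^2+s^2
\]
(the last two terms equal $|X|^2$). At a maximum, after differentiating \eqref{eqR} twice and using the first-derivative condition, the computation collapses to an inequality of the shape $\sqrt{u_{11}}\le L\bigl(1+\log u_{11}\bigr)$, which immediately bounds $u_{11}$; the square root, not a log-barrier, is what makes the logarithmic growth of the speed harmless. Your choice $\zeta=\log\lambda_{\max}-A\log s$ is an intrinsic-on-$\mathbb{S}^n$ alternative in the spirit of Urbas/Tso: the Ricci commutation indeed produces the good term $-\tfrac{s}{2n}\lambda_{\max}\sum_k\lambda_k^{-1}$, and the $-A\log s$ barrier simultaneously kills the reaction term $\tfrac{\Phi}{2n}$ and contributes another copy of $-\tfrac{s}{2n}\sum_k\lambda_k^{-1}$, so one obtains bounds on both $\Phi$ and $\sum_k\lambda_k^{-1}$ at once and closes via $\lambda_{\max}^{1/(n-1)}\le C(\det b)^{1/(n-1)}\sum_k\lambda_k^{-1}$.

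Two small inaccuracies in your sketch: first, the term $\tfrac{1}{2ns}b^{kl}\bar\nabla_ks\,\bar\nabla_ls$ coming from $-A\log s$ enters $(\partial_t-\mathcal{L})\zeta$ with coefficient $-A$ and is therefore favourable, not unfavourable; the genuinely dangerous gradient contribution is the one hidden in the ``$+$'' log-correction term $\tfrac{s}{2n\lambda_{\max}^2}b^{kl}\bar\nabla_k b_{\xi\xi}\bar\nabla_l b_{\xi\xi}$, and this is absorbed by the concavity term through Codazzi ($\bar\nabla_k b_{\xi\xi}=\bar\nabla_\xi b_{k\xi}$) rather than by choosing $\delta$ small. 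Second, once this absorption is done cleanly, any $\delta>0$ works and no calibration against $r_0^2/R_0^2$ is needed. In short: your route is a legitimate alternative, more geometric and avoiding the passage to local Euclidean coordinates; the paper's route is more hands-on but makes the closing inequality $\sqrt{u_{11}}\lesssim 1+\log u_{11}$ completely explicit.
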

\begin{proof}
	We start by considering the following auxiliary function
	\begin{eqnarray*}
		\Phi(x,t)=\sqrt{s_{\xi\xi}(x,t)}+|\bar{\nabla} s(x,t)|^2+|s(x,t)|^2.
	\end{eqnarray*}
	Suppose that the supremum
	\begin{eqnarray*}
		{	\sup\{\Phi(x,t): (x,t)\in \mathbb{S}^n\times[0,T]}, \text{$ \xi $ tangential to $ \mathbb{S}^n $} , |\xi|=1\}
	\end{eqnarray*}
	is attained at the south pole $ x=(0,\cdots, 0, -1) $ at $ t=\bar{t}>0 $ and in the direction $ \xi=e_1 $ . For any $ x $ on the south hemisphere, let
	\begin{eqnarray*}
		\xi(x)=\left(\sqrt{1-x_1^2},-\frac{x_1x_2}{\sqrt{1-x_1^2}},\cdots,-\frac{x_1x_{n+1}}{\sqrt{1-x_1^2}}\right).
	\end{eqnarray*}
	Let $ u $ be the restriction of $ s $ on $ x_{n+1}=-1 $. For those points on $\mathcal{M}_t$ whose outer normals lie in the south hemisphere ($x_{n+1}<0$), their coordinates can be computed by (see e.g. \cite{TsoK})
	\begin{align*}
		X=\left(\frac{\partial u}{\partial y_1},\cdots,\frac{\partial u}{\partial y_n},\sum_{i=1}^{n}y_i\frac{\partial u}{\partial y_i}-u\right).
	\end{align*}
	  The homogeneity of $ s $ gives
	\begin{eqnarray*}
		s_{\xi\xi}(x,t)=u_{11}(y,t)\frac{(1+y_1^2+\cdots+y_n^2)^\frac{3}{2}}{1+y_2^2+\cdots+y_n^2},
	\end{eqnarray*}
	where $ y=-(x_1,\cdots,x_n)/x_{n+1} $ in $ \R^n $.
	Thus, in view of \eqref{embedding}, the function
	\begin{eqnarray*}
		\phi(y,t)=\sqrt{u_{11}\frac{(1+y_1^2+\cdots+y_n^2)^\frac{3}{2}}{1+y_2^2+\cdots+y_n^2}}+\sum_{i=1}^{n}u_i^2+\left(\sum_{i=1}^{n}y_iu_i-u\right)^2
	\end{eqnarray*}
	attains its maximum at $ (y,t)=(0,\bar{t}) $. Without loss of generality we may further assume that the Hessian of $ u $ at $ (0,\bar{t}) $ is diagonal. Hence at $ (0,\bar{t}) $ we have, for each $ k $,
	\begin{align}
		0\le \phi_t&=\frac{1}{2}(u_{11})^{-\frac{1}{2}}u_{11t}+2\sum_{i=1}^nu_iu_{it}+2uu_t,\\
		0=\phi_k&=\frac{1}{2}(u_{11})^{-\frac{1}{2}}u_{11k}+2\sum_{i=1}^nu_iu_{ik}\label{phik}
	\end{align}
	and
	\begin{align*}
		0\ge\phi_{kk}=&\frac{1}{2}(u_{11})^{-\frac{1}{2}}(u_{11kk}+\tau_ku_{11})-\frac{1}{4}(u_{11})^{-\frac{3}{2}}(u_{11k})^2\\
		&\quad+2\left(\sum_{i=1}^nu_{ik}^2+\sum_{i=1}^nu_iu_{ikk}\right)-2uu_{kk},
	\end{align*}
	where $ \tau_k=3 $ if $ k>1 $ and $ \tau_1=1 $.
	On the other hand, differentiating \eqref{eqR} yields
	\begin{align}
	\label{ukt}	u_{kt}=&\frac{1}{2n}uu^{ii}u_{iik}+\frac{1}{2n}u_k\log(\det D^2u)+\frac{n+2}{2n}u_k\log u+\frac{n+2}{2n}u_k,\\\notag
		u_{kkt}=&\frac{1}{2n}uu^{ii}u_{iikk}-\frac{1}{2n}uu^{ii}u^{jj}(u_{ijk})^2+\frac{1}{n}u_ku^{ii}u_{iik}+\frac{n+2}{2n}u_{kk}\\\label{ukkt}
		&\quad +\frac{1}{2n}u_{kk}\log(\det D^2u)+\frac{n+2}{2n}u_{kk}\log u+\frac{n+2}{2n}\frac{u_k^2}{u},
	\end{align}
	where $ \{u^{ij}\} $ is the inverse matrix of $ \{u_{ij}\} $. Hence at $ (0,\bar{t}) $ we have
	\begin{align*}
		0\le& \phi_t-\frac{1}{2n}uu^{kk}\phi_{kk}\\
		=&\frac{1}{2}(u_{11})^{-\frac{1}{2}}\left(u_{11t}-\frac{1}{2n}uu^{kk}u_{11kk}\right)+2\left(\sum_{i=1}^nu_iu_{it}-\frac{1}{2n}u\sum_{i=1}^nu_iu^{kk}u_{ikk}\right)\\
&\quad -\frac{1}{2n}uu^{kk}\left(\frac{1}{2}(u_{11})^{-\frac{1}{2}}\tau_ku_{11}-\frac{1}{4}(u_{11})^{-\frac{3}{2}}(u_{11k})^2+2u_{kk}^2\right)+u^2+2uu_t.
	\end{align*}
Plugging \eqref{ukt} and \eqref{ukkt} into the above inequality, we are led to
\begin{align*}
	0\le& \frac{1}{2}(u_{11})^{-\frac{1}{2}}\left(-\frac{1}{2n}uu^{ii}u^{jj}(u_{ij1})^2+\frac{1}{n}u_1u^{ii}u_{ii1}+\frac{1}{2n}u_{11}\log(\det D^2u)\right.\\
	&\left.+\frac{n+2}{2n}u_{11}\log u+\frac{n+2}{2n}u_{11}+\frac{n+2}{2n}\frac{u_1^2}{u}\right)+u^2+2uu_t\\
	&\quad-\frac{1}{2n}uu^{kk}\left(\frac{1}{2}(u_{11})^{-\frac{1}{2}}\tau_ku_{11}-\frac{1}{4}(u_{11})^{-\frac{3}{2}}(u_{11k})^2+2u_{kk}^2\right)\\
	&\qquad +|Du|^2\left(\frac{1}{n}\log(\det D^2u)+\frac{n+2}{n}\log u+\frac{n+2}{n}\right).
\end{align*}
	In view of \eqref{phik} and assume $ u_{11}>1 $ at $ (0,\bar{t}) $, we get
	\begin{align*}
		0\le& -\frac{1}{2n}uu^{ii}u^{jj}(u_{ij1})^2+\frac{1}{n}u_1u^{ii}u_{ii1}+\frac{1}{2n}u_{11}\log(\det D^2u)\\
		&\quad +\frac{n+2}{2n}u_{11}\log u+\frac{n+2}{2n}u_{11}+\frac{n+2}{2n}\frac{u_1^2}{u}\\
		&\qquad -\frac{1}{2n}uu_{11}u^{kk}\tau_k+\frac{4}{n}u\sum_{k=1}^nu_k^2u_{kk}-\frac{2}{n}u\sqrt{u_{11}}\sum_{k=1}^nu_{kk}\\
		&\qquad\quad  +2\sqrt{u_{11}}|Du|^2\left(\frac{1}{n}\log(\det D^2u)+\frac{n+2}{n}\log u+\frac{n+2}{n}\right)\\
		&\qquad\qquad
		+2\sqrt{u_{11}}u^2+4\sqrt{u_{11}}u\left(\frac{1}{2n}u\log(\det D^2u)+\frac{n+2}{2n}u\log u\right).
	\end{align*}
	In fact, by
	\begin{eqnarray*}
		\left|\frac{1}{n}u_1u^{ii}u_{ii1}\right|\le \frac{u_1^2}{2u}+\frac{1}{2n}uu^{ii}u^{ii}(u_{ii1})^2,
	\end{eqnarray*}
    we may get
    $\displaystyle -\frac{1}{2n}uu^{ii}u^{jj}(u_{ij1})^2+\frac{1}{n}u_1u^{ii}u_{ii1}\le \frac{u_1^2}{2u}.$
	Furthermore, we observe that $\displaystyle -\frac{1}{2n}uu_{11}u^{kk}\tau_k\le 0 $, $\displaystyle \frac{4}{n}u\sum_{k=1}^n u_k^2u_{kk}\le\frac{4}{n}u|Du|^2u_{11} $ and $\displaystyle -\frac{2}{n}u\sqrt{u_{11}}\sum_{k=1}^n u_{kk}\le -\frac{2}{n}u(u_{11})^\frac{3}{2} $. It follows that
	\begin{align*}
		\frac{2}{n}u\sqrt{u_{11}}\le& \frac{u_1^2}{2u}+\frac{1}{2n}\log(\det D^2u)+\frac{n+2}{2n}\log u\\
		&\quad +\frac{n+2}{2n}+\frac{n+2}{2n}\frac{u_1^2}{u}+\frac{4}{n}u|Du|^2\\
		&\qquad +4|Du|^2\left(\frac{1}{2n}\log(\det D^2u)+\frac{n+2}{2n}\log u+\frac{n+2}{2n}\right)\\
		&\qquad\quad +2u^2+4u\left(\frac{1}{2n}u\log(\det D^2u)+\frac{n+2}{2n}u\log u\right).
	\end{align*}
	Employing Lemmas \ref{C0}-\ref{gauss}, we obtain the following inequality
	\begin{align*}
		\sqrt{u_{11}}\le L(1+\log u_{11}),
	\end{align*}
	which implies
	\begin{eqnarray*}
		u_{11}\le L,
	\end{eqnarray*}
	where $ L $ is a positive constant depending only on  $ n $, $  r_0 $ and  $ R_0 $. Recalling that we already have an upper bound on the Gauss curvature (Lemma \ref{gauss}), thus the proof is completed.
\end{proof}
Due to the above a priori estimates, it is clear that the convexity of the hypersurface $ \mathcal{M}_t $ is preserved under the flow \eqref{euclidean} and the solution $ X(\cdot,t) $ maintains uniformly convex.

According to Lemmas \ref{C0}-\ref{2destimate}, the flow \eqref{sevo} is uniformly parabolic and $\displaystyle \left|\frac{\partial s}{\partial t}\right|_{L^\infty(\mathbb{S}^n\times[0,T])}\le L $. It follows from the results of Krylov and Safonov \cite{safonov-krylov}  that the H\"{o}lder continuity estimates for $ \bar{\nabla}^2s $ and $ \frac{\partial s}{\partial t} $ can be obtained. Estimates for higher order derivatives then follow from the bootstrap argument and using the Schauder estimates (see \cite{krylov}). In view of Lemma \ref{C0}, $ R_0 $ and $ r_0 $ are always bounded in finite time interval.
Hence the maximal time $T=\infty$. The uniqueness of the smooth solution $ s(\cdot, t) $ follows by the parabolic comparison principle.
Thus we finish the proof of Theorem \ref{Uin-Exi}.
\section{Some results of the flow in Euclidean geometry}\label{sec-Euclidean Results}
It is of interest to study the flow \eqref{flow} from the perspective of Euclidean geometry. In this section, we study the self-similar solutions of the flow in Euclidean setting. Furthermore, by utilizing the motion characteristics of ellipsoids under this flow, we can describe the rough asymptotic behavior of the flow.
\subsection{The self-similar solutions}
Let us first discuss contracting self-similar solutions of the flow \eqref{euclidean}. We will follow the method used in \cite{Huisken1}. Suppose the initial hypersurface $ \mathcal{M}_0 $ satisfies the equation	$ \displaystyle\frac{s_0}{2n}\log\left(\frac{K_0}{s_0^{n+2}}\right)=\left<X_0, x\right> $, where $ s_0 $ and $ K_0 $ are the support function and Gauss curvature of the hypersurface $ \mathcal{M}_0 $, respectively. Then the homothetic deformation is given by
\begin{align*}
	X(x,t)=h(t)X(x,0),
\end{align*}
where $h(t)$ is a positive function with $h(0)=1$.
Next let us look for the function $ h(t) $. On the one hand, the hypersurface $ X(x,t) $ satisfies the evolution equation
\begin{align*}
	\left<\frac{d}{dt}X(x,t),x\right>=h'(t)\frac{s_0}{2n}\log\left(\frac{K_0}{s_0^{n+2}}\right).
\end{align*}
On the other hand, since it is a solution of the flow \eqref{euclidean}, we obtain the equation
\begin{align}\label{hd1}
	h'(t)\frac{s_0}{2n}\log\left(\frac{K_0}{s_0^{n+2}}\right)=\frac{s}{2n}\log\left(\frac{s^{n+2}}{K}\right).
\end{align}
Notice that we have $ s=hs_0 $ and $ K=h^{-n}K_0 $, by substituting them in \eqref{hd1} and simplifying, we obtain the equation
\begin{align*}
	(2n+2)h\log h=(h+h')\log\left(\frac{K_0}{s_0^{n+2}}\right).
\end{align*}
If $ h(t)+h'(t)\neq0 $, we have
\begin{align*}
	\frac{(2n+2)h\log h}{h+h'}=\log\left(\frac{K_0}{s_0^{n+2}}\right),
\end{align*}
which requires  that $ K_0/s_0^{n+2} $ is a positive constant, so the hypersurface $ X(x,0) $ is an ellipsoid centered at the origin. Otherwise, solving the equation with the initial value $ h(0)=1 $ yields $ h(t)=\exp(-t) $. Thus,  the homothetic deformation given by
\begin{align*}
	X(x,t)=\exp(-t)X(x,0)
\end{align*}
satisfies \eqref{euclidean} up to tangential diffeomorphisms. Our self-similarity condition therefore corresponds to the elliptic equation
\begin{align*}
	\frac{s}{2n}\log\left(\frac{K}{s^{n+2}}\right)=\left<X,x\right>.
\end{align*}
Noting that $ s=\left<X,x\right>$, it is equivalent to
\begin{align*}
	\exp\left(-\frac{2n}{n+2}\right)K^{\frac{1}{n+2}}=\left<X,x\right>.
\end{align*}
Thus, the hypersurface $ X(x,t) $ is a family of ellipsoids centered at the origin (see \cite{Calabi}).
The expanding self-similar solutions of \eqref{euclidean} can be discussed in a similar manner. Therefore, we conclude that
\begin{thm}
	The only strictly convex closed smooth self-similar (both shrinking and expanding) solutions of the flow \eqref{euclidean} are ellipsoid centered at the origin.
\end{thm}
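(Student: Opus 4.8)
The plan is to turn the self-similarity ansatz into a separation of variables: a factor depending only on $t$, obeying an ordinary differential equation for the homothety ratio, and a factor depending only on the point of $\mathbb{S}^n$, which is thereby forced to be constant --- and constancy of that second factor is precisely the equation that cuts out an origin-centred ellipsoid.

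First I would write a self-similar solution of \eqref{euclidean} as $X(x,t)=h(t)X_0(x)$ with $h>0$, $h(0)=1$, where $X_0=X(\cdot,0)$ parametrizes $\mathcal{M}_0$ by its inverse Gauss map, a shrinker corresponding to $h$ decreasing and an expander to $h$ increasing. Since a homothety preserves unit outer normals, the support function and the Gauss curvature of $\mathcal{M}_t$ (as functions on $\mathbb{S}^n$) transform by $s(\cdot,t)=h(t)s_0$ and $K(\cdot,t)=h(t)^{-n}K_0$, and the normal speed is $\langle\partial_tX,x\rangle=\langle h'(t)X_0,x\rangle=h'(t)s_0$. Imposing the normal-speed relation of \eqref{euclidean}, namely $\langle\partial_tX,x\rangle=\tfrac{s}{2n}\log\!\bigl(\tfrac{s^{n+2}}{K}\bigr)$, and cancelling the common positive factor $s_0$, one is led to
\[
\frac{2n\,h'(t)}{h(t)}=(2n+2)\log h(t)-\log\!\Bigl(\frac{K_0(x)}{s_0(x)^{\,n+2}}\Bigr).
\]
The left-hand side and the first term on the right depend only on $t$, whereas $\log\bigl(K_0/s_0^{\,n+2}\bigr)$ depends only on $x\in\mathbb{S}^n$; hence this quantity must be a positive constant $c$, and $h$ then solves the ODE $2nh'/h=(2n+2)\log h-\log c$ with $h(0)=1$, which is linear in $w:=\log h$ and has solution $h(t)=\exp\!\bigl[\tfrac{\log c}{2(n+1)}\bigl(1-e^{(n+1)t/n}\bigr)\bigr]$ --- a shrinking homothety when $c>1$, an expanding one when $c<1$, and the stationary affine sphere when $c=1$.

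It then remains only to identify the hypersurfaces satisfying $K_0=c\,s_0^{\,n+2}$ for a positive constant $c$. By \eqref{rel-support} this is equivalent to the equi-affine support function $\rho=s_0K_0^{-1/(n+2)}$ being constant, i.e.\ $T\equiv0$, i.e.\ $\mathcal{M}_0$ is a proper affine sphere with centre at the origin; since $\mathcal{M}_0$ is closed, smooth and strictly convex, the classical rigidity of compact affine spheres forces it to be an origin-centred ellipsoid (see \cite{Calabi}). Conversely, every origin-centred ellipsoid has $K/s^{n+2}$ constant and therefore moves by homothety under \eqref{euclidean}, so the description is complete. The step I would single out as the crux is this compact-affine-sphere rigidity; everything else is the bookkeeping of the scaling weights of $s$ and $K$ and the integration of one separable linear ODE, so apart from invoking that classical theorem I do not anticipate any real obstacle.
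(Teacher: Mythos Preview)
Your argument is correct and follows essentially the same route as the paper: insert the homothetic ansatz $X=h(t)X_0$, use the scaling $s=hs_0$, $K=h^{-n}K_0$ to separate variables, conclude that $K_0/s_0^{\,n+2}$ is constant, and finish with the classical rigidity of compact proper affine spheres (cited as \cite{Calabi} in the paper as well). Your presentation is in fact slightly more streamlined than the paper's, which begins from the soliton equation $\frac{s_0}{2n}\log(K_0/s_0^{n+2})=\langle X_0,x\rangle$ and is thereby led to an extra case split on whether $h+h'=0$; by starting directly from the homothety you bypass that detour, but the substance --- scaling weights, separation, and the Calabi rigidity step you rightly flag as the crux --- is identical.
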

\subsection{The asymptotic behavior of the flow in Euclidean setting}
We begin by providing an interesting example.
\begin{exmp}\label{egellipsoid}
	Suppose that the initial hypersurface $ X_0 $ is an ellipsoid with positive constant equi-affine support function $\rho_0$. Then the solution to \eqref{mainflow1} is a family of ellipsoids given by
	\begin{align*}
		X(\cdot,t)=\rho_0^{\frac{n+2}{2(n+1)}\left(\exp\left(\frac{n+1}{n}t\right)-1\right)}X_0.
	\end{align*}
	Note that $X(\cdot,t)$ converges to the ellipsoid with volume equal to that of $\mathbb{S}^n$ as $t\rightarrow-\infty$.
\end{exmp}
We observe that if the initial ellipsoid $X_0$ has an equi-affine support function $\rho_0$ greater than $1$, then the solution $X(\cdot,t) $ remains to be ellipsoids and the flow \eqref{mainflow1} expands to inﬁnity as $t\rightarrow\infty$. If its equi-affine support function $\rho_0$ is less than $1$, then the flow \eqref{mainflow1} shrinks to a point as $t\rightarrow\infty$.
Therefore, employing the comparison principle, we can conclude that
\begin{prop}\label{roughasy}
	If the initial hypersurface $ X_0 $ is sufficiently small, then the solution $ X(\cdot,t) $ of \eqref{euclidean} will shrink to a point when $ t $ goes to infinity. If the initial hypersurface $ X_0 $ is appropriately large,  then the solution $ X(\cdot,t) $ of \eqref{euclidean} will expand to infinity as $ t $ goes to infinity.
\end{prop}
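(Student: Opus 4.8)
The plan is to trap the solution between two concentric round spheres evolving under the same equation and to conclude with the parabolic comparison principle for \eqref{sevo} invoked in Section \ref{sec-ER}. First I would record the motion of a sphere of radius $a>0$ centred at the origin: its support function is the constant $a$, so $\bar\nabla^2 s\equiv 0$, $\det(\bar\nabla^2 s+sI)=a^n$, and \eqref{sevo} reduces to the ODE $\phi'=\frac{n+1}{n}\phi\log\phi$ (the equality case of the differential inequality appearing in the proof of Lemma \ref{C0}). Integrating with $\phi(0)=a$ gives the centred-sphere solution $\phi(t)=a^{\exp(\frac{n+1}{n}t)}$, which is positive on all of $[0,+\infty)$ and satisfies $\phi(t)\to 0$ as $t\to+\infty$ when $a<1$, and $\phi(t)\to+\infty$ as $t\to+\infty$ when $a>1$. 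A centred ball of radius $a$ has equi-affine support function $\rho_0=a^{(2n+2)/(n+2)}$, so this is consistent with Example \ref{egellipsoid} and the remarks following it, the threshold being $a=1$.

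Next, I read ``$X_0$ sufficiently small'' as: the convex body $\Omega_0$ bounded by $X_0$ contains the origin and lies inside a ball $B_a(0)$ with $a<1$ — this is the appropriate notion since \eqref{euclidean} is not translation invariant, and, e.g., $\mathrm{diam}\,\Omega_0<1$ already forces it. Then $s_0\le a$ on $\mathbb{S}^n$, and $a$ is the support function of $B_a(0)$, whose evolution by Theorem \ref{Uin-Exi} is the shrinking sphere of radius $\phi(t)$ computed above. The comparison principle for \eqref{sevo} yields $s(x,t)\le\phi(t)$ for all $(x,t)\in\mathbb{S}^n\times[0,+\infty)$, hence $\Omega_t\subseteq\bigcap_{|x|=1}\{y:\langle x,y\rangle\le\phi(t)\}=\overline{B_{\phi(t)}(0)}$; since $\Omega_t$ is nonempty with $\mathrm{diam}\,\Omega_t\le 2\phi(t)\to 0$, the hypersurface $X(\cdot,t)$ shrinks to a point as $t\to+\infty$, and an inner comparison with a tiny centred sphere identifies that point as the origin.

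For ``$X_0$ appropriately large'' I read the hypothesis as $B_a(0)\subseteq\Omega_0$ for some $a>1$, so $s_0\ge a$ on $\mathbb{S}^n$; the ball $B_a(0)$ now evolves into the expanding centred sphere of radius $\phi(t)=a^{\exp(\frac{n+1}{n}t)}$, and the comparison principle gives $s(x,t)\ge\phi(t)$, whence $\overline{B_{\phi(t)}(0)}\subseteq\Omega_t$ and $X(\cdot,t)$ expands to infinity as $t\to+\infty$. I do not expect a real obstacle here: the two ingredients — global existence on $[0,+\infty)$ of the solution and of the spherical barriers (Theorem \ref{Uin-Exi}), and the parabolic comparison principle for the uniformly parabolic equation \eqref{sevo} (Section \ref{sec-ER}) — are already in hand. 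The only genuine point of care is that the barriers must be \emph{origin-centred} spheres, since off-centre balls do not evolve self-similarly under \eqref{euclidean}; this is also what pins down the precise meaning of ``sufficiently small'' and ``appropriately large''.
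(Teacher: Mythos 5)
Your argument is correct and is essentially the paper's own: the paper deduces the proposition from the special solutions of Example \ref{egellipsoid} (ellipsoids with constant equi-affine support function, of which your origin-centred spheres $\phi(t)=a^{\exp(\frac{n+1}{n}t)}$ with threshold $a=1$ are a special case) together with the parabolic comparison principle for \eqref{sevo}. Your version merely makes the barrier computation and the inclusion $\Omega_t\subseteq\overline{B_{\phi(t)}(0)}$ (resp. $B_{\phi(t)}(0)\subseteq\Omega_t$) explicit, which is a faithful filling-in of the same proof.
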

\section{Estimates on the centro-affine invariants}\label{sec-UniformE}
In this section, we derive the uniform estimates on centro-affine invariants, which will be used in the next section.
\subsection{The estimates for Tchebychev vector field}
We first provide the uniform estimate for Tchebychev vector field $ T $. Utilizing Proposition \ref{evo}, a direct computation shows
	\begin{align}\label{Tevo}
	\frac{\partial}{\partial t}|T|^2=T^iH_i+2\left(1+\frac{1}{n}\right)|T|^2-C^{ijk}T_iT_jT_k.
\end{align}
To proceed, we want to write this evolution equation in a parabolic form. To achieve this, we compute
\begin{align*}
	\Delta|T|^2&=2T^kg^{ij}T_{k;ij}+2|\nabla T|^2\\
	&=2T^kg^{ij}(T_{i;jk}-R^\alpha_{jki}T_\alpha)+2|\nabla T|^2\\
	&=2T^kg^{ij}[T_{i;jk}-(C_{ij}^pC_{pk}^\alpha-C_{ik}^pC_{jp}^\alpha+g_{ik}\delta_j^\alpha-g_{ij}\delta_k^\alpha)T_\alpha]+2|\nabla T|^2\\
	&=2nT^kH_k-2nT^kT^pT_\alpha C_{pk}^\alpha+2T^kT_\alpha C_k^{jp}C_{jp}^\alpha+2(n-1)|T|^2+2|\nabla T|^2.
\end{align*}
Hence,
	\begin{align}\label{T2pf}
	\frac{\partial}{\partial t}|T|^2=\frac{1}{2n}\Delta|T|^2-\frac{1}{n}|\nabla T|^2+\frac{n+3}{n}|T|^2-\frac{1}{n}T_iT_\alpha C^{ipl}C^{\alpha}_{pl}.
\end{align}
\begin{lem}\label{Tbound}
	Under the flow \eqref{flow}, the quantity  $ |T(\cdot,t)|^2 $ of the hypersurface $\mathcal{M}_t$ is uniformly bounded on $ [0,+\infty) $. More precisely, we have the following inequality
	\begin{eqnarray*}
		|T(\cdot,t)|^2\le\max\left\{\frac{n+3}{n}, \max_{\mathcal{M}_0}|T|^2\right\}, \qquad \forall t \in [0,+\infty).
	\end{eqnarray*}
\end{lem}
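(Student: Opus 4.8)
The plan is to apply the parabolic maximum principle directly to the evolution equation \eqref{T2pf} for $f:=|T|^2$. The equation reads
\begin{equation*}
	\frac{\partial f}{\partial t}=\frac{1}{2n}\Delta f-\frac{1}{n}|\nabla T|^2+\frac{n+3}{n}f-\frac{1}{n}T_iT_\alpha C^{ipl}C^{\alpha}_{pl},
\end{equation*}
so the leading term is a genuine (time-dependent, but uniformly elliptic along the flow by the estimates of Section \ref{sec-ER}) Laplace operator, the gradient term $-\tfrac1n|\nabla T|^2$ has a favorable sign, and the only term that could cause growth is the zeroth-order term $\tfrac{n+3}{n}f$ together with the cubic term $-\tfrac1n T_iT_\alpha C^{ipl}C^{\alpha}_{pl}$. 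First I would examine the sign of the cubic term: the matrix $M^{i\alpha}:=C^{ipl}C^{\alpha}_{pl}$ is symmetric and positive semidefinite (it is a Gram-type contraction of the totally symmetric cubic form with respect to the positive-definite metric $g$), hence $T_iT_\alpha M^{i\alpha}\ge 0$ and consequently $-\tfrac1n T_iT_\alpha C^{ipl}C^{\alpha}_{pl}\le 0$. Discarding this nonpositive term and the nonpositive gradient term, at an interior spatial maximum $x_t$ of $f(\cdot,t)$ we have $\Delta f\le 0$, so along $f_{\max}(t):=\max_{\mathcal{M}_t}f$ the differential inequality
\begin{equation*}
	\frac{d}{dt}f_{\max}(t)\le \frac{n+3}{n}f_{\max}(t)
\end{equation*}
holds in the barrier (or Hamilton's trick) sense.

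This linear differential inequality only gives exponential growth, which is not yet the stated uniform bound; the refinement comes from noticing that the constant in front is exactly $\tfrac{n+3}{n}$, the same number appearing in the claimed bound. The key step is therefore to rule out that $f_{\max}$ can exceed $\tfrac{n+3}{n}$ by a more careful look at the zeroth-order terms. The point is that I should not discard the cubic term so hastily when $f$ is large: one needs a lower bound of the form $T_iT_\alpha C^{ipl}C^{\alpha}_{pl}\ge c\,|T|^4$ for some structural constant, or at least enough to beat $\tfrac{n+3}{n}|T|^2$ once $|T|^2>\tfrac{n+3}{n}$. Concretely, I would argue that whenever $f(x_t,t)>\tfrac{n+3}{n}$, the combination $\tfrac{n+3}{n}f-\tfrac1n T_iT_\alpha C^{ipl}C^{\alpha}_{pl}$ is already $\le 0$ at $x_t$ — this is exactly what is needed for the ODE comparison to yield that $f_{\max}(t)\le\max\{\tfrac{n+3}{n},\max_{\mathcal{M}_0}f\}$ for all $t$. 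If instead the clean inequality $T_iT_\alpha C^{ipl}C^{\alpha}_{pl}\ge (n+3)|T|^2$ fails pointwise, the fallback is to use that the trace identity $T_i=\tfrac1n C^k_{ki}$ together with the Cauchy–Schwarz inequality applied to the symmetric tensor $C$ forces a coercive bound of this type in the regime where $T$ dominates the cubic form.

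The main obstacle I anticipate is precisely establishing the correct algebraic inequality relating $T_iT_\alpha C^{ipl}C^{\alpha}_{pl}$ to $|T|^4$ (or to $\tfrac{n+3}{n}|T|^2$ in the relevant regime) with the sharp constant — this is a pointwise linear-algebra statement about the totally symmetric cubic form $C$ and its Tchebychev trace, and getting the constant right is what makes the bound $\tfrac{n+3}{n}$ (rather than a worse constant or mere exponential growth) come out. Once that inequality is in hand, the rest is a routine application of the scalar maximum principle: set $g(t)=\max\{\tfrac{n+3}{n},\max_{\mathcal{M}_0}|T|^2\}$, observe $g$ is a (constant) supersolution of the differential inequality satisfied by $f_{\max}$, and conclude $|T(\cdot,t)|^2\le g(t)$ on $[0,+\infty)$ by comparison, using the long-time existence from Theorem \ref{Uin-Exi} to know the flow is defined on the whole half-line.
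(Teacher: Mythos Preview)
Your strategy is exactly the paper's: apply the maximum principle to \eqref{T2pf} after extracting a coercive lower bound on the cubic reaction term. The only gap is the one you flag yourself --- the pointwise inequality $T_iT_\alpha C^{ipl}C^{\alpha}_{pl}\ge n|T|^4$ --- and this has a clean resolution that you nearly wrote down. Set $Q_{ij}:=T_pC^p_{ij}$; this is a symmetric $2$-tensor and the cubic term in \eqref{T2pf} is precisely $\tfrac1n|Q|^2$. Its trace is
\[
Q^i_{\ i}=T_p\,g^{ij}C^p_{ij}=T_p\cdot nT^p=n|T|^2,
\]
so Cauchy--Schwarz applied to the eigenvalues of $Q$ (namely $|Q|^2\ge\tfrac1n(\mathrm{tr}\,Q)^2$) gives $|Q|^2\ge n|T|^4$ with the sharp constant. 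Substituting and discarding the gradient term yields
\[
\frac{\partial}{\partial t}|T|^2\le\frac{1}{2n}\Delta|T|^2+\frac{n+3}{n}|T|^2-|T|^4,
\]
and since $\tfrac{n+3}{n}$ is the positive root of $\tfrac{n+3}{n}\beta-\beta^2=0$, the ODE comparison immediately gives the stated bound.

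Your instinct to use the trace identity $T_i=\tfrac1n C^k_{ki}$ together with Cauchy--Schwarz was exactly right; the point you were missing is that Cauchy--Schwarz should be applied to the eigenvalues of the \emph{symmetric matrix} $Q_{ij}=T_pC^p_{ij}$ rather than to the full cubic tensor $C$. Once you see the term as $|Q|^2$ with known trace, the inequality is a one-liner and no ``regime where $T$ dominates $C$'' argument is needed.
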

\begin{proof}
	Let $ Q_{ij}:=T_pC^p_{ij} $. Then \eqref{T2pf} becomes
	\begin{align*}
		\frac{\partial}{\partial t}|T|^2=\frac{1}{2n}\Delta|T|^2-\frac{1}{n}|\nabla T|^2+\frac{n+3}{n}|T|^2-\frac{1}{n}|Q|^2.
	\end{align*}
Since $ n|T|^2=Q_i^i $ and thus Cauchy-Schwarz inequality applied to the eigenvalues of $ Q $ implies $ |Q|^2\ge n|T|^4 $. Then we obtain
	\begin{align}\label{Tinequality}
		\frac{\partial}{\partial t}|T|^2\le\frac{1}{2n}\Delta|T|^2+\frac{n+3}{n}|T|^2-|T|^4.
	\end{align}
	Let $ Z(t):=\max_{\mathcal{M}_t}|T|^2 $. If $ Z(t)\le Z(0) $, the conclusion clearly holds.
	Otherwise, there exists some $ t>0 $ such that $ \beta:=Z(t)>Z(0) $. Suppose that $ \beta $ is achieved for the first time at $ t_0 $. Then we can see that  $ \frac{d}{dt}Z\ge 0 $ at $ t_0 $. Hence, it follows from \eqref{Tinequality} that $ \beta $ satisfies
	\begin{align*}
		\beta\left(\frac{n+3}{n}-\beta\right)\ge 0,
	\end{align*}
	which leads to $ \beta\le\frac{n+3}{n} $. Thus we complete the proof.
\end{proof}
\subsection{Estimates for the cubic form}
Let us now deduce an uniform estimate for cubic form $ C $. We begin by computing the Laplacian of the cubic form $ C $.
\begin{align*}
	\Delta C_{ijk}=&g^{lm}C_{ijk;lm}=g^{lm}C_{ijl;km}\\=&g^{lm}[C_{ijl;mk}-(R_{mki}^\alpha C_{\alpha jl}+R^\alpha_{mkj}C_{i\alpha l}+R^\alpha_{mkl}C_{ij\alpha})]\\
	=&nT_{i;jk}-g^{lm}(R^\alpha_{mki}C_{\alpha jl}+R_{mkj}^\alpha C_{i\alpha l}+R^\alpha_{mkl}C_{ij\alpha})\\
=&nT_{i;jk}-g^{lm}C_{\alpha jl}(C_{im}^pC_{pk}^\alpha-C_{ik}^pC_{pm}^\alpha+g_{ik}\delta_m^\alpha-g_{im}\delta_k^\alpha)\\
&\quad -g^{lm}C_{i\alpha l}(C_{jm}^pC_{pk}^\alpha-C_{jk}^pC_{pm}^\alpha+g_{jk}\delta_m^\alpha-g_{jm}\delta_k^\alpha)\\
&\qquad -g^{lm}C_{ij\alpha}(C_{lm}^pC_{pk}^\alpha-C_{lk}^pC_{pm}^\alpha+g_{lk}\delta_m^\alpha-g_{lm}\delta_k^\alpha)\\
=&nT_{i;jk}-C_{\alpha j}^mC_{im}^pC_{pk}^\alpha+C_{\alpha j}^mC_{ik}^pC_{pm}^\alpha-nT_jg_{ik}+C_{kj}^mg_{im}
	\\&\quad-C_{i\alpha}^mC_{jm}^pC_{pk}^\alpha+C_{i\alpha}^mC_{jk}^pC_{pm}^\alpha-nT_ig_{jk}+C_{ik}^mg_{jm}
	\\&\qquad -g^{lm}C_{ij\alpha}C_{lm}^pC_{pk}^\alpha+g^{lm}C_{ij\alpha}C_{lk}^pC_{pm}^\alpha-\delta_k^m\delta_m^\alpha C_{ij\alpha}+nC_{ijk}\\
  =&nT_{i;jk}-2C_{\alpha j}^mC_{im}^pC_{pk}^\alpha\\
  &\quad +C_{\alpha j}^mC_{ik}^pC_{pm}^\alpha
	+C_{i\alpha}^mC_{jk}^pC_{pm}^\alpha+C_{ij}^\alpha C_{kp}^mC_{m\alpha}^p
	\\&\qquad -nT_ig_{jk}-nT_jg_{ik}-nT_pC_{ij}^\alpha C_{k\alpha}^p+(n+1)C_{ijk}.
\end{align*}
Together with the evolution equation of $ C $ (Proposition \ref{evo} (5)), we obtain
\begin{align}
	\nonumber\frac{\partial}{\partial t}C_{ijk}=&\frac{1}{2n}\Delta C_{ijk}+\frac{1}{n}C_{\alpha j}^mC_{im}^pC_{pk}^\alpha-\frac{n+1}{2n}C_{ijk}\\
	&\nonumber\quad -\frac{1}{2n}(C_{\alpha j}^mC_{ik}^pC_{pm}^\alpha+C_{i\alpha}^mC_{jk}^pC_{pm}^\alpha+C_{ij}^\alpha C_{kp}^mC_{m\alpha}^p)\\
	&\nonumber\qquad +\frac{1}{2}(T_\alpha C_{ik}^pC_{pj}^\alpha+T_\alpha C_{jk}^pC_{ip}^\alpha+T_pC_{ij}^\alpha C_{k\alpha}^p)\\
	&\label{n=0e2}\qquad \quad +T_ig_{jk}+T_jg_{ik}+T_kg_{ij}.
\end{align}
Now we proceed to represent $ |C|^2 $ in a parabolic form. To do this, we  calculate:
\begin{align*}
	\frac{\partial}{\partial t}|C|^2=&\frac{\partial}{\partial t}(C_{ijk}g^{iq}g^{jr}g^{ks}C_{qrs})=3C_{ijk}\left(\frac{\partial}{\partial t}g^{iq}\right)C_q^{jk}+2\left(\frac{\partial}{\partial t}C_{ijk}\right)C^{ijk}
	\\
	=&-3T_pC^{piq}C_{ijk}C_q^{jk}+2C^{ijk}\left[\frac{1}{2n}\Delta C_{ijk}+\frac{1}{n}C_{\alpha j}^mC_{im}^pC_{pk}^\alpha\right.\\
	&\quad -\frac{n+1}{2n}C_{ijk}-\frac{1}{2n}(C_{\alpha j}^mC_{ik}^pC_{pm}^\alpha+C_{i\alpha}^mC_{jk}^pC_{pm}^\alpha+C_{ij}^\alpha C_{kp}^mC_{m\alpha}^p)\\
	&\qquad \left.+\frac{1}{2}(T_\alpha C_{ik}^pC_{pj}^\alpha+T_\alpha C_{jk}^pC_{ip}^\alpha+T_pC_{ij}^\alpha C_{k\alpha}^p)+T_ig_{jk}+T_jg_{ik}+T_kg_{ij}\right]\\
	=&\frac{1}{n}C^{ijk}\Delta C_{ijk}+\frac{2}{n}C^{ijk}C_{\alpha j}^mC_{im}^pC_{pk}^\alpha-\frac{n+1}{2n}|C|^2+6n|T|^2\\
&\qquad -\frac{1}{n}C^{ijk}(C_{\alpha j}^mC_{ik}^pC_{pm}^\alpha+C_{i\alpha}^mC_{jk}^pC_{pm}^\alpha+C_{ij}^\alpha C_{kp}^mC_{m\alpha}^p).
\end{align*}
Let $ P_{ij}:=C_{il}^kC_{jk}^l $, and note that $\Delta|C|^2=2\Delta C_{ijk}C^{ijk}+2|\nabla C|^2$, we arrive at
\begin{align*}
	\frac{\partial}{\partial t}|C|^2=&\frac{1}{2n}\Delta|C|^2-\frac{1}{n}|\nabla C|^2+\frac{2}{n}C^{ijk}C_{\alpha j}^mC_{im}^pC_{pk}^\alpha
	\\&-\frac{n+1}{n}|C|^2+6n|T|^2-\frac{3}{n}|P|^2.
\end{align*}
Let $ Q_{ijkl}:=C_{ij}^mC_{klm}-C_{ik}^mC_{jlm} $, we find
\begin{align*}
	0\le\frac{1}{2}|Q|^2=|P|^2-C_{il}^mC_{km}^pC_{pj}^lC^{ijk},
\end{align*}
and so
\begin{align*}
	\frac{\partial}{\partial t}|C|^2&\le\frac{1}{2n}\Delta|C|^2+\frac{2}{n}C^{ijk}C_{\alpha j}^mC_{im}^pC_{pk}^\alpha-\frac{3}{n}|P|^2+6n|T|^2\\
	&\le\frac{1}{2n}\Delta|C|^2-\frac{1}{n}|P|^2+6n|T|^2
	\\&\le\frac{1}{2n}\Delta|C|^2-\frac{1}{n^2}|C|^4+6n|T|^2,
\end{align*}
since $ |C|^2=P^i_i $, and thus Cauchy-Schwartz inequality applied to the eigenvalues of $ P $ implies $ |P|^2\ge\frac{1}{n}|C|^4 $. By the maximum principle, this implies the following inequality for $ \rho(t):=\max_{\mathcal{M}_t}|C|^2 $
\begin{align*}
	\frac{d\rho}{d t}\le-\frac{1}{n^2}\rho^2+6n|T|^2.
\end{align*}
From Lemma \ref{Tbound}, we know that $ |T|^2 $ is uniformly bounded on $ [0,+\infty) $. Hence the behavior of $ \rho $ when large is modeled by the differential inequality
\begin{align}\label{din}
	\frac{d\rho}{d t}\le-\frac{1}{n^2}\rho^2.
\end{align}
We thus deduce that
\begin{align*}
	\rho(t)\le L+\left(\frac{1}{\rho(0)}+\frac{1}{n^2}t\right)^{-1}, \quad \forall t\in (0,+\infty)
\end{align*}
for some positive constants $ L$.
From this, we can conclude that
\begin{lem}\label{Cbound}
	Under the flow \eqref{flow}, the quantity  $ |C(\cdot,t)|^2 $ of the hypersurface $ \mathcal{M}_t $ is uniformly bounded on $ [0,+\infty) $.
\end{lem}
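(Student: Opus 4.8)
The plan is to read off the uniform bound directly from the dissipative differential inequality for $|C|^2$ established in the computation above, feeding in the uniform control on $|T|^2$ from Lemma \ref{Tbound} and then running a one–variable ODE comparison.

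First I would record what the preceding computation produces: combining the Bochner-type formula for $\Delta C_{ijk}$, the evolution equation of $C_{ijk}$ from Proposition \ref{evo}(5), and the two Cauchy–Schwarz estimates $|P|^2\ge\frac1n|C|^4$ (from the eigenvalues of $P_{ij}=C^k_{il}C^l_{jk}$, using $|C|^2=P^i_i$) and $|P|^2\ge C^m_{il}C^p_{km}C^l_{pj}C^{ijk}$ (from $|Q|^2\ge 0$ with $Q_{ijkl}=C^m_{ij}C_{klm}-C^m_{ik}C_{jlm}$), one obtains, on $\mathcal{M}_t\times[0,+\infty)$,
\[
\frac{\partial}{\partial t}|C|^2 \;\le\; \frac{1}{2n}\Delta|C|^2 \;-\; \frac{1}{n^2}|C|^4 \;+\; 6n\,|T|^2 .
\]
By Lemma \ref{Tbound} we have $|T|^2\le M$ everywhere, with $M=\max\{(n+3)/n,\ \max_{\mathcal{M}_0}|T|^2\}$, so the inhomogeneous term is a fixed constant. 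Applying Hamilton's maximum principle to the locally Lipschitz function $\rho(t):=\max_{\mathcal{M}_t}|C|^2$, I would deduce that its upper right Dini derivative satisfies $\frac{d}{dt}\rho\le-\frac{1}{n^2}\rho^2+6nM$ at every time.

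It then remains to observe that the scalar ODE $y'=-\frac{1}{n^2}y^2+6nM$ is dissipative: its positive equilibrium $\rho_\ast:=\sqrt{6n^3M}$ is globally attracting, and whenever $\rho(t)>\rho_\ast$ one has $\frac{d}{dt}\rho(t)<0$. A routine continuity argument (or a direct comparison with the solution of the scalar ODE with the same initial value) then gives $\rho(t)\le\max\{\rho(0),\rho_\ast\}$ for all $t\ge0$; equivalently, once $\rho$ exceeds $\rho_\ast$ the quartic term dominates, $\frac{d}{dt}\rho\le-c\,\rho^2$ for a constant $c=c(n)>0$, which forces $1/t$–type decay and hence $\rho(t)\le L+(\rho(0)^{-1}+c\,t)^{-1}$ for a constant $L=L(n,M)$. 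Since $\rho(0)=\max_{\mathcal{M}_0}|C|^2<\infty$, this is exactly the asserted uniform bound on $[0,+\infty)$. The genuinely technical content — the sign and coefficient bookkeeping in $\Delta C_{ijk}$ and the two Cauchy–Schwarz inequalities that clean it up into the quartic term — has already been carried out above, so I do not expect a real obstacle in this last step; the whole point of the lemma is that the cubic form is self-damping through the $-\frac{1}{n^2}|C|^4$ term, which pins $|C|^2$ down regardless of how large it starts.
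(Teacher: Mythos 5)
Your proposal is correct and follows essentially the same route as the paper: the parabolic inequality $\frac{\partial}{\partial t}|C|^2\le\frac{1}{2n}\Delta|C|^2-\frac{1}{n^2}|C|^4+6n|T|^2$ from the Bochner computation and the two Cauchy--Schwarz estimates, combined with the uniform bound on $|T|^2$ from Lemma \ref{Tbound} and a maximum-principle/ODE comparison for $\rho(t)=\max_{\mathcal{M}_t}|C|^2$, which is exactly how the paper deduces $\rho(t)\le L+\bigl(\rho(0)^{-1}+t/n^2\bigr)^{-1}$. Your explicit equilibrium value $\rho_\ast=\sqrt{6n^3M}$ is just a slightly more quantitative packaging of the same dissipativity argument.
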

\subsection{Higher derivatives of the cubic form}\label{hdc}
In this subsection, we will derive the higher order derivative estimates for the cubic form $ C $ using maximum principle arguments. When dealing with two tensors $A$ and $B$, we denote $ A*B $ as a linear combination of tensors formed by contraction on $ A $ and $ B $ by centro-affine metric $ g $. Furthermore, we will represent the $m$th iterated covariant derivative of a tensor $A$ as $ \nabla^mA $.

We aim to derive the evolution equation for the $ m $th covariant derivative $ \nabla^mC $ of the cubic form $ C $. To this end, we find the following lemma to be helpful.
\begin{lem}
	If $ A $ and $ B $ are tensors satisfying the evolution equation
	\begin{align*}
		\frac{\partial}{\partial t}A=\Delta A+B,
	\end{align*}
then the covariant derivative $ \nabla A $ satisfies an equation of the form
\begin{align*}
	\frac{\partial}{\partial t}\nabla A=\Delta\nabla A+C*\nabla C*A+C*C*\nabla A+\nabla B.
\end{align*}
\end{lem}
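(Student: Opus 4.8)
The plan is to prove the commutation formula by a direct computation, carefully tracking the curvature terms that arise when the covariant derivative $\nabla$ is commuted past the Laplacian $\Delta = g^{lm}\nabla_l\nabla_m$. First I would write, for any tensor $A$,
\[
\nabla_p\Delta A = \nabla_p\bigl(g^{lm}\nabla_l\nabla_m A\bigr) = g^{lm}\nabla_p\nabla_l\nabla_m A,
\]
using that $g$ is parallel, and then commute the leading $\nabla_p$ with $\nabla_l$ to obtain $g^{lm}\nabla_l\nabla_p\nabla_m A$ plus a term involving $R_{pl}\cdot\nabla A$ (schematically, the Riemann tensor acting on the one extra slot coming from $\nabla_p$ and on the free indices of $A$). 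A second commutation of $\nabla_p$ with $\nabla_m$ inside produces $g^{lm}\nabla_l\nabla_m\nabla_p A = \Delta\nabla A$ plus terms of the form $R\cdot\nabla A$ and $(\nabla R)\cdot A$. Collecting everything gives
\[
\frac{\partial}{\partial t}\nabla A = \nabla\Bigl(\frac{\partial}{\partial t}A\Bigr) + (\text{connection-variation terms}) = \Delta\nabla A + R*\nabla A + (\nabla R)*A + \nabla B,
\]
where the first equality also uses that $\frac{\partial}{\partial t}\Gamma$ is itself a tensor expressible through $\frac{\partial}{\partial t}g = T_pC^p_{ij}$ (Proposition \ref{evo}(1)), hence of the schematic form $\nabla(T*C)=T*\nabla C + C*\nabla T$, contributing further $C*\nabla C*A$ and $C*C*\nabla A$ type terms after contraction with $A$.

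The key remaining step is to rewrite every curvature expression $R*\nabla A$ and $(\nabla R)*A$ purely in terms of the cubic form. Here I would invoke the centro-affine Gauss equation already used in Section \ref{sec-UniformE}, namely
\[
R^\alpha_{ijk} = C_{ik}^pC_{pj}^\alpha - C_{ij}^pC_{pk}^\alpha + g_{ik}\delta_j^\alpha - g_{ij}\delta_k^\alpha,
\]
so that $R = C*C + g*g$ schematically, and consequently $\nabla R = C*\nabla C$ (the $g*g$ part is parallel and drops out). Substituting these, $R*\nabla A$ becomes $C*C*\nabla A + g*g*\nabla A$; the $g*g*\nabla A$ piece is just a constant-coefficient contraction of $\nabla A$, which, together with the $-\frac{n+1}{2n}$-type zeroth-order term already present in the evolution of $C$ and absorbed into ``$B$'', can be folded into the lower-order structure — but since the lemma's statement only claims an equation ``of the form'' with the displayed $C*\ldots$ terms and $\nabla B$, I should check that such purely metric contractions of $\nabla A$ are either harmless or should be understood as part of $\nabla B$ (they are not, since $B$ is the original inhomogeneity). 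The cleanest route is to observe that the lemma will be applied with $A=C$ (or an iterate $\nabla^mC$), and in that setting the term $g*g*\nabla A = g*g*\nabla C$ is genuinely of the form $C*\ldots$? No — it is not. So the honest statement is that metric contractions of $\nabla A$ of \emph{bounded} coefficient appear; I expect the paper intends the $*$-notation loosely enough, or intends $B$ to already include such terms. I would therefore state and prove the slightly more precise identity
\[
\frac{\partial}{\partial t}\nabla A = \Delta\nabla A + C*\nabla C*A + C*C*\nabla A + g*\nabla A + \nabla B,
\]
and note that the $g*\nabla A$ term is a zeroth-order (in derivative count) contraction with constant coefficients, hence absorbable for all the estimates that follow.

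The main obstacle is purely bookkeeping: correctly enumerating the curvature terms from the \emph{two} successive commutations (one producing $R*\nabla A$, one producing $(\nabla R)*A$ after a further derivative hits $R$), and then verifying that after substituting the Gauss equation $R=C*C+g*g$ and its derivative $\nabla R=C*\nabla C$, every resulting term collapses into one of the advertised schematic types $C*\nabla C*A$, $C*C*\nabla A$, $g*\nabla A$. No genuinely hard analysis is involved; the care needed is in not dropping or double-counting a contraction, and in confirming that the time-derivative-of-connection contribution (driven by $\partial_t g = T*C$) is dominated by the same schematic types — indeed $T=\frac1n\mathrm{trace}_g C$, so $T*\bullet$ is a special case of $C*\bullet$, and its single derivative $\nabla(T*C)$ is of the form $C*\nabla C$, consistent with the claim. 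Once the schematic identity is in hand, iterating it $m$ times gives the evolution equation for $\nabla^mC$ needed in the higher-derivative estimates of Subsection \ref{hdc}.
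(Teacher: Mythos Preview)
Your approach is the paper's: first compute $\partial_t\Gamma_{ij}^k = C*\nabla C$ from the variation of $g$, then commute $\nabla$ past $\Delta$ and rewrite the resulting curvature terms via the centro-affine Gauss equation $R = C*C + g*g$. The paper's proof simply asserts the commutation identity $\nabla\Delta A = \Delta\nabla A + C*\nabla C*A + C*C*\nabla A$ without unpacking it; you have done so, and your observation that the $g*g$ piece of $R$ leaves a residual bounded-coefficient $g*\nabla A$ term not literally of the advertised form $C*C*\nabla A$ is correct---the paper's statement is slightly loose on this point. It is harmless downstream: in the only application (the inductive evolution of $\nabla^mC$), the extra term merely shifts the constant in front of the ``$+\nabla^mC$'' already appearing, and the subsequent maximum-principle estimates are unaffected. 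Your proposed fix of recording $g*\nabla A$ explicitly is the right way to be precise; the back-and-forth in your write-up about whether it can be folded into $\nabla B$ should be cut---you correctly conclude it cannot, so just state the amended identity and proceed.
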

\begin{proof}
	The covariant derivative $ \nabla $ involves the Christoffel symbols $ \Gamma_{ij}^k $. In view of Proposition \ref{evo}, their time derivatives are
	\begin{align*}
	\frac{\partial}{\partial t}\Gamma_{ij}^k=\frac{1}{2}g^{kl}\left\{\nabla_j\left(\frac{\partial}{\partial t}g_{il}\right)+\nabla_i\left(\frac{\partial}{\partial t}g_{jl}\right)-\nabla_l\left(\frac{\partial}{\partial t}g_{ij}\right)\right\}=C*\nabla C.
	\end{align*}
Thus
\begin{align*}
	\frac{\partial}{\partial t}\nabla A=\nabla\left(\frac{\partial}{\partial t}A\right)+C*\nabla C*A.
\end{align*}
Then interchanging derivatives yields
\begin{align*}
	\nabla\Delta A=\Delta\nabla A+C*\nabla C*A+C*C*\nabla A,
\end{align*}
and this completes the proof.
\end{proof}
\begin{prop}
	The $ m $th covariant derivative $ \nabla^mC $ of the cubic form $ C $ satisfies an evolution equation of the form
	\begin{align*}
		\frac{\partial}{\partial t}\nabla^mC=\frac{1}{2n}\Delta\nabla^mC+\nabla^mC+\sum_{i+j+k=m}^{}\nabla^iC*\nabla^jC*\nabla^kC.
	\end{align*}
\end{prop}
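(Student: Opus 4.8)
The plan is to prove this by induction on $m$, using the previous lemma as the inductive step and the already-established evolution equation \eqref{n=0e2} for $C$ itself as the base case. First I would observe that \eqref{n=0e2} can be written compactly in the $*$-notation as
\begin{align*}
	\frac{\partial}{\partial t}C=\frac{1}{2n}\Delta C-\frac{n+1}{2n}C+C*C*C+T*g,
\end{align*}
where the term $T*g$ collects $T_ig_{jk}+T_jg_{ik}+T_kg_{ij}$. Since $T=\frac{1}{n}\mathrm{trace}_g(C)$ is itself a contraction of $C$, the inhomogeneous term $T*g$ is of the form $C*g*g$, which in the $*$-convention (contractions with the metric being absorbed) is simply $C$ up to a constant. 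Hence the base case already has the asserted shape: a heat part $\tfrac{1}{2n}\Delta C$, a zeroth-order linear term proportional to $C$ (absorbing both $-\tfrac{n+1}{2n}C$ and the $T*g$ contribution), and a cubic term $\sum_{i+j+k=0}\nabla^iC*\nabla^jC*\nabla^kC = C*C*C$.

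Next, assuming the evolution equation holds for $\nabla^mC$ in the stated form, I would apply the previous lemma with $A=\nabla^mC$ and
\begin{align*}
	B = \tfrac{1}{2}\nabla^mC + \sum_{i+j+k=m}\nabla^iC*\nabla^jC*\nabla^kC,
\end{align*}
after first rescaling time (or absorbing the factor $\tfrac{1}{2n}$; the lemma is stated with $\partial_t A=\Delta A + B$ but the constant in front of the Laplacian is harmless — one simply carries $\tfrac{1}{2n}$ through, and all error terms the lemma produces are of the same schematic type). The lemma then gives
\begin{align*}
	\frac{\partial}{\partial t}\nabla^{m+1}C = \frac{1}{2n}\Delta\nabla^{m+1}C + C*\nabla C*\nabla^mC + C*C*\nabla^{m+1}C + \nabla B.
\end{align*}
It remains to check that every term on the right except the heat term and a single copy of $\nabla^{m+1}C$ is a sum of triple $*$-products $\nabla^iC*\nabla^jC*\nabla^kC$ with $i+j+k=m+1$. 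The terms $C*\nabla C*\nabla^mC$ and $C*C*\nabla^{m+1}C$ manifestly have this form (with index sums $0+1+m$ and $0+0+(m+1)$). For $\nabla B$: the piece $\nabla(\tfrac{1}{2}\nabla^mC)=\tfrac12\nabla^{m+1}C$ is the linear term, and the piece $\nabla\bigl(\sum_{i+j+k=m}\nabla^iC*\nabla^jC*\nabla^kC\bigr)$ distributes by the Leibniz rule into $\sum_{i+j+k=m+1}\nabla^iC*\nabla^jC*\nabla^kC$, since differentiating one factor raises its order by one and leaves the total equal to $m+1$.

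The main obstacle — really the only place requiring care rather than bookkeeping — is making sure the $*$-notation genuinely closes under all the operations invoked: commuting $\nabla$ past $\Delta$ produces curvature terms, and the curvature of the centro-affine metric is (by the Gauss equation computation used earlier for $\Delta C_{ijk}$) a polynomial in $C$ and $g$, hence of type $C*C$ plus a constant-coefficient metric term; this is exactly why the lemma's error terms are $C*\nabla C*A$ and $C*C*\nabla A$ and why no genuinely new structures appear. One should also note that the zeroth-order constant-coefficient terms (coming from the $+1$ in the scalar curvature and from $T*g$) accumulate across the induction only as scalar multiples of $\nabla^{m+1}C$, so they can always be written as the single summand $\nabla^mC$ (with an $m$-dependent constant suppressed in the statement, as is standard). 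Once these schematic facts are in hand, the induction is immediate and the proposition follows.
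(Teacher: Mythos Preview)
Your proposal is correct and follows essentially the same approach as the paper: induction on $m$ with the base case read off from \eqref{n=0e2} (using that $T$ is a metric trace of $C$, so $T*g$ is schematically $C$) and the inductive step supplied by the previous lemma, after which one just checks that the commutator and Leibniz terms all land in $\sum_{i+j+k=m+1}\nabla^iC*\nabla^jC*\nabla^kC$. The stray $\tfrac12$ you wrote in $B$ is a slip but harmless, since as you note the linear coefficient is only tracked schematically in the $*$-notation.
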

\begin{proof}
	In the case of $ m=0 $ it holds true by \eqref{n=0e2}, that is,
	\begin{align*}
		\frac{\partial}{\partial t}C=\frac{1}{2n}\Delta C+C+C*C*C.
	\end{align*}
We proceed by induction on $ m $. Employing the previous lemma, we immediately have
\begin{align*}
	\frac{\partial}{\partial t}\nabla^mC=&\Delta\nabla^mC+C*\nabla C*\nabla^{m-1}C+C*C*\nabla^mC\\
	&\qquad +\nabla^mC+\sum_{i+j+k=m}^{}\nabla^iC*\nabla^jC*\nabla^kC\\
	=&\Delta\nabla^mC+\nabla^mC+\sum_{i+j+k=m}^{}\nabla^iC*\nabla^jC*\nabla^kC,
\end{align*}
This completes the induction.
\end{proof}
\begin{cor}\label{mevo}
	For any $ m\ge0 $ we have the evolution equation
	\begin{align*}
		\frac{\partial}{\partial t}|\nabla^mC|^2=&\frac{1}{2n}\Delta|\nabla^mC|^2-\frac{1}{n}|\nabla^{m+1}C|^2\\
		&+\nabla^mC*\nabla^mC+\sum_{i+j+k=m}^{}\nabla^iC*\nabla^jC*\nabla^kC*\nabla^mC.
	\end{align*}
\end{cor}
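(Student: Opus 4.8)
The plan is to differentiate $|\nabla^mC|^2 = g^{a_1b_1}\cdots g^{a_{m+3}b_{m+3}}(\nabla^mC)_{a_1\cdots a_{m+3}}(\nabla^mC)_{b_1\cdots b_{m+3}}$ with respect to $t$, substituting the evolution equation for $\nabla^mC$ from the preceding proposition together with the evolution of the inverse metric from Proposition \ref{evo}(2), and then reorganizing every resulting term into the schematic form on the right-hand side. The two leading terms will come from the principal part $\tfrac{1}{2n}\Delta\nabla^mC$ paired with $\nabla^mC$: since $\nabla$ is the Levi-Civita connection of $g(t)$ at each fixed time it is metric-compatible, so the Bochner-type identity $\Delta|\nabla^mC|^2 = 2\langle\Delta\nabla^mC,\nabla^mC\rangle + 2|\nabla^{m+1}C|^2$ holds, and it rewrites $\tfrac{1}{n}\langle\Delta\nabla^mC,\nabla^mC\rangle$ as $\tfrac{1}{2n}\Delta|\nabla^mC|^2 - \tfrac{1}{n}|\nabla^{m+1}C|^2$.

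First I would treat the terms produced by $\partial_t g^{ab}$: by Proposition \ref{evo}(2), $\partial_t g^{ab} = -T_pC^{pab}$, and since $T_p = \tfrac{1}{n}C^k_{kp}$ this is schematically $C*C$; contracting it against the two copies of $\nabla^mC$ (the remaining indices being contracted by $g$) gives a term of the form $C*C*\nabla^mC*\nabla^mC$, which is the instance $(i,j,k)=(0,0,m)$ of the sum $\sum_{i+j+k=m}\nabla^iC*\nabla^jC*\nabla^kC*\nabla^mC$. Next I would plug in
\[
\partial_t\nabla^mC = \tfrac{1}{2n}\Delta\nabla^mC + \nabla^mC + \sum_{i+j+k=m}\nabla^iC*\nabla^jC*\nabla^kC
\]
and pair it with $\nabla^mC$: the middle term contributes exactly $\nabla^mC*\nabla^mC$, the sum contributes $\sum_{i+j+k=m}\nabla^iC*\nabla^jC*\nabla^kC*\nabla^mC$, and the Laplacian term is handled by the Bochner identity above. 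Collecting these pieces and absorbing all lower-order contractions into the $*$-notation yields the claimed equation. (As a consistency check, for $m=0$ this reproduces the evolution of $|C|^2$ already computed, after rewriting its polynomial terms $-\tfrac{n+1}{n}|C|^2$, $6n|T|^2$, $\tfrac{2}{n}C^{ijk}C^m_{\alpha j}C^p_{im}C^\alpha_{pk}$ and $-\tfrac{3}{n}|P|^2$ schematically as $C*C$ and $C*C*C*C$.)

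There is no essential analytic difficulty here; the only point requiring care is the bookkeeping of derivative orders, i.e. verifying that every term generated is either $\nabla^mC*\nabla^mC$ or lies in $\sum_{i+j+k=m}\nabla^iC*\nabla^jC*\nabla^kC*\nabla^mC$. The subtlety worth flagging is that the metric-derivative terms, although at first glance they seem to carry "too many" derivatives (two factors $\nabla^mC$ plus a factor $C*C$), in fact fall into the schematic sum precisely because the uncontracted pair of $\nabla^mC$'s already accounts for one $\nabla^mC$ factor while $\partial_t g^{ab}\sim C*C$ has derivative order zero, so the orders still sum to $m$. This is the step most prone to index and sign slips, but it is otherwise routine.
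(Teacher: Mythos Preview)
Your proposal is correct and follows essentially the same approach as the paper's proof: differentiate $|\nabla^mC|^2$ in $t$, absorb the $\partial_t g^{ab}=C*C$ contributions into the $(i,j,k)=(0,0,m)$ term of the sum, substitute the previous proposition's evolution equation for $\nabla^mC$, and convert $\tfrac{1}{n}\langle\Delta\nabla^mC,\nabla^mC\rangle$ via the Bochner identity $\Delta|\nabla^mC|^2=2\langle\Delta\nabla^mC,\nabla^mC\rangle+2|\nabla^{m+1}C|^2$. The paper's argument is exactly this, only stated more tersely.
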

\begin{proof}
This follows from the previous proposition. Indeed, We have
\begin{align*}
	\frac{\partial}{\partial t}|\nabla^mC|^2=2\left<\nabla^mC,\frac{\partial}{\partial t}\nabla^mC\right>+C*C*\nabla^mC*\nabla^mC,
\end{align*}
where the extra terms come from the variation of $ g^{ij} $ defining the norm $ |\cdot|$. On the other hand, a direct computation gives
\begin{align*}
	\Delta|\nabla^mC|^2=2\left<\nabla^mC,\Delta\nabla^mC\right>+2|\nabla^{m+1}C|^2,
\end{align*}
and the result follows.
\end{proof}
\begin{prop}\label{Cmbound}
	For each $ m\ge0 $ there is a positive constant $ L_m $ such that
	\begin{align*}
		|\nabla^mC|^2\le L_m
	\end{align*}
uniformly on $ [0,+\infty) $.
\end{prop}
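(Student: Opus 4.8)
The plan is to prove Proposition \ref{Cmbound} by induction on $m$, using the evolution equation from Corollary \ref{mevo} together with the maximum principle and an interpolation-type device. The base case $m=0$ is exactly Lemma \ref{Cbound}, which is already established. For the inductive step, I assume that $|C|^2, |\nabla C|^2, \dots, |\nabla^{m-1}C|^2$ are all uniformly bounded on $[0,+\infty)$ by constants depending only on $n$ and the initial data, and I aim to bound $|\nabla^m C|^2$.

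\textbf{The inductive step.} From Corollary \ref{mevo}, the quantity $f_m := |\nabla^m C|^2$ satisfies
\begin{align*}
\frac{\partial}{\partial t} f_m = \frac{1}{2n}\Delta f_m - \frac{1}{n}|\nabla^{m+1}C|^2 + \nabla^m C * \nabla^m C + \sum_{i+j+k=m} \nabla^i C * \nabla^j C * \nabla^k C * \nabla^m C.
\end{align*}
The term $\nabla^m C * \nabla^m C$ contributes at most $c\, f_m$ for a constant $c = c(n)$. In the cubic sum, the only term where all three factors could have top order $m$ would require two of $i,j,k$ to vanish, giving a factor $C * C * \nabla^m C * \nabla^m C$, which is again bounded by $c\,|C|^2 f_m \le c' f_m$ using the base case. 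Every other term in the sum has at most one factor of order $m$ (namely the explicit $\nabla^m C$), with the remaining factors of order $\le m-1$, hence bounded by the inductive hypothesis; so that term is bounded by $c'' \sqrt{f_m} \le c''(1 + f_m)$ after Cauchy--Schwarz, or more crudely absorbed. Discarding the good negative term $-\frac{1}{n}|\nabla^{m+1}C|^2 \le 0$, we obtain a differential inequality of the form $\frac{\partial}{\partial t} f_m \le \frac{1}{2n}\Delta f_m + A f_m + B$ with $A, B$ depending only on $n$ and the lower-order bounds. This alone yields only exponential growth, which is not good enough for a \emph{uniform} bound on $[0,+\infty)$, so an additional idea is needed.

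\textbf{Overcoming the exponential-growth obstacle.} This is the main obstacle, and I would resolve it by the standard trick of coupling $f_m$ with a lower-order quantity that carries a favorable reaction term. Concretely, following the approach used for the Pick invariant in Lemma \ref{Cbound}, one has from Corollary \ref{mevo} with $m-1$ in place of $m$ that $f_{m-1} := |\nabla^{m-1}C|^2$ satisfies $\frac{\partial}{\partial t} f_{m-1} \le \frac{1}{2n}\Delta f_{m-1} - \frac{1}{n}|\nabla^m C|^2 + (\text{bounded terms})$, so $f_{m-1}$ supplies a \emph{negative} multiple of $f_m$ through its own evolution. One then considers the auxiliary function $G := f_m + N f_{m-1}$ (or a product such as $f_m \cdot (1 + f_{m-1})$, whichever keeps the algebra cleanest) for a large constant $N = N(n)$ chosen so that the $-\frac{N}{n}f_m$ coming from $f_{m-1}$ dominates the $A f_m$ produced above. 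The resulting inequality for $G$ is of the form $\frac{\partial}{\partial t} G \le \frac{1}{2n}\Delta G - \delta f_m + B'$, and since $G \le f_m + N L_{m-1}$, at an interior spatial maximum of $G$ on $\mathcal{M}_t$ the maximum principle gives $\frac{d}{dt}\max_{\mathcal{M}_t} G \le -\delta \max_{\mathcal{M}_t} G + B''$, which forces $\max_{\mathcal{M}_t} G$, hence $\max_{\mathcal{M}_t} f_m$, to stay bounded by $\max\{\max_{\mathcal{M}_0} G,\ B''/\delta\}$ for all $t \ge 0$. Setting $L_m$ equal to this bound completes the induction. I expect the only delicate bookkeeping to be verifying that every term in the cubic sum with a factor of order exactly $m$ indeed has its remaining two factors of order $\le m-1$ (true since $i+j+k=m$ with one index equal to $m$ forces the other two to be $0$, hence order $0$, covered by the base case), so that no genuinely new top-order term escapes the estimate.
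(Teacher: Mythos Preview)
Your proof is correct and rests on the same core idea as the paper: couple $|\nabla^m C|^2$ with a large multiple of $|\nabla^{m-1}C|^2$ so that the good term $-\tfrac{1}{n}|\nabla^m C|^2$ in the evolution of the lower-order quantity absorbs the bad linear reaction $Af_m$ coming from the top-order one. The difference is purely tactical. You run the ODE $\tfrac{d}{dt}\max G \le -\delta \max G + B''$ globally on $[0,\infty)$ and anchor it to $\max_{\mathcal{M}_0} G$, which is finite because $\mathcal{M}_0$ is smooth and closed. The paper instead inserts a time factor and works with $f=\tilde t\,|\nabla^{m+1}C|^2 + N|\nabla^m C|^2$ on each unit interval $[t_0,t_0+1]$; the factor $\tilde t$ kills the top-order contribution at $t=t_0$, so the maximum principle gives $|\nabla^{m+1}C|^2\le L_{33}+NL_m$ at $t_0+1$ without any reference to derivatives of $C$ on $\mathcal{M}_0$. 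Your argument is slightly shorter for this proposition; the paper's interior-in-time (Bernstein/Shi-type) version has the advantage that it transplants verbatim to the eternal-solution setting of Proposition~\ref{Cm-bound}, where there is no initial time to anchor to.
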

\begin{proof}
	In view of Lemma \ref{Cbound} we know that $ |C|^2=|\nabla^0C|^2 $ are uniformly bounded and we prove Proposition \ref{Cmbound} by induction on $ m $. Suppose $ |\nabla^{\bar{m}}C|^2 $ is uniformly bounded by $ L_m $ for all $ \bar{m}\le m $. Let $ t_0\in[0,+\infty) $ and we put $ \tilde{t}=t-t_0 $ for $ t\in[t_0,t_0+1] $ so that $ \tilde{t}\in[0,1] $. Now let us consider the function
	\begin{align*}
		f:=\tilde{t}|\nabla^{m+1}C|^2+N|\nabla^mC|^2,
	\end{align*}
where $ N $ is a positive constant we shall choose in a minute. The evolution equation for $ f $ follows from Corollary \ref{mevo}
\begin{align*}
	\frac{\partial}{\partial t}f=&|\nabla^{m+1}C|^2+\frac{1}{2n}\Delta f+\tilde{t}\left(-\frac{1}{n}|\nabla^{m+2}C|^2+\nabla^{m+1}C*\nabla^{m+1}C\right.\\
	&\;\;\left.+\sum_{i+j+k=m+1}^{}\nabla^iC*\nabla^jC*\nabla^kC*\nabla^{m+1}C\right)+N\left(-\frac{1}{n}|\nabla^{m+1}C|^2\right.\\
	&\qquad \left.+\nabla^{m}C*\nabla^{m}C+\sum_{i+j+k=m}^{}\nabla^iC*\nabla^jC*\nabla^kC*\nabla^{m}C\right).
\end{align*}
Since $ \tilde{t}\le1 $, we can deduce from the induction hypothesis that
\begin{align*}
		\frac{\partial}{\partial t}f\le\frac{1}{2n}\Delta f+L_{11}(1+|\nabla^{m+1}C|^2)-\frac{N}{n}|\nabla^{m+1}C|^2+NL_{22},
\end{align*}
where $ L_{11} $ and $ L_{22} $ are constants depending on $ L_m $. Now we choose an appropriate constant $ N $ such that
\begin{align*}
L_{11}	-\frac{N}{n}\le 0.
\end{align*}
Then
\begin{align*}
	\frac{\partial}{\partial t}f\le\frac{1}{2n}\Delta f+L_{33},
\end{align*}
where $ L_{33} $ is a constant depending on $ L_m $. Note that $ f\le NL_m $ at $ t=t_0 $. Hence by the maximum principle
\begin{align*}
	f\le L_{33}+NL_m
\end{align*}
on $[t_0,t_0+1] $.
It follows that
\begin{align*}
	|\nabla^{m+1}C|^2\le L_{33}+NL_m
\end{align*}
at $ t=t_0+1 $.
Recalling $ t_0\in[0,+\infty) $ was arbitrary. Thus we complete the proof.
\end{proof}
Because any space-time derivative $ \frac{\partial^j}{\partial t^j}\nabla^kC $ can express solely in terms of space derivatives of $C$, we can conclude that
\begin{cor}\label{space-time}
	There exist constants $ L_{j,k} $ such that if $|C|$ is bounded then the space-time derivatives are bounded, that means
	\begin{align*}
		 \left|\frac{\partial^j}{\partial t^j}\nabla^kC\right|^2\le L_{j,k}.
	\end{align*}
\end{cor}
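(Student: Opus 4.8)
The plan is to prove Corollary \ref{space-time} by induction on the number $j$ of time derivatives, at each stage trading a time derivative for spatial covariant derivatives of $C$ by means of the evolution equations derived above, and then appealing to Proposition \ref{Cmbound}. Granting that $|C|$ is bounded, Proposition \ref{Cmbound} already supplies uniform bounds $|\nabla^{m}C|^{2}\le L_{m}$ on $[0,+\infty)$ for every $m\ge 0$, which serves as the base case $j=0$ (with $L_{0,k}=L_{k}$).

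For the inductive step I would first establish that $\tfrac{\partial^{j}}{\partial t^{j}}\nabla^{k}C$ admits a representation as a universal linear combination of $*$-products of spatial derivatives $\nabla^{0}C,\dots,\nabla^{N}C$ with $N=N(j,k)$ finite. To produce it, I would apply $\tfrac{\partial^{j}}{\partial t^{j}}$ to the evolution equation
\[
\frac{\partial}{\partial t}\nabla^{k}C=\frac{1}{2n}\Delta\nabla^{k}C+\nabla^{k}C+\sum_{a+b+c=k}\nabla^{a}C*\nabla^{b}C*\nabla^{c}C,
\]
established above, whose right-hand side is purely spatial of order $\le k+2$. Commuting the remaining $j$ time derivatives past $\nabla$ and $\Delta$ introduces, by the commutation lemma above, only extra factors $\tfrac{\partial}{\partial t}\Gamma_{ij}^{k}=C*\nabla C$ together with the metric variations $\tfrac{\partial}{\partial t}g_{ij}=T_{p}C^{p}_{ij}$ and $\tfrac{\partial}{\partial t}g^{ij}=-T_{p}C^{piq}$, which are themselves $*$-products of $C$ since $T=\tfrac1n\operatorname{trace}_{g}C$. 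Thus $\tfrac{\partial^{j+1}}{\partial t^{j+1}}\nabla^{k}C$ becomes a sum of terms $\tfrac{\partial^{j}}{\partial t^{j}}\nabla^{k'}C$ with $k'\le k+2$, each $*$-contracted with spatial derivatives of $C$; feeding the inductive hypothesis into every term exhibits $\tfrac{\partial^{j+1}}{\partial t^{j+1}}\nabla^{k}C$ as a universal $*$-polynomial in $\nabla^{0}C,\dots,\nabla^{N(j+1,k)}C$.

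With the representation in hand I would conclude by taking $g$-norms: the $g$-norm of any $*$-contraction of tensors is bounded by a combinatorial constant times the product of the $g$-norms of the factors, so Proposition \ref{Cmbound} yields
\[
\left|\frac{\partial^{j}}{\partial t^{j}}\nabla^{k}C\right|^{2}\le L_{j,k}
\]
uniformly on $[0,+\infty)$, with $L_{j,k}$ an explicit polynomial in $L_{0},\dots,L_{N(j,k)}$.

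I expect the main obstacle to be purely organisational: carefully verifying that every commutator $[\tfrac{\partial}{\partial t},\nabla]$ and $[\tfrac{\partial}{\partial t},\Delta]$, as well as each metric variation used to raise or lower indices inside the $*$-products, contributes only $*$-products of $C$ and never an uncontrolled inverse power of $g$. This is exactly what makes the reduction to spatial derivatives legitimate, and it holds here because all norms and contractions are taken with respect to $g$ itself, so nothing beyond Proposition \ref{Cmbound} is needed; the remainder is a routine, if notation-heavy, induction.
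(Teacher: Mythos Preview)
Your proposal is correct and follows exactly the approach the paper has in mind: the paper's own justification is a single sentence---``any space-time derivative $\tfrac{\partial^{j}}{\partial t^{j}}\nabla^{k}C$ can [be] express[ed] solely in terms of space derivatives of $C$''---and your induction on $j$, using the evolution equation for $\nabla^{k}C$ together with $\tfrac{\partial}{\partial t}g=C*C$ and $\tfrac{\partial}{\partial t}\Gamma=C*\nabla C$, is precisely the routine computation that underlies that sentence. One small clarification: when you apply $\tfrac{\partial^{j}}{\partial t^{j}}$ to the right-hand side (which is a $*$-polynomial in $\nabla^{0}C,\dots,\nabla^{k+2}C$ with metric contractions), the Leibniz rule distributes the $j$ time derivatives among the factors and the metric, so you obtain products $\tfrac{\partial^{j_{1}}}{\partial t^{j_{1}}}\nabla^{a_{1}}C*\cdots$ with $\sum j_{i}\le j$, not only terms of the form $\tfrac{\partial^{j}}{\partial t^{j}}\nabla^{k'}C$; but since each $j_{i}\le j$ the inductive hypothesis still applies to every factor, and the conclusion stands.
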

\section{Proof of Theorem \ref{mainthmA}}\label{sec-ProofA}
In this section, our goal is to prove the convergence result of the flow \eqref{flow}. To do so, we begin by introducing the notion of the centro-affine area, which plays a crucial role in the proof. For  a closed uniformly convex hypersurface $ \mathcal{M} $, the centro-affine area is defined as follows
\begin{eqnarray*}
	\text{Area}(\mathcal{M}):=\int_{\mathcal{M}}\sqrt{\det(g_{ij})}d\mu_{\mathcal{M}},
\end{eqnarray*}
where $ \sqrt{\det(g_{ij})}d\mu_M $ denotes the area form with respect to the cntro-affine metric $ g_{ij} $ on $ \mathcal{M} $.
Similar to the isoperimetric inequality in the Euclidean geometry, there exists the following well-known inequality concerning the centro-affine area, which plays a key role in our subsequent arguments.
\begin{prop}[The centro-affine isoperimetric inequality \cite{LiAM}]
	In centro-affine geometry, for any closed smooth strictly convex hypersurface $ \mathcal{M} $ with the origin in its interior, the centro-affine area of the hypersurface satisfies
	\begin{eqnarray}\label{caiso}
		\text{Area}(\mathcal{M})\le O_n,
	\end{eqnarray}
		where $ O_n $ is the n-dimensional volume of the Euclidean unit ball in $ \R^{n+1} $,	and the equality holds if and only if $ \mathcal{M} $ is an ellipsoid centered at the origin.
\end{prop}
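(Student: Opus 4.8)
The plan is to pass to the Euclidean support function $s$ of $\mathcal{M}$, identify $\mathrm{Area}(\mathcal{M})$ with the centro-affine surface area of the convex body it bounds, and then prove the inequality by combining the Cauchy--Schwarz inequality with the Blaschke--Santaló inequality. First, using the Gauss-map parametrization $X(x)=s(x)x+\bar{\nabla}s(x)$, $x\in\mathbb{S}^n$, together with the Gauss equation and the defining formula for $g$, one computes in an orthonormal frame on $\mathbb{S}^n$ that $g_{ij}=s^{-1}\bigl(\bar{\nabla}^2_{ij}s+s\delta_{ij}\bigr)$, so that $\det(g_{ij})=s^{-n}\det(\bar{\nabla}^2 s+sI)$; equivalently this follows from \eqref{Tfun} since $[X_*(E_1),\dots,X_*(E_n),X]=s/K$ and, by \eqref{euclidean}, $\psi=K/s^{n+2}$ with $K=1/\det(\bar{\nabla}^2 s+sI)$. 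Hence
\begin{equation*}
	\mathrm{Area}(\mathcal{M})=\int_{\mathbb{S}^n}\Bigl(\tfrac{\det(\bar{\nabla}^2 s+sI)}{s^{n}}\Bigr)^{1/2}d\sigma ,
\end{equation*}
with $d\sigma$ the spherical volume element, which is exactly the centro-affine ($L_{n+1}$) surface area of $K$, where $\partial K=\mathcal{M}$.

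Next I would factor the integrand as $\bigl(s\det(\bar{\nabla}^2 s+sI)\bigr)^{1/2}\cdot s^{-(n+1)/2}$ and apply the Cauchy--Schwarz inequality, obtaining
\begin{equation*}
	\mathrm{Area}(\mathcal{M})\le\Bigl(\int_{\mathbb{S}^n}s\det(\bar{\nabla}^2 s+sI)\,d\sigma\Bigr)^{1/2}\Bigl(\int_{\mathbb{S}^n}s^{-(n+1)}\,d\sigma\Bigr)^{1/2}=(n+1)\sqrt{\,|K|\,|K^{\circ}|\,},
\end{equation*}
since $(n+1)|K|=\int_{\mathbb{S}^n}s\det(\bar{\nabla}^2 s+sI)\,d\sigma$ and the polar body $K^{\circ}=\{y:\langle x,y\rangle\le1 \text{ for all } x\in K\}$ satisfies $(n+1)|K^{\circ}|=\int_{\mathbb{S}^n}s^{-(n+1)}\,d\sigma$. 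The Blaschke--Santaló inequality $|K|\,|K^{\circ}|\le\bigl(O_n/(n+1)\bigr)^{2}$ then gives $\mathrm{Area}(\mathcal{M})\le O_n$.

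It remains to treat equality. If $\mathrm{Area}(\mathcal{M})=O_n$ then, in particular, equality holds in Cauchy--Schwarz, forcing $\det(\bar{\nabla}^2 s+sI)=c\,s^{-(n+2)}$ for a constant $c$, i.e.\ $\psi\equiv\mathrm{const}$, hence $T\equiv0$ by \eqref{rel-support}; thus $\mathcal{M}$ is a proper affine hypersphere and, being closed and uniformly convex, an ellipsoid, which---$\psi$ being constant relative to the origin---is centered at the origin (this is also the equality case of the Blaschke--Santaló inequality). Conversely, by the centro-affine invariance of $\mathrm{Area}$ every origin-centered ellipsoid has the same centro-affine area as the unit sphere, namely $O_n$. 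I expect the bulk of the work to lie in the first step---the explicit support-function computation of $g_{ij}$, including the matching of the ambient area element in the definition of $\mathrm{Area}(\mathcal{M})$ with $d\sigma$---and in quoting the Blaschke--Santaló inequality in the form appropriate to the prescribed origin; granting these, the inequality and its rigidity follow as above.
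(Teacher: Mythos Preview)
The paper does not give its own proof of this proposition; it is simply quoted from \cite{LiAM}. Your approach---writing the centro-affine area as $\int_{\mathbb{S}^n}\bigl(\det(\bar\nabla^2 s+sI)\bigr)^{1/2}s^{-n/2}\,d\sigma$, applying Cauchy--Schwarz to reach $(n+1)\sqrt{|K|\,|K^\circ|}$, and then invoking Blaschke--Santal\'o---is the standard route to this inequality, and everything through the Cauchy--Schwarz step (including the identification of the two integrals with $(n+1)|K|$ and $(n+1)|K^\circ|$) is correct.

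The genuine gap is precisely where you hedge. The Blaschke--Santal\'o inequality $|K|\,|K^\circ|\le\omega_{n+1}^2$ holds only when the polar is taken about the Santal\'o point of $K$, not about an arbitrary interior origin; for a generic origin the volume product is unbounded (let the origin approach $\partial K$), so there is no ``form appropriate to the prescribed origin'' to quote. Worse, the inequality \eqref{caiso} itself fails without a centering hypothesis: take $\mathcal{M}$ to be the unit sphere translated so that the origin sits at $(r,0,\dots,0)$ with $0<r<1$. Then $s(x)=1+rx_1$ and $\det(\bar\nabla^2 s+sI)\equiv 1$, and by strict convexity of $u\mapsto u^{-n/2}$ together with Jensen's inequality,
\[
\mathrm{Area}(\mathcal{M})=\int_{\mathbb{S}^n}(1+rx_1)^{-n/2}\,d\sigma \;>\; |\mathbb{S}^n|\cdot\Bigl(\tfrac{1}{|\mathbb{S}^n|}\int_{\mathbb{S}^n}(1+rx_1)\,d\sigma\Bigr)^{-n/2}=|\mathbb{S}^n|=O_n.
\]
Thus the proposition, as stated for an arbitrary interior origin, cannot be proved; it needs an additional hypothesis such as ``the origin is the Santal\'o point of $K$'' (or ``$\mathcal{M}$ is symmetric about the origin''). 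Under that hypothesis your argument is complete, and your equality analysis---equality in Cauchy--Schwarz forces $\psi=K/s^{n+2}$ constant, hence $T\equiv 0$ by \eqref{rel-support}, hence an origin-centered ellipsoid---is correct.
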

Let $d\mu_t:=\sqrt{\det(g_{ij})}d\mu_{\mathcal{M}_t}$. From Proposition \ref{evo}, we know that the evolution equation of the centro-affine area under the flow \eqref{flow} is
\begin{eqnarray}\label{Aevo}
	\frac{d}{dt}\text{Area}(\mathcal{M}_t)=\int_{\mathcal{M}_t}\frac{\partial}{\partial t}\sqrt{\det(g_{ij})}d\mu_{\mathcal{M}_t}=\frac{n}{2}\int_{\mathcal{M}_t}|T|^2d\mu_t,
\end{eqnarray}
which implies that $ \text{Area}(\mathcal{M}_t) $ is non-decreasing along this flow.
Furthermore, referring to \eqref{rel-support}, we can deduce the following conclusion.
\begin{prop}\label{monton}
	Let $ \mathcal{M}_t $ be a closed smooth strictly convex solution of the flow \eqref{flow}. Then the centro-affine area $ Area(\mathcal{M}_t) $ is non-decreasing along the flow. The monotonicity is strict unless $ \mathcal{M}_t $ is an ellipsoid centered at the origin.
\end{prop}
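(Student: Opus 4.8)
The plan is to read off the monotonicity directly from the area evolution formula \eqref{Aevo}, and then to pin down the equality case by combining the affine-sphere characterization \eqref{rel-support} with a classification of compact affine spheres.

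First I would note that \eqref{Aevo} gives
\begin{align*}
	\frac{d}{dt}\text{Area}(\mathcal{M}_t)=\frac{n}{2}\int_{\mathcal{M}_t}|T|^2\,d\mu_t ,
\end{align*}
and since $|T|^2\ge 0$ pointwise while $d\mu_t$ is a positive volume element, the right-hand side is non-negative; hence $\text{Area}(\mathcal{M}_t)$ is non-decreasing along the flow. For the rigidity part, suppose $\frac{d}{dt}\text{Area}(\mathcal{M}_t)=0$ at some time $t$. Then the integral of the continuous, non-negative function $|T|^2$ over $\mathcal{M}_t$ vanishes, which forces $T\equiv 0$ on $\mathcal{M}_t$. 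By \eqref{rel-support}, $T=0$ is equivalent to $D(\log\rho)\equiv 0$, i.e. to the equi-affine support function $\rho$ being constant, which is precisely the condition that $\mathcal{M}_t$ be a proper affine hypersphere with center at the origin (cf. the remark after \eqref{rel-support} and \cite{lls}).

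It then remains to invoke the classical fact that a closed, locally strongly convex proper affine hypersphere in $\R^{n+1}$ is an ellipsoid centered at its affine center, here the origin (see \cite{Calabi,ns}). Conversely, when $\mathcal{M}_t$ is an origin-centered ellipsoid one has $T\equiv 0$, hence $\psi$ constant, so by the stationarity remark following \eqref{flow} the flow merely rescales it, it remains an origin-centered ellipsoid, and $\text{Area}(\mathcal{M}_t)$ stays constant. Putting these together shows that the monotonicity of $\text{Area}(\mathcal{M}_t)$ is strict at every time at which $\mathcal{M}_t$ is not an ellipsoid centered at the origin. The only step that is not completely routine is the classification of closed affine spheres as ellipsoids; one could alternatively try to extract the equality case from the centro-affine isoperimetric inequality \eqref{caiso}, since $\text{Area}(\mathcal{M}_t)\le O_n$ with equality only for origin-centered ellipsoids, but identifying the equality case of the \emph{monotonicity} still routes through $T\equiv 0$, so I expect to use the affine-sphere rigidity regardless.
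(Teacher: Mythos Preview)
Your proposal is correct and follows essentially the same route as the paper: the paper derives the monotonicity from \eqref{Aevo} and then, with a one-line reference to \eqref{rel-support}, reduces the equality case to $T\equiv 0$ and hence to the proper-affine-sphere characterization. Your write-up is simply a more explicit version of this, including the classification of closed proper affine hyperspheres as origin-centered ellipsoids, which the paper leaves implicit.
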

By the centro-affine isoperimetric inequality \eqref{caiso} and the evolution equation \eqref{Aevo}, it is evident that
\begin{lem}\label{intc}
	Under the flow \eqref{flow}, $ \int_{0}^{+\infty}(\int_{\mathcal{M}_t}|T|^2d\mu_t)dt $ converges.
\end{lem}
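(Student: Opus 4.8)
The plan is to combine the monotonicity of the centro-affine area (Proposition \ref{monton}) with its uniform upper bound coming from the centro-affine isoperimetric inequality \eqref{caiso}. By \eqref{Aevo}, the derivative of $\mathrm{Area}(\mathcal{M}_t)$ is exactly $\tfrac{n}{2}\int_{\mathcal{M}_t}|T|^2\,d\mu_t$, which is nonnegative, so $\mathrm{Area}(\mathcal{M}_t)$ is nondecreasing. On the other hand, \eqref{caiso} gives $\mathrm{Area}(\mathcal{M}_t)\le O_n$ for every $t\in[0,+\infty)$, since each $\mathcal{M}_t$ is a closed smooth strictly convex hypersurface enclosing the origin (this convexity and the fact that the origin stays inside were established in Section \ref{sec-ER}). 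Hence $\mathrm{Area}(\mathcal{M}_t)$ is a bounded monotone function of $t$.

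The main step is then just the fundamental theorem of calculus: integrating \eqref{Aevo} over $[0,S]$ for any $S>0$ yields
\begin{align*}
	\frac{n}{2}\int_{0}^{S}\left(\int_{\mathcal{M}_t}|T|^2\,d\mu_t\right)dt=\mathrm{Area}(\mathcal{M}_S)-\mathrm{Area}(\mathcal{M}_0)\le O_n-\mathrm{Area}(\mathcal{M}_0).
\end{align*}
Since the right-hand side is a finite constant independent of $S$, and the integrand $\int_{\mathcal{M}_t}|T|^2\,d\mu_t$ is nonnegative, letting $S\to+\infty$ shows that $\int_{0}^{+\infty}\bigl(\int_{\mathcal{M}_t}|T|^2\,d\mu_t\bigr)dt$ converges, with value at most $\tfrac{2}{n}\bigl(O_n-\mathrm{Area}(\mathcal{M}_0)\bigr)$.

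There is essentially no obstacle here: the statement is an immediate corollary of the two cited facts. The only points worth a sentence of care are that the evolution formula \eqref{Aevo} is valid for all $t$ (which follows from Proposition \ref{evo} together with the long-time existence from Theorem \ref{Uin-Exi}), and that each $\mathcal{M}_t$ satisfies the hypotheses of the isoperimetric inequality so that the uniform bound $O_n$ genuinely applies along the whole flow. Given these, the convergence of the improper integral follows from the monotone convergence theorem applied to the partial integrals.
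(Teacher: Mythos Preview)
Your proof is correct and follows exactly the approach the paper indicates: the paper simply states that the lemma is ``evident'' from the centro-affine isoperimetric inequality \eqref{caiso} together with the area evolution equation \eqref{Aevo}, and your argument spells out precisely this combination by integrating \eqref{Aevo} and invoking the uniform upper bound $O_n$.
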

Based on the results from Section \ref{sec-UniformE}, we obtain
\begin{prop}\label{to0}
	Under the flow \eqref{flow}, $ \int_{\mathcal{M}_t}|T|^2d\mu_t $ converges to zero as time tends to infinity, that is,
	\begin{align*}
		\lim_{t\rightarrow+\infty}\int_{\mathcal{M}_t}|T|^2d\mu_t=0.
	\end{align*}
\end{prop}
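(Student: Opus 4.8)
The plan is to combine the two facts already in hand: Lemma~\ref{intc} gives that $\int_0^{+\infty}\bigl(\int_{\mathcal{M}_t}|T|^2\,d\mu_t\bigr)\,dt<+\infty$, and the uniform estimates of Section~\ref{sec-UniformE} (together with the $C^0$, gradient and curvature bounds of Section~\ref{sec-ER}) give enough regularity to prevent $\int_{\mathcal{M}_t}|T|^2\,d\mu_t$ from oscillating. The convergence of the time-integral alone only forces $\liminf_{t\to\infty}\int_{\mathcal{M}_t}|T|^2\,d\mu_t=0$; to upgrade this to a genuine limit I would show that $\phi(t):=\int_{\mathcal{M}_t}|T|^2\,d\mu_t$ has a derivative that is bounded above (or at least bounded) uniformly in $t$, so that $\phi$ is uniformly continuous on $[0,+\infty)$; a nonnegative, uniformly continuous function with convergent improper integral must tend to zero.

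First I would differentiate $\phi(t)=\int_{\mathcal{M}_t}|T|^2\,d\mu_t$ in time. Using Proposition~\ref{evo} for $\partial_t\sqrt{\det(g_{ij})}=\tfrac{n}{2}|T|^2\sqrt{\det(g_{ij})}$ and the parabolic evolution~\eqref{T2pf} for $|T|^2$, integrating by parts the Laplacian term over the closed manifold $\mathcal{M}_t$, one gets
\begin{align*}
	\frac{d}{dt}\phi(t)=\int_{\mathcal{M}_t}\Bigl(-\tfrac{1}{n}|\nabla T|^2+\tfrac{n+3}{n}|T|^2-\tfrac{1}{n}T_iT_\alpha C^{ipl}C^{\alpha}_{pl}+\tfrac{n}{2}|T|^4\Bigr)\,d\mu_t.
\end{align*}
Every term on the right is controlled: $|T|^2$ is uniformly bounded by Lemma~\ref{Tbound}, $|C|^2$ is uniformly bounded by Lemma~\ref{Cbound}, $|\nabla T|^2$ is controlled because $\nabla T$ is expressible through $\nabla C$ and the bounds of Proposition~\ref{Cmbound}, and the centro-affine area $\text{Area}(\mathcal{M}_t)\le O_n$ is bounded by the centro-affine isoperimetric inequality~\eqref{caiso}, so the total measure of $\mathcal{M}_t$ is uniformly bounded above (its area form is $\sqrt{\det(g_{ij})}\,d\mu_{\mathcal{M}_t}$, which is exactly what appears in $d\mu_t$). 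Hence $\bigl|\tfrac{d}{dt}\phi(t)\bigr|\le L$ for a constant $L$ independent of $t$, and $\phi$ is Lipschitz, in particular uniformly continuous, on $[0,+\infty)$.

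Finally I would invoke the elementary real-analysis fact: if $\phi\ge0$ is uniformly continuous on $[0,+\infty)$ and $\int_0^{+\infty}\phi(t)\,dt<+\infty$, then $\lim_{t\to\infty}\phi(t)=0$. (Indeed, if not, there are $\varepsilon>0$ and $t_k\to\infty$ with $\phi(t_k)\ge\varepsilon$; by uniform continuity $\phi\ge\varepsilon/2$ on intervals of a fixed length $\delta$ around each $t_k$, which can be chosen disjoint, contradicting the finiteness of the integral.) Applying this with the $\phi$ above and Lemma~\ref{intc} yields $\lim_{t\to+\infty}\int_{\mathcal{M}_t}|T|^2\,d\mu_t=0$, which is the claim. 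The main obstacle I anticipate is bookkeeping rather than conceptual: carefully justifying that the various cubic and quartic contractions ($T_iT_\alpha C^{ipl}C^{\alpha}_{pl}$, $|T|^4$) and the gradient term $|\nabla T|^2$ are all dominated using only the already-established uniform bounds, and confirming that $\int_{\mathcal{M}_t}d\mu_t$ is uniformly bounded—this last point rests squarely on $\text{Area}(\mathcal{M}_t)\le O_n$, so one must be sure the measure $d\mu_t$ used in defining $\phi$ is the centro-affine area form and not some other normalization.
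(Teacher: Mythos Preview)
Your proposal is correct and follows essentially the same approach as the paper: differentiate $\phi(t)=\int_{\mathcal{M}_t}|T|^2\,d\mu_t$, bound $\frac{d}{dt}\phi$ uniformly using the estimates of Section~\ref{sec-UniformE} together with the area bound~\eqref{caiso}, and conclude from Lemma~\ref{intc} via the standard ``integrable plus Lipschitz implies limit zero'' argument. The paper is terser---it cites the non-parabolic form~\eqref{Tevo} rather than~\eqref{T2pf} and leaves the final real-analysis step implicit---but the substance is identical.
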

\begin{proof}
	Since
	\begin{align*}
		\frac{d}{dt}\int_{\mathcal{M}_t}|T|^2d\mu_t=\int_{\mathcal{M}_t}\left(\frac{\partial}{\partial t}|T|^2+\frac{n}{2}|T|^4\right)d\mu_t.
	\end{align*}
In view of \eqref{Tevo}, \eqref{caiso} and Proposition \ref{Cmbound}, $ \displaystyle\frac{d}{dt}\int_{\mathcal{M}_t}|T|^2d\mu_t $ is bounded on $ [0,+\infty) $.
Thus the result immediately follows from Lemma \ref{intc}.
\end{proof}
Now let us proceed to complete the proof of Theorem \ref{mainthmA}.
\begin{proof}[Proof of Theorem \ref{mainthmA}]
	In view of Corollary \ref{space-time}, all space-time derivative of $ |T|^2 $ are uniformly bounded. Thus, according to the Arzel\`{a}-Ascoli theorem, for any sequence of times $ \{t_k\} $ there exists a subsequence $ \{t_{k_i}\} $ such that $ |T(x,t_{k_i})|^2 $ converges smoothly to a function $ h(x) $ as $ t_{k_i} \rightarrow +\infty $. In view of Proposition \ref{to0}, $ h(x) $ must be zero, i.e., $ h(x)\equiv0 $. Since every subsequence converges to zero, $ |T|^2 $ converges smoothly to zero as $ t\rightarrow +\infty $. Hence we complete the proof.
\end{proof}
\section{The eternal solutions of the flow}\label{sec-EternalSolutions}
In this section, we discuss the eternal solutions of the flow \eqref{flow}. Assume from now on that $ X(\cdot,t) $ is a solution to the flow \eqref{flow} defined on $ (-\infty,+\infty) $ with $ |T|^2 $ is uniformly bounded on $ (-\infty,+\infty) $. Under this assumption, we  are able to obtain a conclusion similar to that of Lemma \ref{Cbound}.
\begin{lem}\label{esCbound}
	Under the flow \eqref{flow}, the quantity  $ |C(\cdot,t)|^2 $ of the hypersurface $ \mathcal{M}_t $ is uniformly bounded on $ (-\infty,+\infty) $.
\end{lem}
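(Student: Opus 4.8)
The plan is to mimic the argument used in Lemma \ref{Cbound}, but now exploit the two-sided time interval to upgrade the decay estimate into a genuine uniform bound on $(-\infty,+\infty)$. Recall from Section \ref{sec-UniformE} that under the flow \eqref{flow} the quantity $\rho(t):=\max_{\mathcal{M}_t}|C|^2$ satisfies the differential inequality
\begin{align*}
	\frac{d\rho}{dt}\le-\frac{1}{n^2}\rho^2+6n|T|^2,
\end{align*}
where this inequality came from the parabolic evolution of $|C|^2$ together with the Cauchy--Schwarz bound $|P|^2\ge\frac1n|C|^4$ and the maximum principle; none of that derivation used the initial time $t=0$, so it remains valid on all of $(-\infty,+\infty)$. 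By hypothesis there is a constant $\Lambda$ with $|T(\cdot,t)|^2\le\Lambda$ for all $t\in(-\infty,+\infty)$, so $\rho$ satisfies $\rho'\le-\frac{1}{n^2}\rho^2+6n\Lambda$ on the whole real line.

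First I would observe that this ODE inequality forces $\rho$ to be bounded above by $\sqrt{6n^3\Lambda}$ on all of $\R$. Indeed, if $\rho(t_1)>\sqrt{6n^3\Lambda}$ at some time $t_1$, then for any earlier time $t<t_1$ at which $\rho(t)\ge\sqrt{6n^3\Lambda}$ one has $\rho'<0$; running the comparison ODE $\sigma'=-\frac{1}{n^2}\sigma^2+6n\Lambda$ backward from $(t_1,\rho(t_1))$, the solution $\sigma$ stays above $\sqrt{6n^3\Lambda}$, is decreasing, and blows up to $+\infty$ in finite backward time (since $\sigma'\le-\frac{1}{2n^2}\sigma^2$ once $\sigma$ is large), contradicting the fact that $\rho$ is finite at all earlier times because $X(\cdot,t)$ is a smooth solution on $(-\infty,+\infty)$. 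Hence $\rho(t)\le\sqrt{6n^3\Lambda}$ for every $t$, which is precisely the claimed uniform bound on $|C(\cdot,t)|^2$.

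The key step — and the only place requiring care — is the backward-in-time comparison argument: one must rule out that $|C|^2$ was unbounded in the distant past. The point is that a super-quadratic decay rate, read backward, becomes a super-quadratic growth rate and produces finite-time blow-up; this is incompatible with smoothness on the full line, so the standard ODE comparison (applied to the barrier $\sigma'=-\frac{1}{2n^2}\sigma^2$ valid in the regime $\rho\ge\sqrt{12n^3\Lambda}$, say) closes the argument. The remaining details — verifying that $\rho(t)=\max_{\mathcal{M}_t}|C|^2$ is Lipschitz in $t$ so the differential inequality holds in the a.e. or viscosity sense, and that the maximum principle applies on the compact hypersurfaces $\mathcal{M}_t$ — are identical to those already used in Lemma \ref{Cbound} and need no modification.
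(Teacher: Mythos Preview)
Your proposal is correct and takes essentially the same approach as the paper: both exploit the differential inequality $\rho'\le-\tfrac{1}{n^2}\rho^2+6n\Lambda$ together with the eternal existence of the solution to force a uniform bound. The paper integrates forward from an arbitrary $t_0$ to obtain $\rho(t)\le L+\tfrac{n^2}{t-t_0}$ and then sends $t_0\to-\infty$, whereas you run the comparison ODE backward from a hypothetical large value and derive a finite-time blow-up contradiction; these are the same argument in contrapositive form, and your version even makes the constant $\sqrt{6n^3\Lambda}$ explicit.
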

\begin{proof}
	The proof is similar. For any $ t\in (t_0,+\infty)$, the differential inequality \eqref{din} gives
	\begin{align*}
		\rho(t)\le L+\frac{n^2}{t-t_0}
	\end{align*}
for some positive constants $ L $.
Since $ X(\cdot,t) $ is an eternal solution we may let $ t_0\rightarrow-\infty $, and the result follows.
\end{proof}
Then, the proof of the following proposition is a simple modification of that of Proposition \ref{Cmbound} and is omitted.
\begin{prop}\label{Cm-bound}
	For each $ m\ge 0 $ there is a positive constant $ L_m $ such that
	\begin{align*}
		|\nabla^mC|^2\le L_m
	\end{align*}
uniformly on $ (-\infty,+\infty) $.
\end{prop}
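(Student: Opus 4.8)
The plan is to run exactly the induction on $m$ used for Proposition~\ref{Cmbound}, the only substantive change being that the time base point is now allowed to be an arbitrary real number and the base case is supplied by Lemma~\ref{esCbound} instead of Lemma~\ref{Cbound}. The evolution equations behind Corollary~\ref{mevo} (and the preceding proposition) are local consequences of the flow \eqref{flow}, so they remain valid verbatim for the eternal solution; the real issue is to make sure every constant produced along the way depends only on $n$ and the previously obtained bounds, and \emph{not} on the base time. For $m=0$ the statement is precisely Lemma~\ref{esCbound}, giving $|\nabla^0 C|^2=|C|^2\le L_0$ on $(-\infty,+\infty)$. So assume $|\nabla^{\bar m}C|^2\le L_{\bar m}$ holds uniformly on $(-\infty,+\infty)$ for every $\bar m\le m$.

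For the inductive step I would fix an arbitrary $t_0\in\mathbb R$, set $\tilde t=t-t_0\in[0,1]$ for $t\in[t_0,t_0+1]$, and consider the auxiliary function
\[
f:=\tilde t\,|\nabla^{m+1}C|^2+N\,|\nabla^m C|^2,
\]
with $N$ a large constant chosen in a moment. Applying Corollary~\ref{mevo} to both $|\nabla^{m+1}C|^2$ and $|\nabla^m C|^2$, using $\tilde t\le 1$, and absorbing the cubic terms $\sum_{i+j+k=m+1}\nabla^iC*\nabla^jC*\nabla^kC*\nabla^{m+1}C$ and $\nabla^mC*\nabla^mC$ via the induction hypothesis (every factor of order $\le m$ is already uniformly bounded), one reaches an inequality of the form
\[
\frac{\partial}{\partial t}f\le\frac{1}{2n}\Delta f+L_{11}\bigl(1+|\nabla^{m+1}C|^2\bigr)-\frac{N}{n}|\nabla^{m+1}C|^2+NL_{22},
\]
where $L_{11},L_{22}$ depend only on $n$ and $L_0,\dots,L_m$ — and in particular not on $t_0$. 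Choosing $N$ with $L_{11}-N/n\le 0$ removes the offending $|\nabla^{m+1}C|^2$ term and leaves $\partial_t f\le\frac{1}{2n}\Delta f+L_{33}$ with $L_{33}$ again independent of $t_0$. Since $f=N|\nabla^m C|^2\le NL_m$ at $t=t_0$, the parabolic maximum principle on $\mathbb S^n\times[t_0,t_0+1]$ gives $f\le L_{33}+NL_m$ throughout, hence $|\nabla^{m+1}C|^2(\cdot,t_0+1)\le L_{33}+NL_m$.

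Finally, since $t_0\in\mathbb R$ is arbitrary, $t_0+1$ sweeps all of $\mathbb R$, so $|\nabla^{m+1}C|^2\le L_{33}+NL_m=:L_{m+1}$ on $(-\infty,+\infty)$, closing the induction. I expect the only delicate point to be the bookkeeping on uniformity: one must verify that $N$, $L_{11}$, $L_{22}$, $L_{33}$ are genuinely independent of $t_0$ so that the unit-length window can be slid along the whole time axis. This is where the eternal hypothesis really enters, but only once and only at the base case — the passage $t_0\to-\infty$ in Lemma~\ref{esCbound} (via the differential inequality \eqref{din}) is what makes $L_0$ uniform on all of $\mathbb R$; after that, the higher-order bounds follow purely by the maximum-principle iteration above, which never needs the sign of $t_0$. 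Hence the argument is indeed a routine adaptation of the proof of Proposition~\ref{Cmbound}.
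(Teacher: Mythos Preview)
Your proposal is correct and is exactly the adaptation the paper has in mind: the paper omits the proof, stating only that it ``is a simple modification of that of Proposition~\ref{Cmbound},'' and your argument---replacing the base case by Lemma~\ref{esCbound} and letting $t_0$ range over all of $\mathbb{R}$ so that $t_0+1$ sweeps the whole time axis---is precisely that modification. Your emphasis on the $t_0$-independence of the constants $N,L_{11},L_{22},L_{33}$ is the right point to flag.
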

\begin{cor}\label{eternalspace-time}
	There exist constants $ L_{j,k} $ such that
	\begin{align*}
		\left|\frac{\partial^j}{\partial t^j}\nabla^kC\right|^2\le L_{j,k}
	\end{align*}
uniformly on $ (-\infty,+\infty) $.
\end{cor}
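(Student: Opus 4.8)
The target statement is Corollary~\ref{eternalspace-time}: for an eternal solution with $|T|^2$ uniformly bounded on $(-\infty,+\infty)$, every space-time derivative $\tfrac{\partial^j}{\partial t^j}\nabla^k C$ is uniformly bounded on $(-\infty,+\infty)$. The plan is to deduce this purely formally from Proposition~\ref{Cm-bound}, exactly mirroring the argument behind Corollary~\ref{space-time} but now over the full time line. First I would recall the structural evolution equation established in the previous proposition, $\frac{\partial}{\partial t}\nabla^m C=\frac{1}{2n}\Delta\nabla^m C+\nabla^m C+\sum_{i+j+k=m}\nabla^i C*\nabla^j C*\nabla^k C$, which expresses one time derivative of any $\nabla^m C$ as a universal polynomial expression in spatial covariant derivatives of $C$ of order at most $m+2$ (the Laplacian term), contracted with the centro-affine metric. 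The key point is that this identity holds verbatim on $(-\infty,+\infty)$ for an eternal solution.

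Next I would argue by induction on $j$. For $j=0$ the statement is precisely Proposition~\ref{Cm-bound}. For the inductive step, I would differentiate the structural identity once more in $t$ and use the evolution equations from Proposition~\ref{evo} for $g_{ij}$, $g^{ij}$, and the Christoffel symbols (whose $t$-derivative is $C*\nabla C$, as shown in the lemma preceding Proposition~\ref{Cm-bound}) to commute $\partial_t$ past $\Delta$ and past the metric contractions. The outcome is that $\frac{\partial^{j+1}}{\partial t^{j+1}}\nabla^k C$ can be rewritten, after repeatedly substituting the structural identity to trade every surviving time derivative for spatial ones, as a universal finite sum of terms each of which is a product (contracted by $g$) of finitely many factors of the form $\nabla^{k'}C$ with $k'$ bounded by $k+2(j+1)$. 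Since Proposition~\ref{Cm-bound} bounds every such $|\nabla^{k'}C|$ uniformly on $(-\infty,+\infty)$, and the metric contractions are controlled because $g$ and $g^{-1}$ are comparable to fixed bounds (a consequence of the a priori estimates and the uniform bound on $|T|^2$, which controls $\partial_t g_{ij}=T_pC^p_{ij}$), each term is uniformly bounded, hence so is the sum. This yields the claimed constant $L_{j,k}$.

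The only genuine bookkeeping is verifying that the substitution process terminates with spatial derivatives of uniformly bounded order: each application of the structural identity to a factor $\partial_t\nabla^m C$ raises the spatial order by at most $2$ and eliminates one time derivative, so starting from $\partial_t^{\,j+1}\nabla^k C$ one reaches a purely spatial expression of order at most $k+2(j+1)$ after $j+1$ rounds, and the $*$-products only multiply already-bounded quantities. I expect the main — though still routine — obstacle to be making precise that the commutator terms generated by moving $\partial_t$ through $\Delta$ and through the metric (i.e.\ the $C*\nabla C*(\cdot)$ and $C*C*(\cdot)$ corrections coming from $\partial_t\Gamma$ and $\partial_t g^{ij}$) do not increase the spatial order beyond this bound; this is exactly the content of the lemma preceding Proposition~\ref{Cm-bound}, applied inductively. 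Consequently the proof is a one-paragraph induction citing Proposition~\ref{Cm-bound} and that lemma, entirely parallel to Corollary~\ref{space-time}, and I would present it as such rather than writing out the combinatorics in full.
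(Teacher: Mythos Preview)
Your proposal is correct and matches the paper's approach: the paper gives no separate proof for Corollary~\ref{eternalspace-time}, relying instead on the one-line justification preceding Corollary~\ref{space-time} (``any space-time derivative $\tfrac{\partial^j}{\partial t^j}\nabla^kC$ can be expressed solely in terms of space derivatives of $C$''), now combined with Proposition~\ref{Cm-bound} in place of Proposition~\ref{Cmbound}. One small remark: your aside that ``$g$ and $g^{-1}$ are comparable to fixed bounds'' is unnecessary and slightly misleading---the norms $|\cdot|$ and the $*$-contractions are all taken with respect to $g$ itself, so the inequality $|A*B|\le c\,|A|\,|B|$ holds with a constant depending only on the tensor types, and no absolute control of the metric is required.
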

Furthermore, we have the following results. The proof follows a similar approach as in the previous section, and therefore, we omit the proof here.
\begin{lem}
	Under the flow \eqref{flow}, $ \int_{-\infty}^{+\infty}(\int_{\mathcal{M}_t}|T|^2d\mu_t)dt $ converges.
\end{lem}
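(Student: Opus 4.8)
The plan is to integrate the evolution equation \eqref{Aevo} for the centro-affine area over the whole real line, exactly as in the proof of Lemma \ref{intc}, but now exploiting the fact that the monotonicity of $\mathrm{Area}(\mathcal{M}_t)$ holds on $(-\infty,+\infty)$ while the centro-affine isoperimetric inequality \eqref{caiso} provides a single upper bound valid for all $t$.

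First I would recall that along the flow \eqref{flow} one has, by \eqref{Aevo},
\[
\frac{d}{dt}\mathrm{Area}(\mathcal{M}_t)=\frac{n}{2}\int_{\mathcal{M}_t}|T|^2\,d\mu_t\ge 0,
\]
so $t\mapsto\mathrm{Area}(\mathcal{M}_t)$ is non-decreasing on all of $(-\infty,+\infty)$. Since $\mathrm{Area}(\mathcal{M}_t)$ is the integral of a positive density over the compact hypersurface $\mathcal{M}_t$, it is strictly positive and finite for every $t$; hence the limit $A_{-\infty}:=\lim_{t\to-\infty}\mathrm{Area}(\mathcal{M}_t)$ exists with $A_{-\infty}\ge 0$. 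On the other hand, the centro-affine isoperimetric inequality \eqref{caiso} gives $\mathrm{Area}(\mathcal{M}_t)\le O_n$ for all $t$, so $A_{+\infty}:=\lim_{t\to+\infty}\mathrm{Area}(\mathcal{M}_t)$ exists with $A_{+\infty}\le O_n$. Then, integrating the identity above over any finite interval $[a,b]\subset(-\infty,+\infty)$,
\[
\frac{n}{2}\int_a^b\Big(\int_{\mathcal{M}_t}|T|^2\,d\mu_t\Big)dt=\mathrm{Area}(\mathcal{M}_b)-\mathrm{Area}(\mathcal{M}_a)\le A_{+\infty}-A_{-\infty}\le O_n .
\]
Because the integrand $\int_{\mathcal{M}_t}|T|^2\,d\mu_t$ is non-negative, letting $a\to-\infty$ and $b\to+\infty$ the left-hand side increases to $\int_{-\infty}^{+\infty}\big(\int_{\mathcal{M}_t}|T|^2\,d\mu_t\big)dt$, which is therefore finite, equal to $\tfrac{2}{n}(A_{+\infty}-A_{-\infty})\le\tfrac{2}{n}O_n$. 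This proves the claimed convergence.

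I do not expect any genuine obstacle: the argument is a verbatim transcription of the proof of Lemma \ref{intc} with $[0,+\infty)$ replaced by $(-\infty,+\infty)$. The only points requiring a word of justification are that $\mathrm{Area}(\mathcal{M}_t)$ is well defined and finite for every $t$ — which holds because $X(\cdot,t)$ is a smooth closed uniformly convex hypersurface for each $t\in(-\infty,+\infty)$ — and that the evolution formula \eqref{Aevo} remains valid on the whole line, which is guaranteed by the standing hypothesis that $|T|^2$ is uniformly bounded on $(-\infty,+\infty)$ together with Lemma \ref{esCbound} and Proposition \ref{Cm-bound}. Since all of these ingredients are already in place, the proof is routine.
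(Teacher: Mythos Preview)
Your argument is correct and is exactly the approach the paper has in mind: the paper omits the proof, stating that it ``follows a similar approach as in the previous section,'' i.e., it is a direct transcription of the proof of Lemma~\ref{intc} with $[0,+\infty)$ replaced by $(-\infty,+\infty)$, using \eqref{Aevo} and \eqref{caiso}. The only superfluous remark is your justification for the validity of \eqref{Aevo} on the whole line via the boundedness of $|T|^2$---the evolution formula is a pointwise identity that holds wherever the flow is smooth, independent of any uniform bounds.
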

\begin{prop}\label{Tto0-}
	Under the flow \eqref{flow}, $ \int_{\mathcal{M}_t}|T|^2d\mu_t $ converges to zero as time tends to $ -\infty $, that is,
	\begin{align*}
		\lim_{t\rightarrow-\infty}\int_{\mathcal{M}_t}|T|^2d\mu_t=0.
	\end{align*}
\end{prop}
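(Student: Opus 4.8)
The plan is to carry over, with time reversed, the argument used for Proposition \ref{to0}. Set $\phi(t):=\int_{\mathcal{M}_t}|T|^2\,d\mu_t\ge 0$. The lemma just stated gives $\int_{-\infty}^{+\infty}\phi(t)\,dt<\infty$, so the whole proposition reduces to the elementary real-analysis fact: \emph{if $\phi\ge 0$ is differentiable with $\int_{-\infty}^{+\infty}\phi<\infty$ and $|\phi'|\le M$ on $\R$, then $\phi(t)\to 0$ as $t\to-\infty$.} Thus the only substantive step is to establish a uniform bound $|\phi'(t)|\le M$ on the whole line $(-\infty,+\infty)$.

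For that bound I would differentiate under the integral sign using the variation of the centro-affine area form, $\frac{\partial}{\partial t}\sqrt{\det(g_{ij})}=\frac{n}{2}|T|^2\sqrt{\det(g_{ij})}$ (which follows from Proposition \ref{evo}(1) and already underlies \eqref{Aevo}), to obtain
\[
\phi'(t)=\int_{\mathcal{M}_t}\left(\frac{\partial}{\partial t}|T|^2+\frac{n}{2}|T|^4\right)d\mu_t .
\]
Substituting \eqref{Tevo}, the integrand becomes a universal contraction of $T$, $C$ and the first two covariant derivatives of $C$ (the term $T^iH_i$ contributes $\nabla H$, and $H$ is itself a contraction of $\nabla C$ since $T_i=\tfrac1n C^k_{ki}$; alternatively one may integrate $\int_{\mathcal{M}_t}T^iH_i\,d\mu_t=-n\int_{\mathcal{M}_t}H^2\,d\mu_t$ by parts). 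By Proposition \ref{Cm-bound} and Corollary \ref{eternalspace-time} this integrand is pointwise bounded by a constant uniform over all of $(-\infty,+\infty)$, and by the centro-affine isoperimetric inequality \eqref{caiso} the area form has total mass at most $O_n$; hence $|\phi'(t)|\le M$ with $M$ independent of $t$. This is exactly the place where one must use the estimates of Section \ref{sec-EternalSolutions}, which hold uniformly on $\R$ thanks to the $t_0\to-\infty$ trick of Lemma \ref{esCbound}, rather than the merely finite-time estimates of Section \ref{sec-UniformE}.

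Finally I would prove the real-analysis fact by contradiction: if $\phi(t_k)\ge\varepsilon>0$ along some $t_k\to-\infty$, then $|\phi'|\le M$ forces $\phi\ge\varepsilon/2$ on each interval $I_k=[t_k-\tfrac{\varepsilon}{2M},\,t_k+\tfrac{\varepsilon}{2M}]$; after passing to a subsequence making the $I_k$ pairwise disjoint, $\int_{-\infty}^{t_1+1}\phi\ge\sum_k\int_{I_k}\phi=+\infty$, contradicting $\int_{-\infty}^{+\infty}\phi<\infty$. I do not expect a genuine obstacle here: the computation mirrors the proof of Proposition \ref{to0} line by line, and the one thing needing care is checking that every bound entering $\phi'$ is genuinely uniform on the whole time line, which is guaranteed by Proposition \ref{Cm-bound} and Corollary \ref{eternalspace-time}. (This proposition, combined with an Arzel\`{a}-Ascoli argument as in the proof of Theorem \ref{mainthmA}, will then force the backward limit of an eternal solution to satisfy $T\equiv 0$, i.e. to be an ellipsoid, which is the route to Theorem \ref{mainthmB}.)
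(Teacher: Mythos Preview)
Your proposal is correct and follows essentially the same route the paper indicates (it omits the proof, pointing to the argument for Proposition~\ref{to0}): differentiate $\phi(t)=\int_{\mathcal{M}_t}|T|^2\,d\mu_t$, bound $\phi'$ uniformly via \eqref{Tevo}, the isoperimetric inequality \eqref{caiso}, and the eternal estimates of Proposition~\ref{Cm-bound}/Corollary~\ref{eternalspace-time}, then combine with $\int_{-\infty}^{+\infty}\phi<\infty$. Your only addition is spelling out the elementary real-analysis step, and you correctly flag that the whole-line estimates of Section~\ref{sec-EternalSolutions} must replace those of Section~\ref{sec-UniformE}.
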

As a consequence of Corollary \ref{eternalspace-time} and Proposition \ref{Tto0-}, the convergence of the Tchebychev vector field $ T $ as $ t\rightarrow -\infty $ can be obtained with a similar argument in the proof of Theorem \ref{mainthmA}.
\begin{prop}\label{thmes}
	Assume that $ X(\cdot,t) $ is a solution to the flow \eqref{flow} defined on $ (-\infty,+\infty) $ with $ |T|^2 $ is uniformly bounded on $ (-\infty,+\infty) $. Then the Tchebychev vector field  $ T $  converges smoothly to zero as $ t\rightarrow -\infty. $
\end{prop}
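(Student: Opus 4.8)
The plan is to run the proof of Theorem~\ref{mainthmA} backward in time; essentially every step is the verbatim time-reversed analogue, so I would organize it around the three inputs that are already available for eternal solutions under the standing hypothesis that $|T|^2$ is uniformly bounded on $(-\infty,+\infty)$. First, Lemma~\ref{esCbound} and Proposition~\ref{Cm-bound} give uniform bounds for $|C|^2$ and for every $|\nabla^m C|^2$ over all of $(-\infty,+\infty)$, and Corollary~\ref{eternalspace-time} upgrades these to uniform bounds for the full space--time jets $\frac{\partial^j}{\partial t^j}\nabla^k C$. Since $T_i=\tfrac1n C_{ki}^k$, i.e.\ $T$ is a fixed multiple of a metric trace of $C$, the same uniform bounds hold for $T$, and hence for the scalar invariant $|T(\cdot,t)|^2$ together with all of its space--time derivatives.

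Second, I would record the backward integral decay. The eternal analogue of Lemma~\ref{intc} (stated just above the proposition) says $\int_{-\infty}^{+\infty}\bigl(\int_{\mathcal{M}_t}|T|^2\,d\mu_t\bigr)\,dt$ converges, and Proposition~\ref{Tto0-} gives $\int_{\mathcal{M}_t}|T|^2\,d\mu_t\to 0$ as $t\to-\infty$. The only thing to verify here is that $\frac{d}{dt}\int_{\mathcal{M}_t}|T|^2\,d\mu_t$ stays bounded as $t\to-\infty$; this follows exactly as in Proposition~\ref{to0}, by writing out this derivative with Proposition~\ref{evo} and estimating by means of \eqref{Tevo}, the centro-affine isoperimetric inequality \eqref{caiso}, and the uniform bounds of Proposition~\ref{Cm-bound}.

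Third comes the compactness step. Fix an arbitrary sequence $t_k\to-\infty$. By the uniform space--time bounds of the first paragraph and the Arzel\`a--Ascoli theorem, a subsequence $t_{k_i}$ has the property that $|T(\cdot,t_{k_i})|^2$ converges in $C^\infty(\mathcal{M})$ to a nonnegative limit function $h$; passing to the limit in the integral decay from the second paragraph forces $h\equiv 0$. Since the initial sequence was arbitrary, every $C^\infty$ subsequential limit of $|T(\cdot,t)|^2$ as $t\to-\infty$ vanishes, whence $|T(\cdot,t)|^2\to 0$ in $C^\infty$ as $t\to-\infty$, i.e.\ $T$ converges smoothly to zero.

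I expect the only delicate point to be the one already implicit in the proof of Theorem~\ref{mainthmA}: making ``converges smoothly'' precise, i.e.\ ensuring that the uniform tensorial bounds for $C$ and its covariant derivatives (measured in the evolving centro-affine metric $g(t)$, with $\partial_t g_{ij}=T_pC^p_{ij}$) translate into uniform $C^\infty$-control of $|T(\cdot,t)|^2$ with respect to a fixed reference structure on $\mathcal{M}$, uniformly for $t\in(-\infty,+\infty)$, so that Arzel\`a--Ascoli genuinely applies on the infinite interval. This is handled in exactly the same way as in the forward-time case of Section~\ref{sec-ProofA} --- using that $|C|^2$ and all $|\nabla^m C|^2$, hence $\partial_t g_{ij}$ and the time derivatives of the Christoffel symbols, are controlled uniformly over all of $(-\infty,+\infty)$ --- so no new idea beyond Section~\ref{sec-UniformE} and Theorem~\ref{mainthmA} is required; the argument is otherwise a routine time reversal.
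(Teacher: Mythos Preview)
Your proposal is correct and follows essentially the same approach as the paper: the paper simply remarks that the convergence follows from Corollary~\ref{eternalspace-time} and Proposition~\ref{Tto0-} by the same Arzel\`a--Ascoli argument as in the proof of Theorem~\ref{mainthmA}, which is precisely the three-step outline (uniform space--time bounds, backward integral decay, compactness plus subsequence argument) you have written out in detail.
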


\begin{proof}[Proof of Theorem \ref{mainthmB}]
	By Theorem \ref{thmes}, $\mathcal{M}_{-\infty}$ is an origin-centered ellipsoids. This in turn implies that $ \lim_{t\rightarrow-\infty}\text{Area}(\mathcal{M}_t)=O_n $. On the other hand, by the centro-affine isoperimetric inequality \eqref{caiso} and Proposition \ref{monton}, for any $ t_1\in(-\infty,+\infty) $, we have
	\begin{align*}
		O_n\ge \text{Area}(\mathcal{M}_{t_1})\ge  \lim_{t\rightarrow-\infty}\text{Area}(\mathcal{M}_t)=O_n.
	\end{align*}
Thus $ \text{Area}(\mathcal{M}_t)\equiv O_n $ on $ (-\infty,+\infty) $. Hence, $ X(\cdot,t) $ is an origin-centered ellipsoid for every time $ t\in (-\infty,+\infty) $.
\end{proof}

\subsection*{Acknowledgements}
This work was supported by by National Natural Science Foundation of China (Grant Nos. 11631007 and 11971251).


\begin{thebibliography}{39}
\vglue10pt

\bibitem{Andrew1}{\small \textsc{B.~Andrews,}\ Contraction of convex hypersurfaces in Euclidean space. \emph{Calc. Var. Partial. Differential Equ.}, {\bf 2} (1994), 151-171.}
\bibitem{and0} {\small \textsc{B.~Andrews,}\ Contraction of convex hypersurfaces by their affine normal. {\it J. Differential Geom.}, {\bf 43} (1996), 207--230.}

\bibitem{and} {\small \textsc{B.~Andrews,}\ The affine curve-lengthening flow. {\it J. Reine Angew Math.}, {\bf 506} (1999), 43--83.}

\bibitem{AndrewGCF}{\small \textsc{B.~Andrews,}\ Gauss curvature flow: the fate of the rolling stone. \emph{Invent. Math.}, {\bf 138} (1999), 151-161.}

\bibitem{AndrewsGuanNi}{\small \textsc{B.~Andrews, P.~Guan, L.~Ni,}\ Flow by powers of the Gauss curvaturen. \emph{Adv. Math.}, {\bf 299} (2016), 174-201.}


\bibitem{Brendle5}{\small \textsc{S.~Angenent, S.~Brendle, P.~Daskalopoulos, N.~Sesum,}\ Unique asymptotics of compact ancient solutions to three-dimensional Ricci flow. \emph{Commun. Pure Appl. Math.}, {\bf 75} (2022), 1032-1073.}


\bibitem{asdpsn}{\small \textsc{S.~Angenent, P.~Daskalopoulos, N.~Sesum,}\ Unique asymptotics of ancient convex mean curvature flow solutions. \emph{J.  Differential Geom.}, {\bf 111} (2019), 381--455.}

\bibitem{asdpsn1}{\small \textsc{S.~Angenent, P.~Daskalopoulos, N.~Sesum,}\ Uniqueness of two-convex closed ancient solutions to the mean curvature flow. \emph{Ann. Math.}, {\bf 192} (2020), 353--436.}


\bibitem{ast} {\small \textsc{S.~Angenent, G.~Sapiro, A.~Tannenbaum,}\ On the affine heat equation for nonconvex curves. {\it J. Amer. Math. Soc.}, {\bf 11} (1998), 601--634.}




\bibitem{Brakke}{\small \textsc{K. A.~Brakke,}\ The motion of a surface by its mean curvature. \emph{Math. Notes}, {\bf 20} (1978), Princeton Univ. Press, Princeton, NJ.}


\bibitem{Brendle2}{\small \textsc{S.~Brendle,}\ Ancient solutions to the Ricci flow in dimension $3$. \emph{Acta. Math.}, {\bf 225} (2020), 1-102.}


\bibitem{BrendleGaCSF}{\small \textsc{S.~Brendle, K.~Choi, P.~Daskalopoulos,}\ Asymptotic behavior of flows by powers of the Gaussian curvature. \emph{Acta. Math.}, {\bf 219} (2018), 1-16.}
\bibitem{BrendleCK1}{\small \textsc{S.~Brendle, K.~Choi,}\ Uniqueness of convex ancient solutions to mean curvature flow in $\mathbb{R}^3$. \emph{Invent. Math.}, {\bf 217} (2019), 35-76.}



\bibitem{Brendle3}{\small \textsc{S.~Brendle, P.~Daskalopoulos, K.~Naff, N.~Sesum,}\ Uniqueness of compact ancient solutions to the higher-dimensional Ricci flow. \emph{J.  Reine  Angew. Math.}, {\bf 795} (2023), 85-138.}

\bibitem{Brendle4}{\small \textsc{S.~Brendle, P.~Daskalopoulos, N.~Sesum,}\ Uniqueness of compact ancient solutions to three-dimensional Ricci flow. \emph{Invent. Math.}, {\bf 226} (2021), 579-651.}


\bibitem{Brendle1}{\small \textsc{S.~Brendle, G.~Huisken, C.~Sinestrari,}\ Ancient solutions to the Ricci flow with pinched curvature. \emph{Duke. Math. J.}, {\bf 158} (2011), 537-551.}


\bibitem{Calabi}{\small \textsc{E.~Calabi,}\ Complete affine hyperspheres I. {\it Ist. NAZ Alta Mat. Sym. Mat.}, {\bf 10} (1972), 19-38.}

\bibitem{chen} {\small \textsc{S. Chen},\  Classifying convex compact ancient solutions to the affine curve shortening flow. {\it J. Geom. Anal.}, {\bf 25} (2015), 1075--1079.}

\bibitem{Chou} {\small \textsc{K. S.~Chou, X. J.~Wang},\  A logarithmic Gauss curvature flow and the Minkowski problem. {\it Ann. I. H. Poincare-An.}, {\bf 17} (2000), 733--751.}


\bibitem{Chow1}{\small \textsc{B.~Chow,}\ Deforming convex hypersurfaces by the $ n $th root of the Gaussian curvatur. \emph{J. Differential Geom.}, {\bf 22} (1985), 117-138.}


\bibitem{Chow2}{\small \textsc{B.~Chow,}\ Deforming convex hypersurfaces by the square root of the scalar curvature. \emph{Invent. Math.}, {\bf 87} (1987), 63-82.}





\bibitem{dphrsn}{\small \textsc{P.~Daskalopoulos, R.~ Hamilton, N.~Sesum,}\ Classification of compact ancient solutions to the curve shortening flow. \emph{J.  Differential Geom.}, {\bf 84} (2010), 455--464.}



\bibitem{dphrsn1}{\small \textsc{P.~Daskalopoulos, R.~ Hamilton, N.~Sesum,}\ Classification of ancient compact solutions to the Ricci flow on surfaces. \emph{J.  Differential Geom.}, {\bf 91} (2012), 171--214.}

\bibitem{Firey}{\small \textsc{W. J.~Firey,}\ Shapes of worn stones. \emph{Mathematika}, {\bf 21} (1974), 1-11.}


\bibitem{gh} {\small \textsc{M. E.~Gage, R. S.~Hamilton,}\ The heat equation shrinking convex plane curves. \emph{J. Differential Geom.}, {\bf 23} (1986), 69--96.}





\bibitem{GuanNi}{\small \textsc{P.~Guan, L.~Ni}\ Entropy and a convergence theorem for Gauss curvature flow in high dimension. \emph{J. Eur. Math. Soc.}, {\bf 19} (2017), 3735-3761.}

\bibitem{HamiltonRF0}{\small \textsc{R. S.~Hamilton,}\ Three-manifolds with positive Ricci curvature. \emph{J. Differential Geom.}, {\bf 17} (1982), 255-306.}


\bibitem{Huisken0}{\small \textsc{G.~Huisken,}\ Flow by mean curvature of convex surfaces into spheres. \emph{J. Differential Geom.}, {\bf 20} (1984), 237-266.}

\bibitem{Huisken1}{\small \textsc{G.~Huisken,}\ Asymptotic behavior for singularities of the mean curvature flow. \emph{J. Differential Geom.}, {\bf 31} (1990), 285-299.}

\bibitem{HuiskenSC}{\small \textsc{G.~Huisken, C.~Sinestrari,}\ Convex ancient solutions of the mean curvature flow. \emph{J. Differential Geom.}, {\bf 101} (2015), 267-287.}





\bibitem{is} {\small \textsc{M. N. ~Ivaki, A.~Stancu,}\ Volume preserving centro-affine normal flows. {\it Comm. Anal. Geom.}, {\bf 21} (2013), 671--685.}

\bibitem{iva0} {\small \textsc{M. N. ~Ivaki,}\ Centro-affine curvature flows on centrally symmetric convex curves. {\it Trans. Amer. Math. Soc.}, {\bf 366} (2014), 5671--5692.}

\bibitem{iva1} {\small \textsc{M. N. ~Ivaki,}\ Convex bodies with pinched Mahler volume under the centro-affine normal flows. {\it Calc. Var. Partial. Differential Equ.}, {\bf 54} (2015), 831--846.}


\bibitem{IvakiCPANF}{\small \textsc{M. N.~Ivaki,}\ Centro-affine normal flows on curves: Harnack estimates and ancient solutions. \emph{Ann. I. H. Poincare-An.}, {\bf 32} (2015), 1189-1197.}

\bibitem{IvakiACSF}{\small \textsc{M. N.~Ivaki,}\ Classification of compact convex ancient solutions of the planar affine normal flow. \emph{J. Geom. Anal.}, {\bf 26} (2016), 663-671.}

\bibitem{jyy}{\small \textsc{X. J.~Jiang, Y.~Yang, Y. H.~Yu,}\ An eternal curve flow in centro-affine geometry. \emph{J. Funct. Anal.}, {\bf 284} (2023), 109904.}

\bibitem{krylov}{\small \textsc{N. V.~Krylov,}\  \emph{Nonlinear Elliptic and Parabolic Equations of the Second Order}. Dordrecht, Holland: D. Reidel Publishing Company, 1987.}


\bibitem{safonov-krylov}{\small \textsc{N. V.~Krylov, M. V.~Safonov,}\ A certain property of solutions of parabolic equations with measurable coefficients. \emph{Math. USSR, Izv.}, {\bf 16} (1981), 151-164.}



\bibitem{lls}{\small \textsc{A. M.~Li, H. Z.~Li, U.~Simon,}\ Centroaffine Bernstein problems. \emph{Differential Geom. Appl.}, {\bf 20} (2004), 331--356.}

\bibitem{lsz}{\small \textsc{A. M.~Li,  U.~Simon, G. S.~Zhao,}\ Hypersurfaces with prescribed affine Gauss-Kronecker curvature. \emph{Geometriae Dedicata}, {\bf 81} (2000), 141--166.}

\bibitem{LiAM}{\small \textsc{A. M.~Li, U.~Simon, G.~Zhao, Z.~Hu,}\ \emph{Global affine differential geometry of hypersurfaces}. de Gruyter, 2015.}

\bibitem{LQR}{\small \textsc{Q. R.~Li, W.~Sheng, D.~Ye, C.~Yi,}\ A flow approach to the Musielak-Orlicz-Gauss image problem. \emph{Adv. Math.}, {\bf 403} (2022), 108379.}


\bibitem{lt} {\small \textsc{J. Loftin, M. Tsui,}\  Ancient solutions of the affine normal
flow. \emph{J. Differential Geom.}, {\bf 78} (2008), 113--162.}





\bibitem{ns}{\small \textsc{K.~Nomizu, T.~Sasaki,}\ \emph{Affine Differential Geometry: Geometry of Affine Immersions}. Cambridge University Press, 1994.}


\bibitem{olv-isf}{\small \textsc{P. J.~Olver,}\ Invariant submanifold flows. \emph{J. Phys. A: Math. Theor.}, {\bf 41} (2008), 344017.}





\bibitem{oqy}{\small \textsc{P. J.~Olver, C.Z.~Qu, Y.~Yang,}\ Feature matching and heat flow in centro-affine geometry. \emph{Symmetry Integrability Geom. Methods Appl.}, {\bf 16} (2020), 093.}

\bibitem{ost-1}{\small \textsc{P. J.~Olver, G.~Sapiro, A.~Tannenbaum,}\ \emph{Differential Invariant Signatures and Flows in Computer Vision:  A Symmetry Group Approach}. 255--306. Springer Netherlands, 1994.}


\bibitem{qy}{\small \textsc{C. Z.~Qu, Y.~Yang,}\ An invariant second-order curve flow in centro-affine geometry. \emph{J. Geom. Phys.}, {\bf 174}(2022), 104447.}



\bibitem{st}{\small \textsc{G.~Sapiro, A.~Tannenbaum,}\ On affine plane curve evolution. \emph{J. Funct. Anal.}, {\bf 119} (1994), 79--120.}


\bibitem{sim}{\small \textsc{U.~Simon},\  Affine Differential Geometry [Chapter 9]. In: \emph{Handbook of Differential Geometry}, Vol. 1, pp 905--961, North-Holland, Amsterdam, 2000. }

\bibitem{TsoK}{\small \textsc{K.~Tso,}\ Deforming a hypersurface by its Gauss-Kronecker curvature. \emph{Commun. Pure Appl. Math.}, {\bf 38} (1985), 867-882.}


\bibitem{jieu}{\small \textsc{J.~Urbas,}\ An expansion of convex hypersurfaces. \emph{J. Differential Geom.}, {\bf 33} (1991), 91-125.}


\bibitem{wang}{\small \textsc{C. Wang,}\ Centroaffine Minimal Hypersurfaces in $\R^{n+1}$. \emph{Geometriae Dedicata}, {\bf 51} (1994), 63--74.}

\bibitem{WhiteB}{\small \textsc{B.~White,}\ The nature of singularities in mean curvature flow of mean-convex sets. \emph{J. Amer. Math. Soc.}, {\bf 16} (2003), 123-138.}

\bibitem{HRHO}{\small \textsc{B.~White,}\ Ancient solutions of the mean curvature flow. \emph{Commun. Anal. Geom.}, {\bf 24} (2016), 593-604.}


\bibitem{wwq}{\small \textsc{W. F.~Wo, X. L.~Wang, C. Z.~Qu,}\ The centro-affine invariant geometric heat flow. \emph{Math. Z.}, {\bf 288} (2018), 311--331.}


\bibitem{yq}{\small \textsc{Y.~Yang, C. Z.~Qu,}\ Invariant hypersurface flows in centro-affine geometry. \emph{Sci. China Math.}, {\bf 64} (2021), 1715--1734.}

\bibitem{zgx}{\small \textsc{G. X.~Zhu,}\ The centro-affine Minkowski problem for polytope. \emph{J. Differential Geom.}, {\bf 101} (2015), 159-174.}

\end{thebibliography}
\end{document}